\newcommand{\Z}{{\mathbb Z}}
\newcommand{\F}{{\mathbb F}}
\newcommand{\Sq}[1]{\operatorname{Sq}^{#1}} 
\renewcommand{\P}[1]{\mathcal{P}^{#1}}
\newcommand{\order}{\operatorname{Order}\nolimits}
\newcommand{\Hom}{\operatorname{Hom}\nolimits}
\newcommand{\Id}{\operatorname{Id}\nolimits}
\newcommand{\GL}{\operatorname{GL}\nolimits}
\newcommand{\A}{\ifmmode{\mathcal{A}}\else${\mathcal{A}}$\fi}
\newcommand{\K}{\ifmmode{\mathcal{K}}\else${\mathcal{K}}$\fi}
\newcommand{\KM}{\ifmmode{\mathcal{M}}\else%
${\mathcal{M}}$\fi}
\newcommand{\U}{\ifmmode{\mathcal{U}}\else${\mathcal{U}}$\fi}
\newcommand{\M}{\ifmmode{\mathcal{M}}\else${\mathcal{M}}$\fi}
\newcommand{\N}{\ifmmode{\mathcal{N}}\else${\mathcal{M}}$\fi}
\newcommand{\Ff}{\ifmmode{\mathcal{F}}\else${\mathcal{F}}$\fi}
\newcommand{\Ll}{\ifmmode{\mathcal{L}}\else${\mathcal{L}}$\fi}
\newcommand{\X}{\ifmmode{\mathcal{X}}\else${\mathcal{X}}$\fi}
\newcommand{\Y}{\ifmmode{\mathcal{Y}}\else${\mathcal{Y}}$\fi}
\newtheorem{theorem}{Theorem}[section]
\newtheorem{proposition}[theorem]{Proposition}
\newtheorem{corollary}[theorem]{Corollary}
\newtheorem{lemma}[theorem]{Lemma}
\theoremstyle{definition}
\newtheorem{definition}[theorem]{Definition}
\newtheorem{remark}[theorem]{Remark}
\theoremstyle{remark}
\theoremstyle{plain}
\newcommand{\definicio}{\stackrel{\text{def}}{=}}
\title[Cohomology, Massey products and the MIP]{Cohomological uniqueness, Massey products and the modular isomorphism problem for $2$-groups of maximal nilpotency class}
\author{Albert Ruiz}
\address[A.~Ruiz]{Departament de Matem{\`a}tiques,
Universitat Aut{\`o}noma de Barcelona, 08193 Cerdanyola del
Vall{\`e}s, Spain.}
\email[A.~Ruiz]{Albert.Ruiz@uab.cat}
\author{Antonio Viruel}
\address[A.~Viruel]{
Departamento de {\'A}lgebra, Geometr{\'\i}a y Topolog{\'\i}a,
Universidad de M{\'a}\-la\-ga, Apdo correos 59, 29080 M{\'a}laga,
Spain.}
\email[A.~Viruel]{viruel@agt.cie.uma.es}
\thanks{
\textbf{Key words:} 2000 Mathematics subject classification 55R35,
20D15.\\ \indent First author is partially supported by FEDER-MEC
grant MTM2010-20692.\\ \indent Second author is partially
supported by FEDER-MEC grant MTM2010-18089 and Junta de
Andaluc{\'\i}a grants FQM-213 and P07-FQM-2863.\\ \indent Both
authors are partially supported by Generalitat de Catalunya grant
2009SGR-1092.}
\begin{document}

\begin{abstract}
Let $G$ be a finite $2$-group of maximal nilpotency class, and let $BG$ be its classifying space.
We prove that iterated Massey products in $H^*(BG;\F_2)$ do characterize the homotopy type of $BG$ among $2$-complete spaces with
the same cohomological structure. As a consequence we get an alternative proof of the modular isomorphism problem for $2$-groups of maximal nilpotency class.
\end{abstract}

\maketitle

%%%%%%%%%%%%%%%%%%%%%%%%%%%%%%%%%%%%%%%%%%%%%%%%%%%%%%%%
% 1.- Introduction
%%%%%%%%%%%%%%%%%%%%%%%%%%%%%%%%%%%%%%%%%%%%%%%%%%%%%%%%

\section{Introduction}

Let $G$ be a finite $p$-group, and let $BG$ be its classifying
space. In this work we consider the cohomological uniqueness of
$BG$: choose some mod-$p$ cohomological invariants and consider
$\X$ a $p$-complete space such that $BG$ and $\X$ agree on these
cohomological invariants, does it mean that $\X$ is homotopy
equivalent to $BG$? As one may expect, the answer to this question
depends on the mod-$p$ cohomological invariants chosen.

In \cite{BL}, C.~Broto and R.~Levi initiated the study of the
cohomological uniqueness of $BG$ in terms of Steenrod operations
and Bockstein spectral sequences. In this setting, they proved the
cohomological uniqueness of the classifying space of every
dihedral $2$-groups \cite{BL}, and every quaternion group
\cite{BL2}. Unfortunately, the available techniques seem not to be
strong enough to decide the cohomological uniqueness of the
classifying space of semidihedral $2$-groups in terms of Steenrod
operations and Bockstein spectral sequences.

In order to give a unified approach to the cohomological uniqueness of 
$BG$, when $G$ is a finite $2$-group of maximal nilpotency class 
(i.e.\ dihedral, quaternion and semidihedral $2$-groups), we propose a 
different set of mod-$p$ cohomology invariants:
the algebra structure of $H^*(BG;\F_p)$ (not taking into account Steenrod operations), 
and iterated Massey products in this algebra (see Section \ref{sec:def-massey} for precise definitions). 
In this setting we prove:

\begin{theorem}\label{main}
Let $G$ be a finite $2$-group of maximal nilpotency class. Let
$\X$ be a $2$-complete topological space having the homotopy type of a $CW$-complex such that
$H^*(\X;\F_2) \cong H^*(BG;\F_2)$ as algebras with iterated Massey products. Then
$\X \simeq BG$.
\end{theorem}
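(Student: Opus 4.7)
The plan is to produce a map $f \colon BG \to \X$ inducing an isomorphism on mod-$2$ cohomology: since both spaces are $2$-complete and have the homotopy type of CW-complexes, such an $f$ is automatically a homotopy equivalence by Bousfield-Kan.

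The first step is to translate the algebraic hypothesis into topologically usable structure. By Kadeishvili's theorem, iterated Massey products encode the $A_\infty$-refinement of the cochain algebra, and in particular they determine the higher Bockstein operations on $H^*(\X;\F_2)$, hence the entire mod-$2$ Bockstein spectral sequence of $\X$. Combined with the algebra isomorphism, this brings us close to the hypotheses of Broto-Levi \cite{BL,BL2}, whose techniques settle the dihedral and quaternion cases directly once one has algebra plus Bockstein data. For those two families the theorem therefore reduces to their work, after verifying that the handful of Steenrod operations invoked in \cite{BL,BL2} are themselves recoverable from Massey products in the relevant low-degree range.

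The heart of the argument is the semidihedral case $G = SD_{2^n}$, which is not covered by \cite{BL,BL2}. I would proceed by subgroup decomposition: first locate in $H^*(\X;\F_2)$ the characteristic classes corresponding to the maximal proper subgroups of $SD_{2^n}$ (which are themselves dihedral, quaternion, or cyclic, and hence already handled), then invoke those lower-rank cases together with Lannes' $T$-functor for the elementary abelian pieces in order to realize actual maps $BH \to \X$ for each such $H$, and finally assemble them into a map $BSD_{2^n} \to \X$ through the homotopy pushout description of $BSD_{2^n}$ over its subgroup poset. The Massey product hypothesis intervenes at the gluing stage: the secondary obstructions to assembly live in cohomology groups in which triple Massey products on $H^*(\X;\F_2)$ act as differentials, and matching these with the corresponding products on $H^*(BSD_{2^n};\F_2)$ forces the obstructions to vanish.

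The main obstacle is precisely this final assembly: one must perform an explicit Massey product computation in $H^*(BSD_{2^n};\F_2)$ and verify that the relevant triple (and perhaps quadruple) brackets are nonzero with sufficiently small indeterminacy to rigidify the gluing data. This is the exact step at which primary Steenrod operations alone are insufficient, which is why \cite{BL,BL2} could not reach the semidihedral case, and it is where the Massey product hypothesis pays its dividend; the computation is tractable via a small projective resolution of $\F_2$ over $\F_2[SD_{2^n}]$ but is the technical core of the paper. Once it is in hand, the obstructions vanish, the assembled map $f \colon BG \to \X$ is a mod-$2$ cohomology isomorphism by construction, and $2$-completeness yields $\X \simeq BG$.
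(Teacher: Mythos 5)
Your proposal runs in the wrong direction and rests on machinery that the hypotheses do not supply. The paper constructs a map $\phi\colon \X\to BG$, not $BG\to\X$: maps out of $\X$ into classifying spaces of $2$-groups can be built by obstruction theory, lifting a map $\X\to BD_4$ (given by the degree-one classes) up the tower of principal fibrations $\cdots\to BD_{2^k}\to BD_{2^{k-1}}\to\cdots$, where each lift is controlled by the vanishing of a single class in $H^2(\X)$. There is no analogous mechanism for mapping \emph{into} the unknown space $\X$. In particular your assembly step fails twice over: Lannes' $T$-functor produces maps $BV\to\X$ only from data about $H^*(\X)$ as an unstable algebra over the Steenrod algebra, and the hypothesis gives you only the $\F_2$-algebra structure with Massey products (the paper is explicit that the isomorphism is not assumed to commute with Steenrod operations); and a $2$-group such as $SD_{2^n}$ admits no useful mod-$2$ homotopy pushout decomposition over its proper subgroup poset, so there is nothing to glue.

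The reductions you propose are also unjustified. Massey products do not determine the $A_\infty$-structure (Kadeishvili's theorem goes the other way, and the sets $\langle X_1,\dots,X_n\rangle$ with their indeterminacy carry strictly less information than an $A_\infty$-quasi-isomorphism), so you cannot conclude that the Bockstein spectral sequence of $\X$, let alone the Steenrod operations needed for the Broto--Levi arguments, are recovered; this is precisely why the paper re-proves the dihedral and quaternion cases from scratch rather than citing \cite{BL,BL2}. The actual engine of the proof is Dwyer's correspondence between defining systems for $n$-fold products of degree-one classes and homomorphisms $G\to\overline{U}(\F_2,n+1)$, combined with the elementary fact that an element of order $2^n$ in $\GL_m(\F_2)$ forces $m>2^{n-1}$: this makes the length of the longest defined product $\langle X,X+Y,X,X+Y,\dots\rangle$ detect the order of the group, which is what stops the lifting process at exactly the right stage. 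The higher-degree products $\langle Y,Y^2,Y,Y^2\rangle$ and $\langle X,X^2,Y\rangle$, $\langle X,X^2,X,X^2\rangle$, computed via the Yoneda cocomplex, are then needed only to show the degree $3$ and $4$ generators of $H^*(BQ_{2^n})$ and $H^*(BSD_{2^n})$ lie in the image of $\phi^*$. None of these ingredients appears in your outline, and the steps you substitute for them do not go through.
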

\begin{proof}
We first describe $H^*(BG;\F_2)$ as algebra with iterated Massey products in Section \ref{sec:calculo-massey}. This structure is used to construct a homotopy equivalence $\X\to BG$ whenever $G$ is dihedral (Theorem \ref{theoremD}), quaternion (Theorem \ref{theoremQ}) or
semidihedral (Theorem \ref{theoremSD}).
\end{proof}

Besides of its own topological interest, the study of the
cohomological uniqueness of classifying spaces may have
implications in a long standing algebraic problem: the modular
isomorphism problem.

The modular isomorphism problem asks whether a $p$-group $G$ is determined by  $\F_p[G]$, its group algebra over the field of $p$-elements. That is, given another finite $p$-group $H$ such that $\F_p[G]\cong\F_p[H]$ as rings, does it mean that $G\cong H$ as groups? The general question still remains open, a positive answer to this question is known for a few families of $p$-groups (see  \cite[Introduction]{baginski-kolanov} for an up to date list of results).

Then the interplay between cohomological uniqueness of classifying
spaces and the modular isomorphism problem is clear: let $G$ be a
finite $p$-group such that $BG$ is cohomology unique when
considered a set of mod-$p$ cohomological invariants that can be
deduced from the group algebra $\F_p[G]$, then the modular
isomorphism problem has a positive answer for $G$.

So we obtain a positive answer to the modular isomorphism problem for finite $2$-groups of maximal nilpotency class (see \cite{Carlson}, and \cite{Baginski-maxclass2} for other approaches to this result):

\begin{corollary}
Let $G$ be a finite $2$-group of maximal nilpotency class, and let $H$ be a finite $2$-group such that $\F_2[G]\cong\F_2[H]$ as rings, then $G\cong H$ as groups.
\end{corollary}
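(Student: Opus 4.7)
The plan is to apply Theorem \ref{main} with $\X := BH$ (which will be shown to be $2$-complete), and then recover the group isomorphism $G\cong H$ from the resulting homotopy equivalence. Two things need to be verified: that $BH$ satisfies the hypotheses of Theorem \ref{main}, and that the cohomological invariants appearing there can be read off from the group algebra $\F_2[G]$ alone.

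Since $H$ is a finite $2$-group, $BH$ has the homotopy type of a $CW$-complex and is $\F_2$-complete in the sense of Bousfield--Kan, so that $BH \simeq BH_2^\wedge$. Moreover $\pi_1(BH) \cong H$, so any homotopy equivalence $BH \simeq BG$ would immediately yield $H \cong G$. Hence the corollary reduces to producing an isomorphism $H^*(BH;\F_2) \cong H^*(BG;\F_2)$ of $\F_2$-algebras, compatible with iterated Massey products, from the ring isomorphism $\F_2[G] \cong \F_2[H]$.

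The key step is this cohomological translation. Because $G$ and $H$ are finite $2$-groups, the group algebras $\F_2[G]$ and $\F_2[H]$ are local rings whose unique maximal ideals are their respective augmentation ideals. Any ring isomorphism $\F_2[G] \cong \F_2[H]$ must therefore carry one augmentation ideal onto the other, and hence induces an isomorphism of augmented $\F_2$-algebras. Now $H^*(BG;\F_2)$ may be computed as $\Ext^*_{\F_2[G]}(\F_2,\F_2)$ with the Yoneda product, and iterated Massey products (more generally, the full $A_\infty$-structure) are intrinsically determined by this augmented algebra via any projective resolution of the trivial module. Thus the isomorphism $\F_2[G] \cong \F_2[H]$ induces an isomorphism of cohomology algebras that preserves all iterated Massey products.

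With this in place, $\X = BH$ satisfies the hypotheses of Theorem \ref{main}, which yields a homotopy equivalence $BH \simeq BG$; passing to fundamental groups then gives $H \cong G$. The hardest part of the argument, as I see it, is spelling out carefully why the iterated Massey products on $\Ext^*_{\F_2[G]}(\F_2,\F_2)$ depend only on $\F_2[G]$ as an augmented $\F_2$-algebra; this is a standard functoriality statement about $\Ext$-invariants, but deserves explicit discussion so that the passage from $\F_2[G] \cong \F_2[H]$ to matching Massey products is completely transparent before invoking Theorem \ref{main}.
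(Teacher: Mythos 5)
Your proposal is correct and follows essentially the same route as the paper: recover $H^*(B-;\F_2)$ with its iterated Massey products from the group algebra via the Yoneda/Ext description, then invoke Theorem \ref{main} and pass to fundamental groups. The only difference is that you spell out the augmentation-ideal (local ring) argument explicitly, whereas the paper delegates that translation step to the cited results of Borge.
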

\begin{proof}
Given any finite $p$-group $K$, the algebra structure with iterated Massey products of $H^*(BK;\F_p)$ can be obtained from the ring structure of $\F_p[G]$ by means of the Yoneda cocomplex (e.g. see \cite[Theorem 2.3 and Lemma 2.4]{Borge-JPAA}). Hence, if $\F_2[G]\cong\F_2[H]$ as rings, then $H^*(BH;\F_2) \cong H^*(BG;\F_2)$ as algebras with iterated Massey products. Then, according to Theorem \ref{main}, $BG\simeq BH$ (recall that the classifying space of a finite $p$-group is a $p$-complete space), and $G\cong H$ as groups.
\end{proof}

We finish this introduction recalling that iterated Massey products of degree one classes have been previously used by I.C.~Borge in \cite{Borge-JPAA} and \cite{Borge-Thesis} to provide a cohomological classification of finite $p$-groups. Unfortunately, her results cannot be applied in our topological framework: in Remark \ref{remark:higherneeded}, we give an example showing that  iterated Massey products of degree one classes cannot isolate
the homotopy type of the classifying space of quaternion groups.

\noindent \emph{Organization of the paper:} In Section 2, we briefly review the definition, main
properties and tools needed in the computation of iterated Massey products. Section 3 is devoted
to list and give some properties of maximal nilpotency class $2$-groups. In Section 4,
we compute iterated Massey products in the cohomology of these groups and in Section 5, we use
this structure to prove Theorem \ref{main} case by case. There are also two appendices where we
develop some explicit computations needed in Section 4:
Appendix A provides us with explicit representations of maximal nipotency class $2$-gropus in 
$\GL_m(\F_2)$ for minimal $m$ and
Appendix B is devoted to express the generators in the cohomology of quaternion and semidihedral groups
as cochain morphisms in the Yoneda cocomplex. 

\noindent \emph{Notation:} In general the following notations will be used in
the rest of the paper: group elements are denoted with lower case letters ($x$,$y$, \dots), while roman capital letters ($X$, $Y$, \dots) and calligraphic capital letters ($\X$ and $\Y$) are used to denote cohomology generators,
and topological spaces respectively.

Unless otherwise stated, cohomology means cohomology with trivial coefficients over the field $\F_2$, so
$H^*(\X)=H^*(\X;\F_2)$. This also implies that when we talk about $2$-complete spaces
we are considering the $2$-completion in the sense of A.K.~Bousfield and D.~Kan \cite{BK}.

%%%%%%%%%%%%%%%%%%%%%%%%%%%%%%%%%%
% 2.- Iterated Massey products: definition and properties
%%%%%%%%%%%%%%%%%%%%%%%%%%%%%%%%%%

\section{Iterated Massey products: definition and properties}\label{sec:def-massey}
In this section we quickly review the theory of iterated Massey products.
A more detailed description can be found in \cite{Kraines}, or \cite{May}.

Let $R$ be a ring with unit. Consider $C^*(G,R)$ the cochain algebra of the group $G$ with
coefficients in $R$, and $d$ the coboundary operator in $C^*(G,R)$.

\begin{definition}
Let $\{X_i\}_{1\leq i \leq n}$ be homogeneous elements in $H^{*}(G,R)$.
A \emph{defining system for the $n$-fold iterated Massey product
$\langle X_1, \dots , X_n\rangle$},
is an
upper triangular matrix
$$
M=\{m_{i,j} \mid 1\leq i\leq n+1, i<j\leq n+1, (i,j) \neq (1,n+1) \}
$$
with coefficients in $C^*(G,R)$ such that:
\begin{enumerate}[(i)]
 \item $m_{i,i+1}$ is a representative for $X_i$ and
 \item $dm_{i,j}=\displaystyle\sum_{k=i+1}^{j-1} m_{i,k} \cup m_{k,j}$ ($j\neq i+1)$.
\end{enumerate}
\end{definition}

\begin{definition}
Given $M$ a defining system for $\langle X_1, \dots, X_n\rangle$, the
\emph{value of the product relative to this defining system}, denoted
$\langle X_1, \dots, X_n\rangle_M$, is the element in $H^*(G;R)$ represented
by the cocycle:
$$
\sum_{k=2}^n m_{1,k}\cup m_{k,n+1} .
$$
The \emph{$n$-fold iterated Massey product $\langle X_1, \dots, X_n\rangle$} is defined as the set of elements
which can be written as $\langle X_1, \dots, X_n\rangle_M$ for some defining
system $M$. The \emph{indeterminacy in the iterated Massey product $\langle X_1, \dots, X_n\rangle$} is defined
as the set of elements $Z$ which can be expressed as $Z=Y_1-Y_2$ for $Y_1$ and $Y_2$ in $\langle X_1, \dots, X_n\rangle$.
\end{definition}
We now enumerate some of the properties which are used later:
\begin{enumerate}[(i)]
\item The iterated Massey product $\langle X_1, \dots, X_n\rangle$ is not defined for all
$X_1, \dots, X_n$ in $H^*(G;R)$. For example, $\langle X_1,X_2,X_3\rangle$
is defined if and only if $X_1\cup X_2=X_2\cup X_3=0$.
\item The degree of an element in $\langle X_1, \dots, X_n\rangle$ is $\sum \deg(X_i)-n+2$.
\item If $f\colon \X \to \Y$ is a continuous map of topological spaces, and $Y_1, \dots, Y_r$
are cohomology classes in $H^*(\Y;R)$ such that $\langle Y_1, \dots,$  $Y_r \rangle$ is defined,
then so is $\langle f^*(Y_1),\dots, f^*(Y_r)\rangle$ and
$$f^*(\langle Y_1, \dots, Y_r \rangle)\subset \langle f^*(Y_1),\dots,
f^*(Y_r)\rangle.$$
Moreover, if $f^*$ is an isomorphism, equality holds.
\end{enumerate}

The next result follows from May's proof of \cite[Theorem 1.5]{May}:
\begin{lemma}\label{lemmaMay}
Let $f\colon \X \to \Y$ be a continuous map such that $f^k \colon
H^k(\Y;R)
\to H^k(\X;R)$ is an isomorphism for $k \leq n$. Let $Y_1, \dots, Y_r$ be elements
in $H^*(\Y;R)$ such that $\sum \deg(Y_i) -r +2  \leq n$. Then:
\begin{enumerate}[\rm (a)]
\item  $\langle Y_1, \dots , Y_r\rangle$ is defined if and only if $\langle f^*(Y_1), \dots,
f^*(Y_r)\rangle$ is so.
\item $f^*(\langle Y_1, \dots , Y_r\rangle)=\langle f^*(Y_1), \dots, f^*(Y_r)\rangle$.
\end{enumerate}
\end{lemma}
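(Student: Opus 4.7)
The plan is to follow May's strategy in the proof of \cite[Theorem 1.5]{May} while carefully tracking degrees. The essential numerical observation is that in any defining system $M=\{m_{i,j}\}$ for $\langle Y_1,\dots,Y_r\rangle$, the cochain $m_{i,j}$ has degree $\sum_{l=i}^{j-1}\deg(Y_l)-(j-i)+1$, and each cocycle $c_{i,j}=\sum_{k=i+1}^{j-1}m_{i,k}\cup m_{k,j}$ that must be exhibited as a coboundary along the way has degree at most $\sum_{l=1}^{r}\deg(Y_l)-r+2\leq n$. Consequently every cohomological obstruction or identification encountered during the construction lives in a degree where $f^{*}$ is an isomorphism, which is what allows the whole inductive construction to be transferred between $C^{*}(\X;R)$ and $C^{*}(\Y;R)$.

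For part (a), the forward implication is immediate: the induced cochain map $f^{\#}$ commutes with $d$ and cup product, so applying it entry-wise to a defining system $M$ in $C^{*}(\Y;R)$ produces a defining system $\{f^{\#}m_{i,j}\}$ in $C^{*}(\X;R)$. The converse I would prove by induction on $j-i$, producing a defining system $M=\{m_{i,j}\}$ in $C^{*}(\Y;R)$ that lifts a given $M'=\{m'_{i,j}\}$ in $C^{*}(\X;R)$. One begins by choosing cocycle representatives $m_{i,i+1}$ for $Y_i$ in $C^{*}(\Y;R)$, adjusted by coboundaries so that $f^{\#}m_{i,i+1}$ is cohomologous to $m'_{i,i+1}$. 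At the inductive step the cocycle $c_{i,j}$ is of degree $\leq n$, and its image under $f^{*}$ agrees, up to the previously arranged identifications, with the class of $dm'_{i,j}$, which vanishes; hence $c_{i,j}=dm_{i,j}$ for some $m_{i,j}$ in $C^{*}(\Y;R)$, completing the step.

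Part (b) is deduced from the same procedure. The inclusion $f^{*}(\langle Y_1,\dots,Y_r\rangle)\subseteq\langle f^{*}Y_1,\dots,f^{*}Y_r\rangle$ is naturality (property (iii) listed just before the statement). For the reverse inclusion, given $Z\in\langle f^{*}Y_1,\dots,f^{*}Y_r\rangle$ realized by some $M'$, the lift $M$ produced above satisfies $f^{*}(\langle Y_1,\dots,Y_r\rangle_M)=Z$, because the value cocycle $\sum_{k=2}^{r}m_{1,k}\cup m_{k,r+1}$ has degree $\sum\deg(Y_l)-r+2\leq n$ and is, by the bookkeeping built into the lift, cohomologous to the $f^{\#}$-image of the analogous cocycle for $M'$. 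The principal obstacle is precisely this inductive bookkeeping: at each stage one must use the freedom to modify the lifts $m_{i,j}$ by exact cochains both to satisfy $dm_{i,j}=c_{i,j}$ and to preserve the running identification $f^{\#}m_{i,j}\sim m'_{i,j}$. This is the delicate step of May's original argument, and the hypothesis $\sum\deg(Y_i)-r+2\leq n$ is exactly what guarantees that every appeal to the isomorphism $f^{*}$ falls in the permitted range $k\leq n$.
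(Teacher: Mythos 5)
Your proposal is correct and matches the paper's treatment: the paper gives no independent argument but simply observes that the lemma ``follows from May's proof of \cite[Theorem 1.5]{May}'', and your sketch is precisely that argument, with the added (correct) degree bookkeeping showing that every obstruction cocycle $c_{i,j}$ and every adjustment of an $m_{i,j}$ lives in a degree $\leq n$ where $f^{*}$ is an isomorphism (using that each $\deg(Y_l)\geq 1$). The homotopy-of-defining-systems bookkeeping you flag as delicate is exactly the content of May's proof being invoked, so nothing further is needed.
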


Let $p$ be a fixed prime. In this work, we compare the mod-$p$ cohomology algebras (with iterated Massey products) of spaces. This is done by considering a special kind of morphisms:

\begin{definition}
Let $\varphi\colon H^*(\Y;\F_p) \to H^*(\X;\F_p)$ be a morphism (not necessarily induced by a continuous
map of topological spaces). We say that $\varphi$ is an \textit{$\M$-isomorphism} if
\begin{enumerate}
\item $\varphi$ is an $\F_p$-algebras isomorphism and
\item for all $r \geq 1$ and $Y_1$, \dots , $Y_r$ elements in $H^*(\Y;\F_p)$ such
that $\langle Y_1, \dots , Y_r \rangle$ is defined, then $\langle \varphi(Y_1),
\dots \varphi(Y_r) \rangle$ is defined and
$$
\varphi(\langle Y_1, \dots , Y_r \rangle)=
\langle \varphi(Y_1), \dots \varphi(Y_r) \rangle
$$
\end{enumerate}
\end{definition}

\begin{definition}
Let $\X$ and $\Y$ be topological spaces. We say that $\X$ and $\Y$ are
\textit{$\M$-comparable} if there exists an
$\M$-isomorphism $\varphi \colon H^*(\Y;\F_p) \to H^*(\X;\F_p)$.
We say that $\X$ \textit{is determined by its $\M$-cohomology} if for any
$p$-complete space $\Y$ having the homotopy type of a $CW$-complex and such
that it is $\M$-comparable to $\X$ we have that $\X \simeq \Y$.
\end{definition}

The easiest examples of spaces determined by its $\M$-cohomology are provided by classifying spaces of some particular families of $p$-groups:

\begin{proposition}\label{example:elementaryabelian}
Let $E$ be an elementary abelian $p$-group, then $BE$ is determined
by its $\M$-cohomology.
\end{proposition}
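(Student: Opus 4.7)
The strategy is to use the degree-one cohomology classes to build a map $\Y \to BE$ directly, check it induces an $\F_p$-cohomology isomorphism, and then conclude via $p$-completeness. Write $E = (\F_p)^n$, so $BE \simeq K(\F_p,1)^n$. Let $\varphi\colon H^*(\Y;\F_p) \to H^*(BE;\F_p)$ be the $\M$-isomorphism witnessing $\M$-comparability. A short inductive argument on the length of the defining system shows that the inverse of an $\M$-isomorphism is again an $\M$-isomorphism, so $\psi := \varphi^{-1}\colon H^*(BE;\F_p) \to H^*(\Y;\F_p)$ is an $\M$-isomorphism as well. If $x_1, \dots, x_n \in H^1(BE;\F_p)$ is the standard basis, then via $H^1(\Y;\F_p) \cong [\Y, K(\F_p,1)]$ the classes $\psi(x_i)$ assemble into a map
$$
f\colon \Y \longrightarrow K(\F_p,1)^n = BE
$$
satisfying $f^*(x_i) = \psi(x_i)$.

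Next I would show that $f^*$ is an isomorphism. When $p = 2$ the cohomology $H^*(BE;\F_2) = \F_2[x_1, \dots, x_n]$ is polynomial on degree-one generators, so the ring maps $f^*$ and $\psi$ agree on generators and hence coincide, which immediately yields that $f^*$ is an isomorphism. When $p$ is odd, $H^*(BE;\F_p) = \F_p[y_1, \dots, y_n] \otimes \Lambda(x_1, \dots, x_n)$ with $y_i = \beta(x_i) \in H^2$. By construction $f^*(x_i) = \psi(x_i)$, and naturality of the Bockstein gives $f^*(y_i) = \beta(\psi(x_i))$. To compare this with $\psi(y_i)$, I invoke the Kraines formula $\beta(x_i) \in -\langle x_i, \dots, x_i\rangle$ (with $p$ copies): since $\psi$ is an $\M$-isomorphism we get $\psi(y_i) \in -\langle \psi(x_i), \dots, \psi(x_i)\rangle$, while Kraines' formula applied in $\Y$ places $f^*(y_i) = \beta(\psi(x_i))$ in the same coset. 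Hence $\psi(y_i) - f^*(y_i)$ lies in the indeterminacy of the $p$-fold Massey product, which is contained in the ideal generated by $\psi(x_i)$ and in degree two reduces to $\psi(x_i)\cdot H^1(\Y;\F_p)$, a subspace of the decomposables $\psi(x_i)\psi(x_j) = f^*(x_ix_j)$ already in the image of $f^*$. Therefore $\psi(y_i) \in \mathrm{image}(f^*)$, and combined with the decomposables this proves $f^*$ is surjective on $H^2(\Y;\F_p)$. Since $H^*(BE;\F_p)$ is generated as an algebra by classes in degrees at most two, $f^*$ is surjective throughout; a graded dimension count (using the isomorphism $\psi$) then upgrades surjectivity to bijectivity in every degree.

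Finally $f\colon \Y \to BE$ is a mod-$p$ cohomology equivalence between $p$-complete spaces of the homotopy type of a $CW$-complex, so by Bousfield-Kan theory $f$ is a weak equivalence and $\Y \simeq BE$. The main technical point is the odd-prime case, specifically the claim that the indeterminacy of the $p$-fold Massey product $\langle x, \dots, x\rangle$ sits inside the principal ideal generated by $x$. This rests on the standard description of higher Massey product indeterminacies as sums of lower-order Massey products one of whose inputs is $x$, which inductively keeps every correction term inside $(x)$; an alternative, more bare-hands route is to exhibit the shift between two defining systems as an explicit cocycle that factors through multiplication by $x$.
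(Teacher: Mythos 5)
Your proof is correct, but it takes a genuinely different route from the paper's. The paper never constructs the comparison map at this stage: it argues that the $\M$-structure of $H^*(BE;\F_p)$ already determines the action of the Steenrod algebra and the Bockstein spectral sequence (for $p=2$ unstability forces $\Sq{1}X_i=X_i^2$; for odd $p$ unstability pins down the $\P{1}$'s and \cite[Theorem 19]{Kraines} identifies the $p$-fold Massey power $\langle U_i,\dots,U_i\rangle$ with $\beta_1(U_i)$), and then delegates the conclusion to the cohomological uniqueness result \cite[Proposition 1.5]{BL}. You instead build $f\colon\Y\to BE\simeq K(\F_p,1)^n$ directly from the degree-one classes and check by hand that $f^*$ is an isomorphism; this is self-contained modulo Kraines and Bousfield--Kan, and buys independence from the external Broto--Levi input, at the cost of redoing a little of what that proposition encapsulates. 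Both arguments ultimately rest on the same key fact for odd $p$, namely Kraines' identification of the Bockstein with the $p$-fold Massey power. Two inessential points in your write-up could be streamlined: you do not need the (unproved, and not entirely obvious, since the ``is defined'' clause in the definition of $\M$-isomorphism is one-directional) claim that $\varphi^{-1}$ is again an $\M$-isomorphism --- Kraines' theorem applied to $\Y$ shows $\langle\psi(x_i),\dots,\psi(x_i)\rangle$ is defined there with value $\beta(\psi(x_i))$ up to sign, and applying the given $\varphi$ to it yields $\langle x_i,\dots,x_i\rangle$ and hence $\psi(y_i)=\beta(\psi(x_i))=f^*(y_i)$ directly; for the same reason the discussion of the indeterminacy is vacuous, since the strictly defined $p$-fold power of an odd-degree class has zero indeterminacy.
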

\begin{proof}
Let $E$ be a rank $r$ elementary abelian $p$-group.  First we prove the algebra structure of $H^*(BE;\F_p)$ with iterated Massey products determines the Steenrod operations and the Bockstein spectral sequence. We consider two cases:

\begin{itemize}
\item If $p=2$, then $H^*(BE;\F_2)=\F_2[X_1,\ldots, X_r]$ where every $X_i$ has degree $1$. 
Therefore, the unstability axiom forces $\Sq{1}X_i=X_i^2$, and this determines completely the Steenrod 
operations and the Bockstein spectral sequence.
\item If $p>2$, then $H^*(BE;\F_2)=\Lambda(U_1,\ldots,U_r)\otimes\F_2[V_1,\ldots, V_r]$ where every 
$U_i$ has degree $1$, and every $V_i$ has degree $2$. 
Again, the unstability axiom forces $\P{1}U_i=0$ and $\P{1}V_i=V_i^p$, and this determines completely 
the Steenrod operations. 
Moreover, according to \cite[Theorem 19]{Kraines}, the $p$-fold iterated Massey product 
$\langle U_i,U_i,\ldots,U_i\rangle$ equals $\beta_1(U_i)=V_i$,
where
$\beta_1$ is the primary Bockstein operator. So, in this case, Massey products completely determine the Bockstein spectral 
sequence.
\end{itemize}

Finally, $BE$ is determined by its cohomology with Steenrod operations and Bockstein spectral sequence \cite[Proposition 1.5]{BL}, thus the result follows.
\end{proof}

\subsection{Iterated Massey products of degree one elements}
In this subsection we recall the work of W.G. Dwyer \cite[Section 2]{Dwyer2}, that relates iterated Massey products of degree one elements in the cohomology of a group and representations of this group in the upper triangular
matrices.

Let $U(R,n)$ be the multiplicative group of all upper triangular $n\times n$ matrices over $R$ which agree with the identity matrix along the diagonal.
The subgroup $Z(R,n)$ of $U(R,n)$ consists of matrices which are identically zero except along the diagonal and at position $(1,n)$. We get that $Z(R,n)\cong R$ and it is contained in the center of
$U(R,n)$, so it gives rise to the central extension:
\begin{equation}\label{extcentral}
Z(R,n) \to U(R,n) \to \overline{U}(R,n) \definicio U(R,n)/Z(R,n) .
\end{equation}
Given a group homomorphism $\phi \colon G \to U(R,n)$, the image of $g\in G$ is a matrix with
coefficients $\phi_{i,j}(g) \in R$. Remark that the elements $\phi_{i,i+1}(g)$ satisfy the
equation:
$$
\phi_{i,i+1}(g_1g_2)=\phi_{i,i+1}(g_1)+\phi_{i,i+1}(g_2) .
$$
So $\phi_{i,i+1}$ are group morphisms from $G$ to $R$, and thus represent cohomology classes
in $H^1(G;R)$. These elements are called \emph{near diagonal elements} of $\phi$.

\begin{theorem}[\cite{Dwyer2}]\label{theoremdwyer}
Let $X_1, \dots, X_ n$ be elements in $H^1(G;R)$. There is a one-one correspondence
$M\leftrightarrow\phi_M$ between defining systems $M$ for $\langle X_1, \dots, X_n\rangle$
and group homomorphisms $\phi_M\colon G \to \overline{U}(R,n+1)$ which have $-X_1, \dots, -X_n$
as near-diagonal components. Moreover, given $\phi_M$, we can pull back the extension in Equation
\eqref{extcentral}, getting an extension of $G$ by $R$. Then $\langle X_1, \dots, X_n\rangle_M$
is exactly the characteristic class of this extension.
\end{theorem}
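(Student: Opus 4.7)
The plan is to unpack a group homomorphism $\phi\colon G \to \overline{U}(R, n+1)$ as a tuple of matrix-entry functions $\phi_{i,j}\colon G \to R$, where $1\leq i < j \leq n+1$ and $(i,j)\neq(1,n+1)$ (the omitted entry being exactly the coordinate killed by the quotient by $Z(R,n+1)$), and then to read off the defining-system axioms directly from the matrix multiplication rule in $U(R,n+1)$.

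Concretely, I would expand the product rule for upper triangular matrices to get
\begin{equation*}
\phi_{i,j}(g_1g_2)=\phi_{i,j}(g_1)+\phi_{i,j}(g_2)+\sum_{k=i+1}^{j-1}\phi_{i,k}(g_1)\,\phi_{k,j}(g_2),
\end{equation*}
which, with the convention $(df)(g_1,g_2)=f(g_1)+f(g_2)-f(g_1g_2)$ and $(f\cup h)(g_1,g_2)=f(g_1)h(g_2)$, translates into the cocycle identity $d(-\phi_{i,j})=\sum_{k=i+1}^{j-1}(-\phi_{i,k})\cup(-\phi_{k,j})$ in $C^*(G;R)$. Hence $m_{i,j}\definicio-\phi_{i,j}$ yields a defining system for $\langle X_1,\dots,X_n\rangle$ whose near-diagonal entries represent $X_1,\dots,X_n$, which forces $\phi$ itself to have near-diagonal components $-X_1,\dots,-X_n$. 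The inverse construction simply packages a defining system back into the matrix entries of a map to $\overline{U}(R,n+1)$; verifying that these two assignments are mutually inverse establishes the bijection $M\leftrightarrow\phi_M$.

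For the statement about characteristic classes, I would choose the set-theoretic lift $\tilde\phi\colon G\to U(R,n+1)$ of $\phi_M$ whose entries coincide with those of $\phi_M$ except that the $(1,n+1)$-coordinate is set to $0$. Since $Z(R,n+1)\cong R$ sits centrally at the $(1,n+1)$-position, the factor set of the pulled-back extension is the $(1,n+1)$-entry of $\tilde\phi(g_1)\tilde\phi(g_2)\tilde\phi(g_1g_2)^{-1}$. Reading it off the product in $U(R,n+1)$ gives
\begin{equation*}
c(g_1,g_2)=\sum_{k=2}^{n}\phi_{1,k}(g_1)\,\phi_{k,n+1}(g_2)=\sum_{k=2}^{n}(m_{1,k}\cup m_{k,n+1})(g_1,g_2),
\end{equation*}
which is precisely the cocycle representative of $\langle X_1,\dots,X_n\rangle_M$ from the definition in Section \ref{sec:def-massey}.

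The only real difficulty is sign bookkeeping in the dictionary between matrix entries and cochain representatives (which is invisible over $\F_2$, the ring of interest for the rest of the paper) together with a careful check that the quotient by $Z(R,n+1)$ forgets exactly the $(1,n+1)$ coordinate, so that $\overline{U}(R,n+1)$-valued homomorphisms are honestly parametrised by the remaining entries $\phi_{i,j}$ and no hidden coherence condition is lost in the passage from $U$ to $\overline{U}$.
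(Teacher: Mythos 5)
The paper offers no proof of this statement: it is quoted verbatim from Dwyer's work (\cite{Dwyer2}), so there is nothing internal to compare against. Your reconstruction is essentially Dwyer's original argument --- reading the defining-system conditions off the multiplication rule $\phi_{i,j}(g_1g_2)=\phi_{i,j}(g_1)+\phi_{i,j}(g_2)+\sum_{k=i+1}^{j-1}\phi_{i,k}(g_1)\phi_{k,j}(g_2)$, noting that this rule for $(i,j)\neq(1,n+1)$ never involves the $(1,n+1)$ entry so that it descends to $\overline{U}(R,n+1)$, and computing the factor set of the pulled-back extension via the zero-entry set-theoretic lift --- and it is correct; the sign issue you flag is genuinely only a matter of convention and disappears over $\F_2$, which is the only case the paper uses.
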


\subsection{Iterated Massey products in higher degrees}
In order to compute iterated Massey products of higher degree elements
we use the Yoneda cocomplex \cite[Section 2.5]{Borge-Thesis}:

Fix $G$ a $p$-group and $P_\bullet \to \F_p$ a $\F_p[G]$-free resolution with differentials
$\partial_i \colon P_{i+1} \to P_{i}$.
\begin{definition}
The \emph{Yoneda cocomplex} $\Hom^{(\bullet)}_{\F_p[G]}(P_\bullet,P_\bullet)$ is
defined as:
\begin{itemize}
\item In degree $i$ we have the $\F_p[G]$-module:
$$
\Hom^{i}_{\F_p[G]}(P_\bullet,P_\bullet) = \prod_{n\in\Z} \Hom_{\F_p[G]}(P_{n+i},P_{n}),
\quad i\geq 0
$$
and $0$ otherwise.
\item The differential of
$\phi^i=\{\phi^i_n \colon P_{n+i} \to P_n\}_{n\in\Z} \in \Hom^{i}_{\F_p[G]}(P_\bullet,P_\bullet)$,
is defined as:
$$
\delta^i(\phi_n^i)=\partial_{n-1}\phi_{n}^i-(-1)^i\phi_{n-1}^i\partial_{n+i-1} .
$$
\item The algebra structure in the cohomology of $\Hom^{(\bullet)}_{\F_p[G]}(P_\bullet,P_\bullet)$
is induced by the composition of elements as a cochain morphisms.
\end{itemize}
\end{definition}
The following result tells us that we can use this tool to work with the cohomology
of a group:
\begin{theorem}[\cite{Borge-Thesis}]
$H^i(\Hom_{\F_p[G]}^{(\bullet)}(P_\bullet,P_\bullet),\delta^\bullet) \cong H^i(G;\F_p)$ for all $i\geq 0$.
\end{theorem}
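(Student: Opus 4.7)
The plan is to identify the Yoneda cocomplex with the standard complex computing $\Ext^*_{\F_p[G]}(\F_p,\F_p)$, which equals $H^*(G;\F_p)$. Let $\epsilon\colon P_0 \to \F_p$ denote the augmentation of the chosen resolution, and let $\Hom_{\F_p[G]}(P_\bullet,\F_p)$ denote the usual cochain complex with differential $\partial^*$ given by precomposition with $\partial_\bullet$. First I would define a cochain morphism
\[
\Phi\colon \bigl(\Hom^{(\bullet)}_{\F_p[G]}(P_\bullet,P_\bullet),\delta^\bullet\bigr) \longrightarrow \bigl(\Hom_{\F_p[G]}(P_\bullet,\F_p),\partial^*\bigr),
\]
by sending $\{\phi^i_n\}_{n\in\Z}\in\Hom^{i}_{\F_p[G]}(P_\bullet,P_\bullet)$ to $\epsilon\circ\phi^i_0\in \Hom_{\F_p[G]}(P_i,\F_p)$. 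A direct computation using $\epsilon\circ\partial_0=0$ and the formula defining $\delta^i$ shows that $\Phi$ commutes with the differentials.

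Next I would show that $\Phi$ induces an isomorphism on cohomology. For surjectivity, given a cocycle $f\colon P_i\to \F_p$ (i.e.\ $f\circ\partial_i=0$), the standard comparison theorem for projective resolutions produces inductively a chain map $\tilde f = \{\tilde f_n\colon P_{n+i}\to P_n\}_{n\geq 0}$ with $\epsilon\circ\tilde f_0 = f$: projectivity of $P_i$ and surjectivity of $\epsilon$ yield $\tilde f_0$; assuming $\tilde f_{n-1}$ constructed, the composite $\tilde f_{n-1}\circ\partial_{n+i-1}$ lands in $\im\partial_{n-1}=\ker\partial_{n-2}$ by exactness (with $\ker\epsilon$ replacing $\ker\partial_{-1}$ at the first step), and projectivity of $P_{n+i}$ yields the required lift $\tilde f_n$. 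Extending $\tilde f$ by zero for $n<0$ gives a cocycle in the Yoneda cocomplex with $\Phi(\tilde f)=f$.

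For injectivity, suppose $\phi^i$ is a cocycle with $\Phi(\phi^i)=\epsilon\circ\phi^i_0 = g\circ\partial_i$ for some $g\colon P_{i-1}\to\F_p$. The same lifting procedure produces $\tilde g\in\Hom^{i-1}_{\F_p[G]}(P_\bullet,P_\bullet)$ with $\epsilon\circ\tilde g_0=g$, and a careful calculation shows that the cocycle $\phi^i - \delta^{i-1}(\tilde g)$ maps to zero under $\Phi$ and that its components can be inductively built as a coboundary using projectivity of the $P_n$'s together with exactness of $P_\bullet\to\F_p$. This exhibits $\phi^i$ as a coboundary in the Yoneda cocomplex.

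The main obstacle is bookkeeping rather than conceptual content: the result is ultimately the well-known identification of Yoneda $\Ext$ with derived-functor $\Ext$, but one must track carefully the sign $(-1)^i$ appearing in $\delta^i$, the convention that families are indexed by all $n\in\Z$ while only non-negative degrees carry non-trivial data, and the boundary case $n=0$ where $\ker\epsilon$ plays the role of an image. Once these conventions are pinned down, both inductive constructions proceed verbatim as in the standard proof of the comparison theorem.
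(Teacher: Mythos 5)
Your argument is correct: identifying the Yoneda cocomplex with $\Hom_{\F_p[G]}(P_\bullet,\F_p)$ via $\phi^i\mapsto\epsilon\circ\phi^i_0$ and invoking the comparison theorem for projective resolutions (lifting cocycles to chain maps for surjectivity, building a degree-$(i-1)$ homotopy for injectivity) is the standard proof that Yoneda $\Ext$ agrees with derived-functor $\Ext$, hence with $H^*(G;\F_p)$. The paper itself offers no proof — the theorem is quoted from Borge's thesis — so there is nothing internal to compare against; your route is the expected one, and the only blemishes are cosmetic (e.g.\ the coboundary of $g\colon P_{i-1}\to\F_p$ is $g\circ\partial_{i-1}$, not $g\circ\partial_i$, and the sign $(-1)^i$ must be absorbed into the notion of degree-$i$ chain map), which you already flag as bookkeeping.
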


%%%%%%%%%%%%%%%%%%%%%%%%%%%%%%%%%%
% 3.- Maximal nilpotency class 2-groups
%%%%%%%%%%%%%%%%%%%%%%%%%%%%%%%%%%

\section{Naximal nilpotency class 2-groups} 
The maximal nilpotency class finite $2$-groups are precisely the
dihedral, quaternion and
semidihedral groups.

Consider the following finite presentations of these three families of $2$-groups:
\begin{equation}\label{notationG}
\begin{array}{l}
D_{2^n}=\langle x , y \mid x^{2^{n-1}}=1, y^2=1 , yxy^{-1}=x^{-1}\rangle ,\\
Q_{2^n}=\langle x , z \mid x^{2^{n-1}}=1, z^2=x^{2^{n-2}} ,
zxz^{-1}=x^{-1}\rangle
\mbox{ and }\\
SD_{2^n}=\langle x , t \mid x^{2^{n-1}}=1, t^2=1 ,
txt^{-1}=x^{2^{n-2}-1}\rangle .
\end{array}
\end{equation}
And also the following cohomology rings with coefficients in $\F_2$:
\begin{equation}\label{notationHG}
\begin{array}{l}
H^*(BD_4)\cong \F_2[X,Y],  \\
H^*(BD_{2^n})\cong \F_2[X,Y,W]/(X^2+XY) \mbox{ for $n\geq 3$},\\
H^*(BQ_8)\cong \F_2[X,Y,V]/(X^2+XY+Y^2,X^2Y+XY^2), \\
H^*(BQ_{2^n})\cong \F_2[X,Y,V]/(X^2+XY,Y^3)  \mbox{ for $n\geq 4$ and}\\
H^*(BSD_{2^n}) \cong \F_2[X,Y,U,V]/(X^2\!+\!XY,XU,X^3,U^2+(X^2+Y^2)V) \\
 \quad\quad\quad\quad\quad \mbox{ for $n\geq 4$}.
\end{array}
\end{equation}
where $\deg(X)=\deg(Y)=1$, $\deg(W)=2$, $\deg(U)=3$ and $\deg(V)=4$ in
all the cases.

The results in this paper will not require that the morphisms between the cohomologies of
the spaces preserve the structure of 
algebra over the Steenrod
Algebra, but in some proofs we will use that such a structure exists.
More precisely we will
use that it is unstable and that it is known
that $\Sq{1}(W)=WY$ in $H^*(BD_{2^n})$ ($n\geq3$).

The following properties can be found in \cite{BL}. There is a tower of principal fibrations:
\begin{equation}\label{fibration}
\cdots \stackrel{\pi_{n}}{\to} BD_{2^n} \stackrel{\pi_{n-1}}{\to}
BD_{2^{n-1}} \stackrel{\pi_{n-2}}{\to}
\cdots \stackrel{\pi_{3}}{\to} BD_8 \stackrel{\pi_{2}}{\to} BD_4
\end{equation}
where $\pi_2$ is classified by the class $X^2+XY \in H^2(BD_4)$ and $\pi_n$ by
$W \in H^2(BD_{2^n})$ for $n\geq 3$.

The quaternion and semidihedral groups fit in the following central extensions:
\begin{equation}\label{extension}
\Z/2 \to Q_{2^n} \to D_{2^{n-1}} \mbox{ and } \Z/2 \to SD_{2^n} \to D_{2^{n-1}}
\end{equation}
classified by the following classes in $H^2(BD_{2^{n-1}})$: $X^2+XY+Y^2$ in the case $Q_8$, $W+Y^2$ in the
case $Q_{2^n}$ ($n\geq4$) and
$W+X^2$ in the case $SD_{2^n}$ ($n\geq4$).

%%%
% 4.- Iterated Massey products in the cohomology of maximal nilpotency class 2-groups
%%%

\section{Iterated Massey products in the cohomology of maximal nilpotency class 2-groups}
\label{sec:calculo-massey}

This section is devoted to the computation of the iterated Massey products in
the cohomology
of dihedral, quaternion and semidihedral groups. The results presented here as
Lemmas \ref{MasseyProdD}, \ref{MasseyProdG}, \ref{MasseyQ} and
\ref{MasseySD} can be summarized
in the following theorem:
\begin{theorem}
Consider $D_{2^n}$ $(n\geq 2)$, $Q_{2^n}$ $(n\geq 3)$ and
$SD_{2^n}$ $(n\geq 4)$ the
dihedral, quaternion and semidihedral groups of
order $2^n$, and the generators of their cohomology as denoted in
Equation \eqref{notationHG}. Then:
\begin{enumerate}[\rm (a)]
\item Neither of the elements $W$, $W+Y^2$ or $W+X^2$ is contained in an iterated Massey product
of degree one elements
in the cohomology of $D_{2^n}$ of order less than $2^{n-1}$.
\item The $2^{n-1}$-fold iterated Massey product in the cohomology of $D_{2^n}$
defined as
$
\langle
X,X+Y,\ldots,X,X+Y
\rangle
$  contains $W$, $W+Y^2$ and $W+X^2$.
\item The $2^{n-1}$th-fold iterated Massey product in the cohomologies of
$D_{2^n}$, $Q_{2^n}$ and $SD_{2^n}$ defined as
$
\langle
X,X+Y,\ldots,X,X+Y
\rangle
$
does not contain the zero element.
\item The $m$-fold iterated Massey product
$\langle
X,X+Y,X,X+Y,\ldots
\rangle
$
is not defined for $m> 2^{n-1}$ in any of the cohomologies of
$D_{2^n}$, $Q_{2^n}$ and $SD_{2^n}$.
\item $\langle Y,Y^2,Y,Y^2\rangle = \{V\}$ in the cohomology of $Q_{2^n}$.
\item $\langle X,X^2,Y \rangle = \{U, U+Y^3\}$ and $\langle X,X^2,X,X^2
\rangle=\{V, V+YU\}$
in the cohomology of $SD_{2^n}$.
\end{enumerate}
\end{theorem}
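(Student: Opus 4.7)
The plan is to split the theorem along its natural fault line. Statements (a)--(d) concern Massey products built from degree-one classes, for which the natural tool is Dwyer's Theorem~\ref{theoremdwyer}: it puts defining systems $M$ for $\langle X_1,\dots,X_m\rangle$ in bijection with homomorphisms $\phi_M\colon G\to\overline{U}(\F_2,m+1)$ whose near-diagonal components are $-X_1,\dots,-X_m$, and identifies $\langle X_1,\dots,X_m\rangle_M$ with the characteristic class of the central $\Z/2$-extension of $G$ obtained by pulling back \eqref{extcentral}. Statements (e) and (f) involve higher-degree classes and will be attacked directly in the Yoneda cocomplex using the explicit cochain representatives of the generators of $H^*(BQ_{2^n})$ and $H^*(BSD_{2^n})$ provided by Appendix~B.

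For (b), I would feed the faithful representations of $D_{2^{n+1}}$, $Q_{2^{n+1}}$ and $SD_{2^{n+1}}$ into $\GL_{2^{n-1}+1}(\F_2)$ constructed in Appendix~A into Dwyer's machinery. After conjugating so their images lie in the unipotent subgroup $U(\F_2,2^{n-1}+1)$, one reads off the near-diagonal entries directly and verifies that along the quotient $D_{2^n}$ they form the alternating sequence $-X,-(X+Y),-X,\dots$; the value of the resulting defining system is then, by Dwyer, the characteristic class of the pullback extension, which by construction is the relevant element among $W$, $W+Y^2$, $W+X^2$. For the lower bound (a), I would argue by contrapositive: if $W$ were in $\langle X_1,\dots,X_m\rangle$ for some $m<2^{n-1}$, Dwyer's theorem would supply a homomorphism $\tilde\phi\colon D_{2^{n+1}}\to U(\F_2,m+1)$ that is nontrivial on the central $\Z/2$ (the extension classified by $W$ is nonsplit), so the image of the order-$2^n$ generator $\tilde x$ would itself have order at least $2^n$. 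But the exponent of $U(\F_2,m+1)$ is $2^{\lceil\log_2(m+1)\rceil}\le 2^{n-1}$ when $m+1\le 2^{n-1}$, a contradiction. The same argument handles $W+Y^2$ and $W+X^2$, using that $Q_{2^{n+1}}$ and $SD_{2^{n+1}}$ also have exponent $2^n$.

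Parts (c) and (d) then fall out: for (c), the indeterminacy of the length-$2^{n-1}$ Massey product is built out of products of shorter Massey products with cohomology classes, and by (a) no such shorter product can reach $W$, so $W$ is not in the indeterminacy and the coset cannot contain $0$; for (d), a defined length-$m$ Massey product with $m>2^{n-1}$ would force every contiguous length-$2^{n-1}$ sub-product to contain $0$, contradicting (c) applied to the appropriate cyclic shift of $X,X+Y,X,X+Y,\dots$. Finally, for (e) and (f), the relations in Equation~\eqref{notationHG} immediately give $Y\cup Y^2=0$ in $H^*(BQ_{2^n})$ and $X\cup X^2=X^2\cup Y=0$ in $H^*(BSD_{2^n})$, so these Massey products are defined; using the Yoneda cocomplex together with the explicit cochain representatives from Appendix~B, I would lift these vanishing relations to cochains $m_{i,j}$ completing a defining system, then compute $\sum_k m_{1,k}\cup m_{k,n+1}$ directly and match the result to $V$, $U$, or $V+YU$. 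The indeterminacy is then computed separately: for (e) the degree-$4$ classes contributing to it all vanish, while for (f) the class $Y^3$ survives, yielding the two-element sets $\{U,U+Y^3\}$ and $\{V,V+YU\}$. The main technical obstacle throughout is engineering Appendix~A's matrix representations so that the near-diagonal pattern is exactly $-X,-(X+Y),\dots$, and, for (e)--(f), carrying out the cochain lifts in the Yoneda cocomplex carefully enough to pin down both the value and the indeterminacy.
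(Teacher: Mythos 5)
Your treatment of (a), (b), (d), (e) and (f) follows essentially the same route as the paper: Dwyer's correspondence plus the Appendix~A representations for (b), the exponent bound on unipotent groups (the paper's Lemma~\ref{minima_m}, phrased via minimal polynomials rather than $\lceil\log_2\rceil$, combined with the triviality of the kernel $K$ in the pullback) for (a), the reduction of (d) to (c) via contiguous subproducts, and explicit Yoneda-cocomplex lifts with a separate indeterminacy analysis for (e) and (f).

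The genuine gap is in (c). You argue that the indeterminacy of the $2^{n-1}$-fold product is ``built out of products of shorter Massey products with cohomology classes,'' so that by (a) it cannot contain $W$ and hence the product cannot contain both $W$ and $0$. Two problems. First, that description of the indeterminacy is only available for triple products (May's Proposition~2.3, which the paper invokes precisely and only in the $3$-fold case of Lemma~\ref{MasseySD}); for $m$-fold products with $m\geq 4$ the set $\langle X_1,\dots,X_m\rangle$ need not be a coset and its indeterminacy has no such clean decomposition, so the premise is unjustified. Second, and more decisively, your argument says nothing about $Q_{2^n}$ and $SD_{2^n}$: there the product lands in $H^2$, which is spanned by $X^2$ and $Y^2$, there is no class $W$, and part (b) gives you no nonzero element of the product to anchor the coset argument. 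The paper's actual proof of (c) is uniform and direct: if $0$ were a value of the $2^{n-1}$-fold product, the corresponding pullback extension would split, yielding a lift $G\to U(\F_2,2^{n-1}+1)$ whose value on $x$ has every superdiagonal entry equal to $1$ (because $X(x)=(X+Y)(x)=1$); by Lemma~\ref{lemmaorderB} such a matrix has order $2^n$, whereas $x$ has order $2^{n-1}$ in each of $D_{2^n}$, $Q_{2^n}$ and $SD_{2^n}$ --- a contradiction. You need this (or an equivalent) argument; as written, (c) fails for the quaternion and semidihedral cases and, through it, so does (d).
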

The proof of this theorem will be done in the next two subsections: the first one is
devoted to the computations of iterated Massey products of degree one elements while the second
will use the Yoneda cocomplex to prove last two statements in the theorem.

\subsection{Iterated Massey products of degree one elements}
The representations described in Appendix \ref{sec:representations} allows us to compute some
iterated Massey
products in the cohomology of the dihedral, quaternion and semidihedral groups:

\begin{lemma}\label{MasseyProdD}
Consider the cohomology of $D_{2^n}$, $n\geq 3$ as denoted in Equation
\eqref{notationHG}.
Then:
\begin{enumerate}[\rm (a)]
\item Neither of the elements $W$, $W+Y^2$ or $W+X^2$ is contained in an iterated Massey product
of degree one elements
of order less than $2^{n-1}$.
\item The $2^{n-1}$-fold iterated Massey product $
\langle
X,X+Y,\ldots,X,X+Y
\rangle
$ contains $W$, $W+Y^2$ and $W+X^2$.
\end{enumerate}
\end{lemma}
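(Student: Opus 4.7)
The plan is to invoke Dwyer's Theorem \ref{theoremdwyer}, which identifies defining systems for the $k$-fold Massey product of degree one classes $X_1,\ldots,X_k\in H^1(BD_{2^n};\F_2)$ with homomorphisms $\phi\colon D_{2^n}\to \overline{U}(\F_2,k+1)$ having $-X_1,\ldots,-X_k$ as near-diagonal components; the value of the Massey product is the characteristic class in $H^2(BD_{2^n};\F_2)$ of the $\F_2$-central extension obtained by pulling back $U(\F_2,k+1)\to\overline{U}(\F_2,k+1)$. Recall from Equation \eqref{extension} that $W$, $W+Y^2$ and $W+X^2$ are the characteristic classes of the extensions of $D_{2^n}$ with total group $D_{2^{n+1}}$, $Q_{2^{n+1}}$ and $SD_{2^{n+1}}$, respectively.

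For part (b), the plan is to use the explicit faithful representations of $D_{2^{n+1}}$, $Q_{2^{n+1}}$ and $SD_{2^{n+1}}$ in $\GL_{2^{n-1}+1}(\F_2)$ constructed in Appendix A. Since $U(\F_2,m)$ is a Sylow $2$-subgroup of $\GL_m(\F_2)$, the images of these representations can be conjugated into $U(\F_2,2^{n-1}+1)$, and reducing modulo the central $\Z/2$ produces in each case a homomorphism $\phi\colon D_{2^n}\to \overline{U}(\F_2,2^{n-1}+1)$. By Theorem \ref{theoremdwyer} this yields a defining system for a length-$2^{n-1}$ Massey product with value the corresponding characteristic class ($W$, $W+Y^2$ or $W+X^2$). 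I will then inspect the matrices directly to identify the near-diagonal components of $\phi$ as the alternating sequence $X,X+Y,X,X+Y,\ldots$ of length $2^{n-1}$.

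For part (a), I will argue by contradiction: suppose some $\langle X_1,\ldots,X_k\rangle$ with $k<2^{n-1}$ and $X_i\in H^1(BD_{2^n};\F_2)$ contains one of $W$, $W+Y^2$ or $W+X^2$. Theorem \ref{theoremdwyer} then gives a homomorphism $\phi\colon D_{2^n}\to \overline{U}(\F_2,k+1)$ whose pulled-back extension $E$ is one of $D_{2^{n+1}}$, $Q_{2^{n+1}}$ or $SD_{2^{n+1}}$, a group of order $2^{n+1}$. The critical step is to establish that $\phi$ must be faithful: then $E$ embeds into $U(\F_2,k+1)\subset \GL_{k+1}(\F_2)$, and the minimality asserted in Appendix A forces $k+1\ge 2^{n-1}+1$, contradicting the hypothesis. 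For the faithfulness, note that $\ker\phi$ is a normal subgroup of $D_{2^n}$ contained in the intersection of the kernels of the near-diagonal components, which lie in $H^1(BD_{2^n};\F_2)=\langle X,Y\rangle$; any nontrivial such kernel yields a quotient $D_{2^n}/\ker\phi$ equal either to $\Z/2$ or to a dihedral group $D_{2^{j+1}}$ with $j\le n-2$. Using the standard fact that the classifying map of a principal fibration pulls back its own classifying class to zero (so $\pi_m^*(W_m)=0$ in $H^2(BD_{2^{m+1}};\F_2)$), the inflation image in $H^2(BD_{2^n};\F_2)$ from any such proper quotient is contained in $\langle X^2,Y^2\rangle$, which is disjoint from $\{W,W+X^2,W+Y^2\}$. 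Hence $\phi$ is faithful and the argument closes.

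The hard part is the combinatorial verification in part (b): one must check, after conjugating the faithful representations from Appendix A into upper triangular form and passing to the quotient by the central $\Z/2$, that the near-diagonal components really are the alternating sequence $X,X+Y,X,X+Y,\ldots$ Once this identification is made, both parts of the lemma follow from the Dwyer correspondence plus the minimality of the representation dimension.
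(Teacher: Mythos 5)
Your proposal is correct and its overall skeleton coincides with the paper's: part (b) is obtained from the explicit representations of Lemma \ref{representations} via Dwyer's correspondence (Theorem \ref{theoremdwyer}), and part (a) reduces, again via Dwyer, to showing that the group $G\in\{D_{2^{n+1}},Q_{2^{n+1}},SD_{2^{n+1}}\}$ arising from the pulled-back extension embeds in $U(\F_2,m+1)$, whence Lemma \ref{minima_m} forces $m\geq 2^{n-1}$. (Minor remark on (b): the matrices of Appendix A are already upper unitriangular, so no Sylow conjugation is needed; and the central element $x_{n-1}^{2^{n-1}}$ is precisely the generator of $Z(\F_2,2^{n-1}+1)$, which is what makes the reduction modulo the centre well defined.) The one place where you genuinely diverge is the injectivity step in (a). The paper works with the kernel $K$ of $G\to U(\F_2,m+1)$ and argues group-theoretically: $G$ has centre exactly $\Z/2$, any nontrivial normal subgroup of $G$ meets (hence contains) that centre, but the centre maps isomorphically onto $Z(\F_2,m+1)$, so $K=1$. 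You instead show $\ker\phi\subseteq D_{2^n}$ is trivial cohomologically: a nontrivial kernel would force the extension class to be inflated from a proper quotient ($\Z/2$, $(\Z/2)^2$ or a smaller dihedral group), and using $\pi_m^*(W_m)=0$ from the tower \eqref{fibration} together with $XY=X^2$, every such inflation image lands in $\langle X^2,Y^2\rangle$, which misses $W$, $W+X^2$ and $W+Y^2$. Both arguments are valid and of comparable length; the paper's is purely group-theoretic and uses only that maximal class $2$-groups have centre $\Z/2$, while yours trades that for knowledge of the normal subgroup lattice of $D_{2^n}$ plus the vanishing of $W$ under inflation, which has the mild advantage of staying entirely inside the cohomological language already set up in Section 3.
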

\begin{proof}
Assume first that an $m$-fold iterated Massey product contains either
$W$, $W+Y^2$ or $W+X^2$. Then we would have the following diagram:
$$
\xymatrix{&  \Z/2 \ar[d] \ar@{=}[r]&  \Z/2 \ar[d] \\
K \ar[r] \ar@{=}[d]&  G \ar[d] \ar[r]&  U(2,m+1) \ar[d] \\
K \ar[r]&  D_{2^n} \ar [r]&  \overline{U}(2,m+1)
}
$$
where the bottom right square is a pull-back, $K$ is defined as the
kernel of the horizontal arrows and the vertical
lines are central extensions.

As either $W$, $W+Y^2$ or $W+X^2$ classifies the extension, then $G$ is
isomorphic to
either $D_{2^{n+1}}$, $Q_{2^{n+1}}$ or $SD_{2^{n+1}}$.
The center of $G$ is exactly $\Z/2$. If $K$ is non trivial, as it is a
normal subgroup in $G$, then
$K$ intersects non-trivially the center of $G$, so it contains the
center of $G$. But this implies
that the center of $G$ maps injectively to $D_{2^n}$, and it
contradicts the exactness of the
vertical line. This implies that $K$ is trivial, so, there is an
injection of $G$ in $U(2,m+1)$, so
by Lemma \ref{minima_m}, $m\geq 2^{n-1}$.

The representations in Lemma \ref{representations} and Theorem
\ref{theoremdwyer} tell us that
the $2^{n-1}$-fold iterated Massey product
$
\langle
X,X+Y,\ldots,X,X+Y
\rangle
$ contains $W$, $W+Y^2$ and $W+X^2$.
\end{proof}

The following Lemma will use the same notation for the generators of
$H^1(G)$ for different $G$, as noted
in Equation \eqref{notationHG} and
the result applies to all of them:
\begin{lemma}\label{MasseyProdG}
Consider the cohomology of the dihedral, quaternion and semidihedral
groups of order $2^n$
as denoted in Equation \eqref{notationHG}.
Then, for all these groups:
\begin{enumerate}[\rm (a)]
\item The $2^{n-1}$-fold iterated Massey product defined as
$
\langle
X,X+Y,\ldots,X,X+Y
\rangle
$
does not contain the zero element.
\item The $m$-fold iterated Massey product
$\langle
X,X+Y,X,X+Y,\ldots
\rangle
$ is not defined for $m> 2^{n-1}$.
\end{enumerate}
\end{lemma}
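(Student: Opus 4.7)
The plan is to recast both parts, via Dwyer's correspondence (Theorem~\ref{theoremdwyer}), as problems about matrices in the upper-triangular group $U(\F_2, m+1)$, and then close them with a uniform computation on the image of the element $x$ of order $2^{n-1}$ that is common to all three presentations of Equation~\eqref{notationG}. In each case $X(x) = 1$ and $Y(x) = 0$, so $(X+Y)(x) = 1$, and hence any matrix representing $x$ with near-diagonal components alternating between $X$ and $X+Y$ has the form $I + J_0 + M$, where $J_0$ is the single $(m+1) \times (m+1)$ Jordan block and $M$ is strictly upper triangular supported on superdiagonals $j - i \geq 2$.

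For (a), suppose $0$ is a value of the length-$2^{n-1}$ product $\langle X,X+Y,\dots,X,X+Y\rangle$. By Theorem~\ref{theoremdwyer} the corresponding central extension of $G$ by $\F_2$ splits, so the associated $\phi_M$ lifts to a genuine group homomorphism $\sigma\colon G \to U(\F_2, 2^{n-1}+1)$ whose near-diagonal components are $X, X+Y, X, X+Y, \dots$. Writing $\sigma(x) = I + J_0 + M$ and expanding $(J_0+M)^{2^{n-1}}$, each length-$2^{n-1}$ word in $J_0$ and $M$ that contains at least one $M$ has superdiagonal shift strictly greater than $2^{n-1}$; in a matrix of size $2^{n-1}+1$ all such words vanish, leaving $\sigma(x)^{2^{n-1}} = I + J_0^{2^{n-1}} = I + E_{1,2^{n-1}+1} \neq I$. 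This contradicts $x^{2^{n-1}} = 1$ in $G$.

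For (b), suppose a defining system exists for the $m$-fold product with $m > 2^{n-1}$. Theorem~\ref{theoremdwyer} produces $\phi_M\colon G \to \overline{U}(\F_2, m+1)$ with the correct near-diagonal components; this $\phi_M$ need not lift to $U(\F_2, m+1)$, but we may still choose any set-theoretic lift $\tilde\phi$ through the central extension $Z(\F_2, m+1) \to U(\F_2, m+1) \to \overline{U}(\F_2, m+1)$. Its first-superdiagonal values still equal those of $\phi_M$, so $\tilde\phi(x) = I + J_0 + M$. Because $\phi_M$ is a homomorphism and $x^{2^{n-1}} = 1$, the matrix $\tilde\phi(x)^{2^{n-1}}$ lies in the central kernel $Z(\F_2, m+1)$, hence equals $I$ or $I + E_{1,m+1}$. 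However, the same expansion as in (a) shows that the projection of $\tilde\phi(x)^{2^{n-1}} - I$ onto the $2^{n-1}$-th superdiagonal equals $J_0^{2^{n-1}} = \sum_{i=1}^{m+1-2^{n-1}} E_{i, i+2^{n-1}} \neq 0$; since $m > 2^{n-1}$, this superdiagonal is distinct from the $m$-th, so neither $0$ nor $E_{1,m+1}$ can produce this contribution, a contradiction.

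The main technical point is the superdiagonal filtration bookkeeping: in characteristic two the non-commutative expansion of $(J_0+M)^{2^{n-1}}$ is the sum of all $2^{2^{n-1}}$ ordered words of length $2^{n-1}$ in $J_0$ and $M$, and any such word containing at least one $M$ has total superdiagonal shift at least $2^n - (\textrm{number of } J_0\textrm{'s}) > 2^{n-1}$. This isolates $J_0^{2^{n-1}}$ as the unique contribution to the $2^{n-1}$-th superdiagonal, and yields the contradiction uniformly for $D_{2^n}$, $Q_{2^n}$ and $SD_{2^n}$, since only the order of $x$ and the values of $X$ and $Y$ on $x$ enter the argument.
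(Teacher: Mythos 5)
Your proof is correct and, for part (a), follows essentially the same route as the paper: Dwyer's correspondence turns a vanishing value of the product into a lift $\sigma\colon G\to U(\F_2,2^{n-1}+1)$ sending $x$ to a unipotent matrix with all first-superdiagonal entries equal to $1$, whose $2^{n-1}$-th power cannot be the identity. Your superdiagonal bookkeeping actually spells out a point the paper leaves implicit, since its Lemma on the order of $B$ only treats the matrix with no entries above the first superdiagonal, whereas $\sigma(x)$ may have arbitrary higher-superdiagonal terms; your word-counting argument closes that gap cleanly. For part (b) you diverge mildly: the paper reduces to (a) by observing that every consecutive length-$2^{n-1}$ subproduct of a defined $m$-fold product must contain $0$, while you rerun the matrix computation directly on a lift of $\phi_M(x)$ and compare superdiagonals against the central subgroup $Z(\F_2,m+1)$; both arguments are valid and rest on the same mechanism, with the paper's version being shorter and yours being self-contained.
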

\begin{proof}
To prove (a), assume that the zero element is in a $2^{n-1}$-fold iterated Massey product of type
$
\langle
X,X+Y,\ldots,X,X+Y
\rangle
$ of the cohomology of $G$, where $G$ is either $D_{2^n}$, $Q_{2^n}$ or
$SD_{2^n}$.
Then there would be a group morphism from $G$ to $\overline{U}(\F_2,2^{n-1}+1)$
which lifts to a group morphism from $G$ to $U(\F_2,2^{n-1}+1)$ such that the
image of $x$ is a matrix with all the entries in position $(i,i+1)$
equals $1$. The order of
such an element is $2^n$, bigger than the order of $x$, getting a contradiction.

(b) can be deduced from (a): if a $m$-fold iterated Massey product $\langle
X,X+Y,X,X+Y,\ldots
\rangle
$ with $m> 2^{n-1}$ is defined, then the zero element must
be in all the strictly shorter subproducts,
in particular in the $2^{n-1}$-fold product of this type.
\end{proof}

\subsection{Iterated Massey products in higher degrees}
Till now all the computations of iterated Massy products have involved only elements in cohomology in degree $1$, obtaining
elements in degree $2$. In this subsection we compute Massey products involving also elements
in degree $2$, and obtaining the generators in degree $3$ and $4$ which appear in the cohomology
of the quaternion and semidihedral groups.

We start with the quaternion groups.

\begin{lemma}\label{MasseyQ}
Consider the notation in Equation \eqref{notationHG} for the
cohomology of $Q_{2^n}$, the quaternion
group of order $2^n$ with  $n\geq 3$.
Then
$$
\langle Y,Y^2,Y,Y^2\rangle = \{V\} .
$$
\end{lemma}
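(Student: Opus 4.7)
The plan is to construct in the Yoneda cocomplex an explicit defining system for $\langle Y, Y^2, Y, Y^2\rangle$ whose value represents $V$, and to show that the indeterminacy of the Massey product vanishes.

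First, I verify the product is defined. The cup products $Y\cdot Y^2$ and $Y^2\cdot Y$ equal $Y^3$, which vanishes in $H^*(BQ_{2^n})$: for $n\geq 4$ this is a defining relation, while for $n=3$ it follows from multiplying $X^2+XY+Y^2=0$ by $Y$ and using $X^2Y+XY^2=0$. The triple Massey products $\langle Y,Y^2,Y\rangle\subseteq H^3$ and $\langle Y^2,Y,Y^2\rangle\subseteq H^4$ will be seen to contain $0$ as a by-product of the construction of the 4-fold defining system below.

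Next I analyze the indeterminacy, which lies in $H^4(BQ_{2^n})=\F_2\cdot V$. The indeterminacy of a 4-fold Massey product is generated by contributions from modifying the intermediate matric entries; after reduction modulo coboundaries, these live inside $Y\cdot H^3 + H^3\cdot Y^2 + H^2\cdot H^2$. Since $H^3(BQ_{2^n})=\F_2\cdot XY^2$, one immediately checks $Y\cdot XY^2 = XY^3 = 0$ and $XY^2\cdot Y^2 = XY^4 = 0$; similarly the monomials $(XY)^2$, $XY\cdot Y^2$, $(Y^2)^2$ spanning $H^2\cdot H^2$ all vanish (using $Y^3=0$ together with the quadratic relation $X^2=XY$ for $n\geq 4$, or $X^2=XY+Y^2$ combined with $X^2Y+XY^2=0$ for $n=3$). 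Thus the indeterminacy is trivial, so $\langle Y,Y^2,Y,Y^2\rangle$ consists of at most one element.

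Finally, to produce an explicit defining system with value $V$, I work in the Yoneda cocomplex built from the free $\F_2[Q_{2^n}]$-resolution of $\F_2$ described in Appendix B, using the cochain representatives of $Y$, $Y^2$, and $V$ developed there. I first exhibit null-homotopies $m_{1,3}, m_{2,4}, m_{3,5}$ for the cup-product cochains representing $Y^3$, whose existence is guaranteed by $Y^3=0$ in cohomology. I then construct $m_{1,4}$ and $m_{2,5}$ satisfying the remaining defining equations --- the relevant obstructions in $H^3$ and $H^4$ vanish by the indeterminacy computation above, which in particular yields $\langle Y^2,Y,Y^2\rangle=\{0\}$ --- and evaluate the Massey cocycle $Y\cdot m_{2,5} + m_{1,3}\cdot m_{3,5} + m_{1,4}\cdot Y^2$. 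Identifying its class in $H^4(BQ_{2^n})$ with $V$ via the explicit Yoneda representative completes the proof. The main technical obstacle is this last chain-level identification, which relies critically on the specific free resolution and the explicit chain maps for $Y^2$ and $V$ developed in Appendix B; all the other ingredients are structural and follow routinely.
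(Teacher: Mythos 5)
Your overall architecture (explicit defining system in the Yoneda cocomplex to show $V$ is attained, plus a separate argument that the indeterminacy vanishes) matches the paper's, and the existence half is fine in outline, though you defer all of the actual chain-level work to ``Appendix B computations'' --- which is where essentially the entire content of the paper's proof lies. The genuine gap is in your indeterminacy argument. You assert that the indeterminacy of the $4$-fold product, ``after reduction modulo coboundaries,'' lies in $Y\cdot H^3 + H^3\cdot Y^2 + H^2\cdot H^2$, and you kill it by checking cup products. That containment is not a valid general principle for $4$-fold Massey products: only the modifications of the top-level entries $m_{1,4}$ and $m_{2,5}$ by cocycles contribute cup products $H^3\cdot Y^2$ and $Y\cdot H^3$. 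Modifying the degree-two entries $m_{1,3}$, $m_{2,4}$, $m_{3,5}$ by cocycles $c$ forces compensating changes in $m_{1,4}$ and $m_{2,5}$, and the resulting change in the value is a chain-level expression of the form $c\alpha'''+hY^2$ with $dh=cY$ --- i.e.\ an element of a \emph{triple} Massey product such as $\langle [c],Y,Y^2\rangle$, $\langle Y,[c],Y^2\rangle$ or $\langle Y,Y^2,[c]\rangle$, not a cup product. In $H^*(BQ_{2^n})$ these triple products (e.g.\ $\langle Y,Y^2,X^2\rangle$) land in $H^4=\F_2\cdot V$ with zero indeterminacy, so each is either $\{0\}$ or $\{V\}$, and deciding which requires exactly the kind of explicit computation with the cochain $\alpha$ (where $\delta\alpha=Y^3$) that the paper carries out: it shows the difference of two values is $\alpha(a'''X^2+(b'''+1)Y^2)+(a'X^2+b'Y^2)\alpha$ and verifies in the Yoneda cocomplex that this is always null-cohomologous.

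That this is not a harmless shortcut is shown by the companion computation in the same paper: for $\langle X,X^2,X,X^2\rangle$ in $H^*(BSD_{2^n})$ the analogous cup-product spaces are $X\cdot H^3=0$, $H^3\cdot X^2=0$ and $H^2\cdot H^2=\F_2\cdot Y^4$, yet the actual indeterminacy is $\{0,YU\}$ with $YU\notin\F_2\cdot Y^4$. So your reduction to cup products would give the wrong answer there, and its use in the quaternion case is unjustified even though the final conclusion $\langle Y,Y^2,Y,Y^2\rangle=\{V\}$ happens to be correct. To close the gap you must either quote a precise description of the $4$-fold indeterminacy in terms of lower Massey products and then compute those triple products, or do what the paper does and parametrize all defining systems explicitly, reducing the difference to concrete cochain compositions that you then evaluate.
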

\begin{proof}
In this proof we will consider $(P_\bullet,\partial_\bullet)$, the projective resolution of 
$\F_2$ as $\F_2[Q_{2^n}]$-module
described in Lemma \ref{lemmaprojresQ}. We also consider the generators $Y$ and $V$ in $H^*(BQ_{2^n})$ 
as elements in the Yoneda cocomplex as
computed in Lemma \ref{lemma:genHQ}.

With all this data, computing products in these generators
can be carried out by composing the corresponding cochain maps, so we can easily find cochain maps
representing $Y^2$ and $Y^3$:
\begin{enumerate}[(i)]
 \item $Y^2$ can be represented by cochain maps $(Y^2)_i\definicio Y_{i+1}\circ Y_i \colon P_{i+2} \to P_i$, obtaining:
$$
(Y^2)_{4i}= (1 \, 0), \,
(Y^2)_{4i+1}= \left( \begin{smallmatrix} 1 \\ 0 \end{smallmatrix} \right), \,
(Y^2)_{4i+2}= \left( \begin{smallmatrix} 0 \\ N_x \end{smallmatrix} \right)
\text{ and }
(Y^2)_{4i+3}= (0 \, N_x).
$$
 \item To compute the cochain maps representing $Y^3$ we can use the previous ones in the composition
$(Y^3)_i\definicio (Y^2)_{i+1}\circ Y_i \colon P_{i+3} \to P_i$, obtaining:
$$
(Y^3)_{4i}= (0), \,
(Y^3)_{4i+1}= \left( \begin{smallmatrix} N_x \\ 0 \end{smallmatrix} \right), \,
(Y^3)_{4i+2}= \left( \begin{smallmatrix} 0&0\\ 0&N_x \end{smallmatrix} \right)
\text{ and }
(Y^3)_{4i+3}= (N_x\, 0).
$$

\end{enumerate}
Now we proceed computing one element in the $4$-fold iterated Massey product
$\langle Y,Y^2,Y,Y^2 \rangle$. We must find the following coefficients in a defining system:
$$
\begin{pmatrix}
1&  Y&  \alpha&  \beta&  \\
0&  1&   Y^2&  \alpha&  \gamma \\
0&  0&    1&    Y&  \alpha \\
0&  0&    0&    1&   Y^2 \\
0&  0&    0&    0&    1
\end{pmatrix}
$$
such that $\delta \alpha=Y^3$, $\delta \beta = Y \alpha + \alpha Y$,
$\delta \gamma = Y^2\alpha + \alpha Y^2$. Remark that we just need to compute the
maps in low degrees, just enough to compose them and get the product. Now we proceed in the computation
of representatives for $\alpha$, $\beta$ and $\gamma$:
\begin{enumerate}[(i)]
 \item To compute $\alpha$ we start with $\alpha_0=(1\quad 0) \colon P^2 \to P^0$. Now use the relation
$(Y^3)_0=(\delta \alpha)_0=\alpha_0 \circ \partial_3 - \partial_1\circ \alpha_1$, so
$\alpha_1$ is a lifting of the $\F_2[Q_{2^n}]$-morphism $(Y^3)_0 + \alpha_0 \circ \partial_3$ (we can avoid the
signs as we are working in $\F_2$). This can be
done step by step obtaining $\alpha_i \colon P_{i+2} \to P_i$:
$$
\alpha_0=(1\, 0), \,
\alpha_1=\left(\begin{smallmatrix} 1 \\ 0 \end{smallmatrix}\right),\,
\alpha_2=\left(\begin{smallmatrix} J \\ L \end{smallmatrix}\right)
\text{ and }
\alpha_3=(K\quad L+N_x).
$$
 \item Computation of $\beta$ is done from the formula $\delta \beta = Y \alpha + \alpha Y$, so we need first
$\F_2[Q_{2^n}]$-morphisms $(Y \alpha + \alpha Y)_i\colon P_{i+3} \to P_i$ calculated as composition and sum of the
previously computed $Y_j$ and $\alpha_k$, obtaining:
$$
\begin{matrix}
(Y \alpha + \alpha Y)_0=(0), \,
(Y \alpha + \alpha Y)_1=\left(\begin{smallmatrix} L+N_x \\ J \end{smallmatrix}\right),\,
(Y \alpha + \alpha Y)_2=\left(\begin{smallmatrix} 0&J \\ K&N_x \end{smallmatrix}\right),\,
\\
(Y \alpha + \alpha Y)_3=(L+N_x \quad K)
\text{ and }
(Y \alpha + \alpha Y)_4=(0).
\end{matrix}
$$ 
So we can now proceed to compute a representative for $\beta$ also lifting step by step the corresponding
morphisms, using the same
technique as the used in the computation in $\alpha$. $\beta_i \colon P_{i+2} \to P_i$ can be taken as:
$$
\beta_0=(0\, 0), \,
\beta_1=\left(\begin{smallmatrix} 0 \\ 0 \end{smallmatrix}\right), \,
\beta_2=\left(\begin{smallmatrix} 1+J+J^2 \\ L(1+J) \end{smallmatrix}\right)
\text{ and }
\beta_3=(K\, L+N_x).
$$
\item Relation $\delta \gamma = Y^2\alpha + \alpha Y^2$ gives us the coefficients in the computation
of $\gamma$, following the same procedure as the computation of $\beta$. For the sake of completeness we
list here $(Y^2\alpha + \alpha Y^2)_i \colon P_{i+4} \to P_i$ for small $i$:
$$
\begin{matrix}
(Y^2\alpha + \alpha Y^2)_0=(J), \,
(Y^2\alpha + \alpha Y^2)_1=\left(\begin{smallmatrix} K&L \\ 0&0 \end{smallmatrix}\right),\,
(Y^2\alpha + \alpha Y^2)_2=\left(\begin{smallmatrix} J&0 \\ L&0 \end{smallmatrix}\right)
\\
\text{ and }
(Y^2\alpha + \alpha Y^2)_4=(K).
\end{matrix}
$$
And also some possible choices for $\gamma$. $\gamma_i \colon P_{i+3} \to P_i$ can be taken as:
$$
\gamma_0=(0), \,
\gamma_1=\left(\begin{smallmatrix} 0 \\ 1 \end{smallmatrix}\right), \,
\gamma_2=\left(\begin{smallmatrix} 0 & 1 \\ 1 & 0 \end{smallmatrix}\right)
\text{ and }
\gamma_3=(0\, 1).
$$
\end{enumerate}
Now we can get an element of the $4$-fold iterated Massey product composing and adding the previous computations
in the formula $Y\gamma+\alpha^2  + \beta Y$. We obtain the $\F_2[Q_{2^n}]$-morphism 
$(Y\gamma+\alpha^2  + \beta Y)_0 \colon P_4 \to P_0$ represented by the matrix $(1)$, so it is
equivalent to $V$.

This procedure proves that $V \in \langle Y,Y^2,Y,Y^2\rangle$, but
we have done several choices.
Let us see now that if we choose other elements we get again $V$: by
\cite[Theorem 3]{Kraines}
we can fix the representatives of $Y$ and $Y^2$ to construct any
defining system. Assume we change
all coefficients $\alpha$ by (possibly) $\alpha'$, $\alpha''$,
$\alpha'''$, $\beta$ by $\beta'$,
and finally $\gamma$ by $\gamma'$ getting a new defining system:
$$
\begin{pmatrix}
1&  Y&  \alpha'&  \beta'&  \\
0&  1&   Y^2&  \alpha''&  \gamma' \\
0&  0&    1&    Y&  \alpha''' \\
0&  0&    0&    1&   Y^2 \\
0&  0&    0&    0&    1
\end{pmatrix} .
$$
We have that $\delta(\alpha - \alpha')=0$, so there are $a'$ and $b'$ in $\F_2$ such that
$$
\alpha'=\alpha + a'X^2+b'Y^2.
$$
The same argument applied to $\alpha''$ and $\alpha'''$ gives us
the relations 
$$\alpha''=\alpha + a''X^2+b''Y^2 
\text{ and }
\alpha'''=\alpha + a'''X^2+b'''Y^2,
$$ 
for $a''$, $a'''$, $b''$ and $b'''$ elements in $\F_2$.

Now $\delta(\beta-\beta')=(Y\alpha''+\alpha'Y)-(Y\alpha+\alpha Y)=
Y(\alpha''-\alpha)+(\alpha'-\alpha)Y=Y(a''X^2+b''Y^2)+(a'X^2+b'Y^2)Y=(a''+a')YX^2+(b''+b')Y^3$. As
$YX^2\neq 0 \in H^*(BQ_{2^n})$ we get that $a''=a'$ and there are possibly $a^{(iv)}$,
$b^{(iv)}$ and $c^{(iv)}$ in $\F_2$ such that 
$$
\beta'= \beta + a^{(iv)}X^2 + b^{(iv)}Y^2 + c^{(iv)} \alpha .
$$
A similar expansion of the expression $\delta(\gamma-\gamma')=(Y\alpha'''+\alpha''Y)-(Y\alpha+\alpha Y)$
tells us that
$$
\gamma'=\gamma + a^{(v)}X^3+b^{(v)}Y^3.
$$
So, the result of
this defining system differs 
from $V$ by an element which can be written as, after simplifying,
$\alpha (a'''X^2+(b'''+1)Y^2)+(a'X^2+b'Y^2)\alpha$, for $a',a''',b',b''' \in \F_2$. We
use again the Yoneda
cocomplex for all this generators to compute it, and this always give
the zero element in cohomology, so
$$
\langle Y, Y^2, Y , Y^2 \rangle = \{V\} .
$$
\end{proof}

\begin{lemma}\label{MasseySD}
Let $SD_{2^n}$ be a semidihedral group of order $2^n$, with $n\geq 4$.
Fix $X$, $Y$, $U$ and $V$ the
generators in $H^*(SD_{2^n})$ as in Equation \eqref{notationHG}. Then
$$
\langle X,X^2,Y \rangle = \{U, U+Y^3\}
\text{ and }
\langle X,X^2,X,X^2 \rangle=\{V, V+YU\}.
$$
\end{lemma}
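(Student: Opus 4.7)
The plan is to follow the strategy of Lemma \ref{MasseyQ}, working in the Yoneda cocomplex $\Hom^{(\bullet)}_{\F_2[SD_{2^n}]}(P_\bullet,P_\bullet)$ for the projective resolution $P_\bullet \to \F_2$ built in Appendix B. Cochain representatives of $X$, $Y$, $U$, and $V$ are also supplied there; composition produces representatives of $X^2 = XY$, $Y^3$ and any other monomials that will appear. Both Massey products in the statement are defined because $X^3 = 0$ and $X^2 = XY$ force $X \cdot X^2 = 0$ and $X^2 \cdot Y = X^3 = 0$.

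For $\langle X, X^2, Y\rangle$ I would construct a defining system by two successive liftings: an $\alpha$ with $\delta\alpha$ representing $X \cdot X^2$ and a $\beta$ with $\delta\beta$ representing $X^2 \cdot Y$, both realized explicitly in the Yoneda cocomplex. The value $X\cdot\beta + \alpha\cdot Y$ of the product is then identified with the representative of $U$ from Appendix B by direct composition. The indeterminacy of a triple product in degrees $(1,2,1)$ is $X\cdot H^2(BSD_{2^n}) + H^2(BSD_{2^n})\cdot Y$. Using the basis $\{X^2, Y^2\}$ of $H^2(BSD_{2^n})$ together with the defining relations, one finds $X\cdot X^2 = X^3 = 0$, $X\cdot Y^2 = X^2 Y = X^3 = 0$, $X^2 \cdot Y = X^3 = 0$ and $Y^2\cdot Y = Y^3$, so the indeterminacy is exactly $\{0, Y^3\}$ and $\langle X, X^2, Y\rangle = \{U, U+Y^3\}$ follows.

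For $\langle X, X^2, X, X^2\rangle$ I would extend this construction to a defining system on a $5 \times 5$ upper triangular array, lifting cochains successively for the entries $m_{1,3}$, $m_{2,4}$, $m_{3,5}$ (first inner diagonal) and then $m_{1,4}$, $m_{2,5}$ (second inner diagonal), each time matching the prescribed coboundary. Reading off the cocycle $X\cdot m_{2,5} + m_{1,3}\cdot m_{3,5} + m_{1,4}\cdot X^2$ in the Yoneda cocomplex and simplifying via Appendix B identifies its class with $V$. To pin down the indeterminacy I mimic the end of the proof of Lemma \ref{MasseyQ}: each intermediate cochain is replaced by itself plus an arbitrary cocycle, the compatibility conditions $\delta m_{i,j} = \sum_{k} m_{i,k}\cdot m_{k,j}$ cut down the admissible perturbations to classes in $H^2(BSD_{2^n})$ satisfying certain linear relations, and the change in the final cocycle is expanded modulo coboundaries. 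Exploiting $X\cdot H^2 = 0$, $X^3 = 0$ and $XU = 0$, every term collapses except a single $Y\cdot U$ contribution, which accounts for the extra element $V + YU$.

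The main obstacle is this last indeterminacy bookkeeping: several cochains are perturbed simultaneously, their perturbations are linked by the defining-system equations, and one must apply the ring relations of $H^*(BSD_{2^n})$ systematically to check that the only surviving class modulo $V$ is $YU$. Everything else is a straightforward, if lengthy, cochain-level calculation modelled on the explicit computation carried out in Lemma \ref{MasseyQ}.
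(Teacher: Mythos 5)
Your proposal follows essentially the same route as the paper: explicit liftings in the Yoneda cocomplex of Appendix B to build the defining systems, identification of the resulting cocycles with the representatives of $U$ and $V$ from Lemma \ref{lemma:genHSD}, the standard formula $X\cdot H^2 + H^2\cdot Y$ (May's Proposition 2.3) for the triple-product indeterminacy, and a perturbation analysis of the defining system, transplanted from Lemma \ref{MasseyQ}, to show that only $YU$ survives in the $4$-fold indeterminacy. The paper additionally exploits the cochain-level coincidences $X^3 = X^2Y$ and $X\alpha+\alpha X = X^3$, $X^2\alpha+\alpha X^2=0$ to take $\beta=\alpha$ and $\gamma=0$, which shortens the computation but does not change the argument.
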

\begin{proof}
Here we use the projective resolution $(P_\bullet, \partial_\bullet)$ from Lemma \ref{lemmaprojresSD}
and the properties of the representatives of $X$, $Y$, $U$ and $V$ in the Yoneda listed in
Lemma \ref{lemma:genHSD}.

The thread of the proof is the same as the one in Lemma \ref{MasseyQ}. 
To make it shorter and clearer we will focus on the entries of the matrices as
$\F_2[Q_{2^n}]$-morphisms in the Yoneda cocomplex which define
the elements we are interested in.

We must start checking that $U$ or $U+Y^2$ are in $\langle X,X^2,Y\rangle$. 
By Lemma \ref{lemma:genHSD}, this reduces to see that
the first coordinate, as $\F_2[Q_{2^n}]$-morphism $P_3 \to P_0$ in the Yoneda cocomplex, 
in a explicit defining system of this iterated Massey product is non zero. We need to compute the
first degrees of cochain maps $\alpha$ and $\beta$ fitting in the following defining system:
$$
\begin{pmatrix}
1 & X & \alpha & \\
0 & 1 & X^2 & \beta \\
0 & 0 & 1& Y
\end{pmatrix} .
$$
\begin{enumerate}[(i)]
\item The $\F_2[SD_{2^n}]$-morphism $\alpha$  must fulfill 
$\delta \alpha=X^3$. A direct computation
gives us that we can take $\alpha_i\colon P_{i+2}\to P_{i}$ as: 
$$
\alpha_0=(1\, 0\, 0) \text{ and } \alpha_1=\left(\begin{matrix}
1+I^{2^{n-1}-3} & t(L+I^{2^{n-2}-2}) & x^2I^{2^{n-1}-5} & 0 \\ 0 &
1 & 0 & 0
\end{matrix} \right) .
$$
\item Observe that $\delta$ must fulfill $\delta \beta = X^2 Y$ and  we have the relation 
$X^3=X^2Y$, so we can take $\alpha=\beta$. 
\end{enumerate}
So the resulting element in $\langle X, X^2,Y\rangle$ of this defining system is 
$X \alpha + \alpha Y=X \alpha + \beta Y$, and we just need to compute the evaluation
of the morphism at level $0$:
$$
\varepsilon (\alpha_0 X_2 + Y_0\alpha_1)=(1\,0\,1\,0) ,
$$ 
as a map from $P_3$ to $\F_2$. By Lemma \ref{lemma:genHSD}, this element corresponds either to $U+Y^3$ or $U$.

Now we must deal with the indeterminacy in this $3$-fold iterated Massey products
to get the final result.
Applying \cite[Proposition 2.3]{May}, in a $3$-fold iterated Massey product, two
elements in
$\langle X , X^2, Y \rangle$ differ by an element of the form
$\langle Z,Y \rangle + \langle X, Z' \rangle$, for $Z, Z'$ elements in
$H^2(BSD_{2^n})$. As
$Y^3$ is the only element which can be constructed in this way we get
$$
\langle X,X^2,Y \rangle = \{ U, U+Y^3\} .
$$

We need now to compute one element in $\langle X,X^2,X,X^2\rangle$. 
Again, the computations will focus on the information which tell us that
either $V$ or $V+YU$ belongs to this $4$-fold iterated Massey product: that is, by Lemma 
\ref{lemma:genHSD}, there is an element
of the form $(1\, \ast\, \ast\, \ast\, 0)$ in $\langle X,X^2,X,X^2\rangle$, seen
as a map from $P_4$ to $\F_2$ in the Yoneda cocomplex.

Here we use the same procedure as in the
proof of Lemma \ref{MasseyQ}, using the matrices $X_i$ described above. We need $\alpha$,
$\beta$ and $\gamma$ in a defining system:
$$
\begin{pmatrix}
1&  X&  \alpha&  \beta&  \\
0&  1&   X^2&  \alpha&  \gamma \\
0&  0&    1&    X&  \alpha \\
0&  0&    0&    1&   X^2 \\
0&  0&    0&    0&    1
\end{pmatrix}.
$$
So the relations we must care about are $\delta \alpha=X^3$,
$\delta\beta=X\alpha+\alpha X$ and $\delta\gamma=X^2\alpha+\alpha X^2$.

By definition, $\alpha$ may be taken the same $\alpha$ considered
in the computation of $\langle X,X^2,Y\rangle$. Moreover, as
$X\alpha+\alpha X$ gives the same representative as $X^3$ in the
Yoneda cocomplex, we can take also $\beta=\alpha$. Finally, this
election of $\alpha$ makes $X^2\alpha+\alpha X^2$ to be the zero
element, so we can take $\gamma=0$.

Now we can proceed in the computation of the first coordinate of
$X\gamma + \alpha^2 + \beta X^2$:
$$
\varepsilon (0+\alpha_0 \alpha_2+X_0X_1\alpha_2) .
$$
We just need the first and last columns of $\alpha_2$, which we can see that have $(1,0,0)$ and $(0,0,0)$
as coefficients respectively. As the result of the computations is the sum of the first two rows,
we get that the result is of the form $(1 \, \ast \, \ast \, \ast\, 0)$ as
a map from $P_ 4$ to $\F_2$, obtaining either $V$ or $V+YU$ by Lemma \ref{lemma:genHQ}.

We can also use the computations in the indeterminacy of the $4$-fold Massey iterated product in Lemma 
\ref{MasseyQ} to get a formula for the indeterminacy in this $4$-fold iterated Massey product. 
Two elements in $\langle
X,X^2,X,X^2 \rangle$ differ
by an element which can be expressed as
$\alpha(aX^2+bY^2)+(a'X^2+b'Y^2)\alpha$, with
$\alpha$ a cochain such that $\delta \alpha = X^3$ and $a,b,a',b' \in
\F_2$. Any element
of this form gives a map $\F_2[G]^5 \to \F_2[G] \to \F_2$ with zeros in the
first and last
coordinates and the coefficient $a'$ in the second. So just the element
$YU$ is in the indeterminacy,
getting that:
$$
\langle X,X^2,X,X^2 \rangle=\{V, V+YU\}.
$$
\end{proof}

%%%%
% Section 5: Cohomological uniqueness
%%%%

\section{Cohomological uniqueness}
This section is divided in 3 subsections with the same structure, 
one for each family of $2$-groups of maximal nilpotency class. 

\subsection{Dihedral groups}
Consider $D_{2^n}$ the dihedral group of order $2^n$, and its cohomology for $n\geq 3$
as defined in Equations \eqref{notationG} and \eqref{notationHG}.

The following lemma uses the fact that, for any topological space $\X$, $H^*(\X)$ is an unstable
algebra over the Steenrod algebra, but the lemma does not
require that this structure must be the same as the one in $H^*(BD_{2^n})$:
\begin{lemma}\label{isoorzeroD}
Let $\X$ be an space such that $H^*(\X)\cong H^*(BD_{2^n})$ as algebras.
Let $\phi \colon \X \to BD_{2^n}$ be a map inducing the identity in degree one
cohomology. Then either $\phi^*$ is an isomorphism or $\phi^*(W)=0$.
\end{lemma}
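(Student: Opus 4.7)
The plan is to exploit the natural unstable $\Sq{1}$-action on $H^*(\X)$ (which exists because $\X$ is a space) together with the stated relation $\Sq{1} W = WY$ in $H^*(BD_{2^n})$. One should note that the algebra isomorphism $H^*(\X) \cong H^*(BD_{2^n})$ is \emph{not} assumed to preserve Steenrod operations, so only the intrinsic $\Sq{1}$ on $\X$ and the naturality identity $\Sq{1}\circ \phi^* = \phi^*\circ \Sq{1}$ will be invoked.

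First, because $\phi^*$ is the identity in degree one, we have $\phi^*(X)=X$ and $\phi^*(Y)=Y$. The degree-two part of $H^*(BD_{2^n})$ has additive basis $\{X^2, Y^2, W\}$ (recall $X^2 = XY$), so
$$
\phi^*(W) = aX^2 + bY^2 + cW
$$
for unique $a, b, c \in \F_2$. If $c = 1$, then $X$, $Y$, and $\phi^*(W) = W + aX^2 + bY^2$ generate the same subalgebra of $H^*(\X)$ as $X$, $Y$, $W$, so $\phi^*$ is surjective; being a degree-preserving surjection between graded vector spaces of equal finite dimension in each degree, it is automatically an isomorphism. Hence only the case $c = 0$ remains, and in that case it suffices to show $a = b = 0$.

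To handle $c=0$, apply naturality of $\Sq{1}$:
$$
\Sq{1}\bigl(\phi^*(W)\bigr) = \phi^*\bigl(\Sq{1} W\bigr) = \phi^*(WY) = \phi^*(W)\cdot Y.
$$
On $H^*(\X)$ unstability forces $\Sq{1}X = X^2$ and $\Sq{1}Y = Y^2$, so the Cartan formula gives $\Sq{1}(X^2) = \Sq{1}(Y^2) = 0$; since $c=0$ the left-hand side vanishes. The right-hand side equals $aXY^2 + bY^3$, and since $XY^2$ and $Y^3$ are linearly independent in $H^3(BD_{2^n})\cong H^3(\X)$, we conclude $a = b = 0$, i.e.\ $\phi^*(W) = 0$. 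The crux is this naturality comparison; the rest is linear algebra in low degrees. The only mild pitfall is to remember that the algebra isomorphism $H^*(\X)\cong H^*(BD_{2^n})$ need not commute with Steenrod squares, so the argument must go through $\phi^*$ and the unstability constraints on $\X$, not through the chosen isomorphism.
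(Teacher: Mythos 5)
Your proof is correct and follows essentially the same route as the paper: reduce to the case where $W$ is not in the image (so $\phi^*(W)=aX^2+bY^2$), then apply naturality of $\Sq{1}$ together with $\Sq{1}W=WY$ and the vanishing of $\Sq{1}$ on squares to force $a=b=0$. Your explicit handling of the coefficient $c$ and the remark about not using the (unassumed) compatibility of the abstract algebra isomorphism with Steenrod operations are just slightly more detailed versions of what the paper does.
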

\begin{proof}
According to the hypothesis, $\phi^*$ is an isomorphism if and
only if $W$ is in the image of $\phi^*$. In other words $\phi^*$
is not an isomorphism if and only if $\phi^*(W)=aX^2+bY^2$ for
$a,b \in \F_2$. Assume then that $\phi^*(W)=aX^2+bY^2$. Applying
$\Sq{1}$ in both sides we get $\Sq{1}(W)=WY$ and
$\Sq{1}(aX^2+bY^2)=0$. So
$0=\Sq{1}(\phi^*(W))=\phi^*(\Sq{1}(W))=(aX^2+bY^2)Y$ which implies
$a=b=0$.
\end{proof}

\begin{theorem}\label{theoremD}
$BD_{2^n}$ is determined by its $\M$-cohomology.
\end{theorem}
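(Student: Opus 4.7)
The plan is to construct a comparison map $\phi\colon \X \to BD_{2^n}$ which induces the given $\M$-isomorphism on first cohomology, and then to apply Lemma \ref{isoorzeroD} together with the Massey product nonvanishing of $W$ to conclude that $\phi^*$ is an algebra isomorphism; since both $\X$ and $BD_{2^n}$ are $2$-complete of $CW$ type with finite $\F_2$-cohomology in each degree, mod-$2$ Whitehead will then promote $\phi$ to a homotopy equivalence. Write $\varphi\colon H^*(BD_{2^n};\F_2) \to H^*(\X;\F_2)$ for the given $\M$-isomorphism and set $\tilde X = \varphi(X)$, $\tilde Y = \varphi(Y) \in H^1(\X)$.

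To build $\phi$, I would lift inductively along the tower of principal fibrations \eqref{fibration}. At the base, the pair $(\tilde X, \tilde Y)$ determines $\phi_2\colon \X \to BD_4 = K(\F_2,1)^2$. The first obstruction equals $\phi_2^*(X^2+XY) = \varphi(X^2+XY) = 0$ since $X^2+XY=0$ in $H^*(BD_{2^n})$ for $n\geq 3$, so a lift $\phi_3\colon \X \to BD_8$ exists. For $3\leq k<n$, assuming $\phi_k\colon \X \to BD_{2^k}$ has been constructed with $\phi_k^*(X)=\tilde X$, $\phi_k^*(Y)=\tilde Y$, the obstruction to extending to $BD_{2^{k+1}}$ is $\phi_k^*(W_k) \in H^2(\X)$. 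By Lemma \ref{MasseyProdD}(b) applied to $D_{2^k}$, the class $W_k$ belongs to the $2^{k-1}$-fold Massey product $\langle X,X+Y,\ldots,X,X+Y\rangle$ in $H^*(BD_{2^k})$; by Massey product naturality and the $\M$-isomorphism, $\phi_k^*(W_k)$ lies in the $\varphi$-image of the corresponding $2^{k-1}$-fold Massey product in $H^*(BD_{2^n})$. Since $2^{k-1}<2^{n-1}$, Lemma \ref{MasseyProdD}(a) forbids that Massey product from containing any class of the form $W$, $W+Y^2$, or $W+X^2$, so its candidate values lie in $\F_2\{X^2,Y^2\}$ (modulo $X^2=XY$). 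Hence $\phi_k^*(W_k) \in \F_2\{\tilde X^2, \tilde Y^2\}$. Exploiting that the set of lifts of $\phi_k$ is a principal homogeneous set over $[\X, B\Z/2]=H^1(\X;\F_2)$, one modifies the choice of lift (possibly backtracking to an earlier stage) to kill the obstruction. Iterating to $k=n$ yields $\phi = \phi_n\colon \X \to BD_{2^n}$.

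With $\phi$ in hand, Lemma \ref{isoorzeroD} offers the dichotomy: either $\phi^*$ is an algebra isomorphism (and we are finished) or $\phi^*(W)=0$. The latter is ruled out by the Massey structure: applying Lemma \ref{MasseyProdD}(b) to $D_{2^n}$ itself, $W$ lies in the $2^{n-1}$-fold product $\langle X,X+Y,\ldots,X,X+Y\rangle$ in $H^*(BD_{2^n})$; by naturality (property (iii) of Massey products) and the $\M$-isomorphism, $\phi^*(W)$ lies in $\langle\tilde X, \tilde X+\tilde Y,\ldots\rangle = \varphi(\langle X,X+Y,\ldots\rangle)$ inside $H^*(\X)$. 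Lemma \ref{MasseyProdG}(a) asserts that the $2^{n-1}$-fold product in $H^*(BD_{2^n})$ does not contain zero, so neither does its $\varphi$-image, and therefore $\phi^*(W)\neq 0$. Consequently $\phi^*$ is an isomorphism, $\phi$ is a mod-$2$ cohomology equivalence, and $\phi\colon \X \simeq BD_{2^n}$ by the mod-$2$ Whitehead theorem for $2$-complete $CW$-spaces.

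The main obstacle is the inductive step that produces $\phi_{k+1}$ from $\phi_k$: although the Massey product confines the obstruction $\phi_k^*(W_k)$ to the two-dimensional subspace $\F_2\{\tilde X^2, \tilde Y^2\}$ of $H^2(\X)$, realizing its actual vanishing requires a precise analysis of how $\phi_k^*(W_k)$ transforms under the $H^1(\X;\F_2)$-torsor action on lifts. One must verify that the quadratic corrections introduced by modifying a lift by a class $\lambda\in H^1(\X;\F_2)$ span $\F_2\{\tilde X^2,\tilde Y^2\}$ so that any obstruction lying in this subspace can be absorbed by an appropriate choice of lift.
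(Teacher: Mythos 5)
Your overall architecture is the paper's: lift along the tower \eqref{fibration}, identify the obstruction at stage $k$ with $\phi_k^*(W)$, control it with Massey products, and finish with Lemma \ref{MasseyProdG}(a) plus Lemma \ref{isoorzeroD}. The base case, the final non-vanishing of $\phi_n^*(W)$, and the passage from cohomology isomorphism to homotopy equivalence are all fine (your last step is even a slight streamlining of the paper's, which instead derives a contradiction from a hypothetical further lift to $BD_{2^{n+1}}$ using Lemma \ref{MasseyProdG}(b)). You do, however, omit the case $n=2$, where there is no class $W$ and the tower argument does not start; the paper disposes of it via Proposition \ref{example:elementaryabelian}.

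The genuine gap is exactly the one you flag yourself: in the inductive step you only confine $\phi_k^*(W)$ to the subspace $\F_2\{\tilde X^2,\tilde Y^2\}$ and then propose to kill it by acting on the lifts with the $H^1(\X;\F_2)$-torsor. That plan is both unverified and structurally awkward: the obstruction to lifting $\phi_k$ depends on $\phi_k$ itself, not on a choice of lift of $\phi_k$, so you would have to backtrack and recompute how $(\lambda\cdot\phi_{k})^*(W)$ differs from $\phi_{k}^*(W)$ via the coaction of $H^*(B\Z/2)$ on $W$ --- a computation you have not done and which the paper never needs. The missing observation is that Lemma \ref{isoorzeroD} applies at \emph{every} stage $k\geq 3$, not just at $k=n$: since $H^*(BD_{2^k})\cong H^*(BD_{2^n})\cong H^*(\X)$ as algebras for all $k\geq 3$, the lemma (whose proof is the $\Sq{1}$ computation $0=\Sq{1}(\phi_k^*(W))=\phi_k^*(WY)$) says that $\phi_k^*(W)$ is either a class hitting the $W$-generator of $H^*(\X)$ or is exactly $0$; there is no middle ground $aX^2+bY^2\neq 0$. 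Your Massey product argument (Lemma \ref{MasseyProdD}(a) transported through the $\M$-isomorphism, since $2^{k-1}<2^{n-1}$) rules out the first alternative, so $\phi_k^*(W)=0$ on the nose and the lift exists with no modification of choices. With that substitution your induction closes and the proof is complete.
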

\begin{proof}
Fix $\X$ a $2$-complete topological space having the homotopy type
of a CW-complex and $\M$-comparable to $BD_{2^n}$.

For $n=2$, $D_4\cong\Z/2 \times \Z/2$ and the result follows from
Proposition \ref{example:elementaryabelian}.

Assume that $n \geq 3$. Then we should give a map $\phi_n \colon \X \to BD_{2^n}$ inducing an
isomorphism in cohomology
up to degree $2$. Consider the tower of principal fibrations in Equation \eqref{fibration},
where each $\pi_k$ corresponds to the central extension:
$$
1 \to \Z/2 \to D_{2^{k+1}} \to D_{2^k} \to 1
$$
classified either by $X^2+XY$ if $k=2$ or by $W$ when $k>2$.

Consider $\phi_2 \colon \X \to BD_4$, a map representing the classes $X$ and $Y$. The composite
$$
\X \stackrel{\phi_2}{\longrightarrow}BD_4 \stackrel{X^2+XY}{\longrightarrow} K(\Z/2,2)
$$
is nullhomotopic, so $\phi_2$ factors as a composition $\pi_2 \circ \phi_3$, with
$\phi_3 \colon \X \to BD_8$.

Now, if we assume that $\phi_k \colon \X \to BD_{2^k}$ inducing
the identity in $H^1$ is defined, this map will extend to a map
$\phi_{k+1} \colon \X \to BD_{2^{k+1}}$ if and only if
$\phi_k^*(W)=0$. Using Lemmas \ref{minima_m} and \ref{isoorzeroD}, and
the fact that $(2^k-2)$-fold iterated Massey products must fulfill
$$ 
\phi_k^*(\langle X,X+Y,X,X+Y,\cdots \rangle_{H^*(BD_{2^k})})\subset \langle
X,X+Y,X,X+Y, \cdots \rangle_{H^*(\X)}
$$ 
we get
that $\phi_k^*(W)=0$ for all $k< n$ (the subscripts in the formula
indicate the algebra where the $(2^k-2)$-fold iterated Massey products are
considered). So, the map $\phi_k$ extends to $\phi_{k+1}\colon \X
\to BD_{2^{k+1}}$ for $k<n$.

It remains to check the last step: if $\phi_n(W)=0$, then it extends to $\phi_{n+1}$.
Such a map $\phi_{n+1} \colon \X \to BD_{2^{n+1}}$ inducing
the identity in $H^1$ cannot exist because the $2^{n+1}$-fold iterated Massey product 
$\langle X,X+Y,X,X+Y,\cdots\rangle$
is defined in
$H^*(BD_{2^{n+1}})$ and it is not defined in $H^*(\X)$ by Lemma \ref{minima_m}.

So $\phi_{n}^*(W)\neq 0$, and by Lemma \ref{isoorzeroD},
$\phi_n$ is an isomorphism in cohomology and $\X\simeq BD_{2^n}$.
\end{proof}

%%%%%%%%%%%%%%%%%%%%%
% Quaternion
%%%%%%%%%%%%%%%%%%%%%

\subsection{Quaternion groups}

\begin{lemma}\label{imagew}
Consider the notation in Equation \eqref{notationHG} for the cohomology of the dihedral
and quaternion groups.
Let $\X$ be an space $\M$-comparable to $BQ_{2^n}$, and $\phi \colon \X \to
BD_{2^r}$ be a map such that $\phi^*(X)=X$ and $\phi^*(Y)=Y$. Then $\phi^*(W)=0$ or $\phi^*(W)=Y^2$.
\end{lemma}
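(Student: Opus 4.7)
The plan is to mimic the strategy of Lemma \ref{isoorzeroD}, adapting it to the different algebra structure of $H^*(BQ_{2^n})$.

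First, I would observe that under the $\M$-isomorphism the degree-two part of $H^*(\X)$ is two-dimensional with basis $\{X^2,Y^2\}$. Indeed, inspecting Equation \eqref{notationHG}, the defining relation in degree two ($X^2+XY+Y^2=0$ when $n=3$, or $X^2+XY=0$ when $n\geq 4$) eliminates $XY$ in terms of $X^2$ and $Y^2$ in both cases. Since $\phi^*(X)=X$ and $\phi^*(Y)=Y$ we may therefore write
$$\phi^*(W)=aX^2+bY^2,\qquad a,b\in\F_2,$$
and the task reduces to ruling out $a=1$.

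Next I would apply $\phi^*$ to the Steenrod identity $\Sq{1}(W)=WY$, which holds in $H^*(BD_{2^r})$ for $r\geq 3$ (as recalled immediately after Equation \eqref{notationHG}). Since $\phi$ is a continuous map, $\phi^*$ commutes with Steenrod operations, so
$$\phi^*(W)\cdot Y \;=\; \phi^*(WY) \;=\; \phi^*(\Sq{1}W) \;=\; \Sq{1}(\phi^*(W)).$$
The Cartan formula together with $\Sq{1}X=X^2$ and $\Sq{1}Y=Y^2$ gives $\Sq{1}(X^2)=2X^3=0$ and $\Sq{1}(Y^2)=0$ in characteristic two, so the right-hand side vanishes. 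Thus
$$aX^2Y+bY^3=0 \quad\text{in}\ H^3(\X).$$

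Finally I would check, by direct inspection of $H^3(BQ_{2^n})$ through the presentation in Equation \eqref{notationHG}, that $Y^3=0$ but $X^2Y\neq 0$ in both the $n=3$ and $n\geq 4$ cases. The vanishing of $aX^2Y$ then forces $a=0$, and $\phi^*(W)=bY^2\in\{0,Y^2\}$ as claimed.

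The only potentially delicate step is the degree-three bookkeeping inside $H^*(BQ_{2^n})$: for $n=3$ one has to combine $X^2+XY+Y^2=0$ with $X^2Y+XY^2=0$ to see simultaneously that $Y^3=0$ and that $X^2Y=XY^2$ survives as a nonzero generator of $H^3$, while for $n\geq 4$ the relation $Y^3=0$ is explicit and the nonvanishing of $X^2Y=XY^2$ is immediate. Everything else is standard naturality of Steenrod operations together with the Cartan formula.
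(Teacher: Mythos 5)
Your proof is correct and follows essentially the same route as the paper: write $\phi^*(W)=aX^2+bY^2$ in the basis $\{X^2,Y^2\}$ of $H^2(\X)$, apply $\Sq{1}$ using naturality and the identity $\Sq{1}(W)=WY$ in $H^*(BD_{2^r})$, and conclude $aX^2Y=0$ (hence $a=0$) since $Y^3=0$ and $X^2Y\neq 0$ in $H^*(\X)$. The only difference is that you spell out the degree-two and degree-three bookkeeping (including the $Q_8$ case) in more detail than the paper does.
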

\begin{proof}
Recall $\{X^2,Y^2\}$ is a basis of $H^2(\X)$. Then there are
$a,b\in\F_2$ such that $\phi^*(W)=aX^2+bY^2$. If we apply $\Sq{1}$
to both sides of the identity we get $a X^2Y =0$ (notice $Y^3=0$
in $H^*(\X)$), getting the desired result.
\end{proof}

\begin{theorem}\label{theoremQ}
$BQ_{2^n}$ is determined by its $\M$-cohomology.
\end{theorem}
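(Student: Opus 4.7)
The plan splits by the value of $n$: the case $n=3$ (where the central extension $\Z/2\to Q_8\to(\Z/2)^2$ is short enough to exploit directly) is separated from $n\ge 4$ (where the dihedral tower of Theorem \ref{theoremD} is needed), because $H^*(BQ_8)$ and $H^*(BQ_{2^n})$ for $n\ge 4$ have visibly different presentations.

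For $n=3$, I would take the map $\psi\colon\X\to B(\Z/2)^2$ classifying $(X,Y)\in H^1(\X)^2$. Since the $k$-invariant $X^2+XY+Y^2$ of the central extension $\Z/2\to Q_8\to(\Z/2)^2$ is a relation in $H^*(BQ_8)\cong H^*(\X)$, the map $\psi$ lifts to $\phi\colon\X\to BQ_8$, which is automatically iso on $H^{\le 2}$. For the degree-$4$ generator $V$, Lemma \ref{MasseyQ} gives $V\in\langle Y,Y^2,Y,Y^2\rangle$ in $H^*(BQ_8)$; the $\M$-comparability hypothesis pulls this product back to the singleton $\{V\}$ in $H^*(\X)$, forcing $\phi^*(V)=V$. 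Hence $\phi^*$ is an algebra isomorphism on all generators, so iso overall, and $\phi$ is a homotopy equivalence of $2$-complete CW-complexes.

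For $n\ge 4$ the central extension $\Z/2\to Q_{2^n}\to D_{2^{n-1}}$ is classified by $W+Y^2\in H^2(BD_{2^{n-1}})$, so my plan is to first build $\psi_{n-1}\colon\X\to BD_{2^{n-1}}$ satisfying $\psi_{n-1}^*(W)=Y^2$, and then lift to $\phi\colon\X\to BQ_{2^n}$. Construct $\psi_k\colon\X\to BD_{2^k}$ iteratively for $k=2,\dots,n-1$ following Theorem \ref{theoremD}: $\psi_2$ classifies $(X,Y)$, the first extension is unobstructed because $X^2+XY=0$ in $H^*(\X)$, and Lemma \ref{imagew} restricts $\psi_k^*(W)\in\{0,Y^2\}$ at each subsequent stage. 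The key observation (which handles both the intermediate steps and the key subcase of the final one) is that whenever $\psi_k^*(W)=Y^2$ for $3\le k\le n$, the map $\psi_k$ lifts along $Q_{2^{k+1}}\to D_{2^k}$ to $\tilde\phi\colon\X\to BQ_{2^{k+1}}$; this lift is iso on $H^{\le 2}$ by construction, and Lemma \ref{MasseyQ} applied to the degree-$4$ generator of $H^*(BQ_{2^{k+1}})$ together with $\M$-comparability forces it to be iso on $V$ too, hence iso on the whole cohomology, and therefore a homotopy equivalence of $2$-complete CW-complexes. This yields $\X\simeq BQ_{2^{k+1}}$, so $H^*(BQ_{2^{k+1}})\cong_\M H^*(BQ_{2^n})$, and Lemma \ref{MasseyProdG}(b) (maximum length of an iterated Massey product of the form $\langle X,X+Y,\dots\rangle$) forces $k+1=n$.

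For $k\le n-2$ (where $k+1<n$) this is a contradiction, so $\psi_k^*(W)=0$ and the dihedral-tower extension continues; the same contradiction rules out the subcase $\psi_n^*(W)=Y^2$ that arises if one assumes $\psi_{n-1}^*(W)=0$ and extends to $\psi_n\colon\X\to BD_{2^n}$. The complementary subcase $\psi_n^*(W)=0$ is closed off directly: Lemma \ref{MasseyProdD}(b) places $W$ inside the $2^{n-1}$-fold Massey product $\langle X,X+Y,\dots,X,X+Y\rangle$ in $H^*(BD_{2^n})$, and pulling back via $\psi_n$ forces $0$ into the same product in $H^*(\X)$, contradicting Lemma \ref{MasseyProdG}(a) transported by the $\M$-isomorphism. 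So $\psi_{n-1}^*(W)=Y^2$, the lift $\phi\colon\X\to BQ_{2^n}$ exists, and the iso-on-generators argument (with $V$ supplied again by Lemma \ref{MasseyQ} and $\M$-comparability) shows $\phi$ is a homotopy equivalence. The main obstacle is the combined Massey product bookkeeping at the top of the tower: closing off every alternative value of $\psi_{n-1}^*(W)$ requires simultaneously invoking the $W$-pullback into the $2^{n-1}$-fold product (Lemma \ref{MasseyProdD}(b) versus Lemma \ref{MasseyProdG}(a)) and the ``lift-gives-equivalence'' argument resting on Lemma \ref{MasseyQ}.
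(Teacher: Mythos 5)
Your proposal is correct and follows the same skeleton as the paper's proof: isolate $n=3$, climb the tower of dihedral classifying spaces using Lemma \ref{imagew} to pin $\psi_k^*(W)$ to $\{0,Y^2\}$, kill the wrong branch at each stage with iterated Massey products of degree-one classes, and recover the degree-$4$ generator via Lemma \ref{MasseyQ}. You differ in two local mechanisms. First, at the intermediate stages the paper only notes that a premature lift $\tilde\phi_{k+1}\colon\X\to BQ_{2^{k+1}}$ is an isomorphism up to degree $3$ and invokes Lemma \ref{lemmaMay} to transport the degree-one Massey products before contradicting Lemma \ref{MasseyProdG}; you instead upgrade each premature lift to a full homotopy equivalence (re-running the Lemma \ref{MasseyQ} argument for $V$ at every stage, using that the Poincar\'e series of all the $H^*(BQ_{2^m})$ agree) and then compare the $\M$-cohomologies of $BQ_{2^{k+1}}$ and $BQ_{2^n}$ directly. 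This is heavier per step but more uniform, and it dispenses with Lemma \ref{lemmaMay} entirely in favour of functoriality of Massey products plus $\M$-comparability. Second, in the subcase $\psi_n^*(W)=0$ at the top of the tower, the paper lifts one further level to $BD_{2^{n+1}}$ or $BQ_{2^{n+1}}$ and contradicts the non-definedness of the $2^n$-fold product in $H^*(\X)$; you instead use Lemma \ref{MasseyProdD}(b) to place $W$ in the $2^{n-1}$-fold product of $H^*(BD_{2^n})$, pull it back to $0$ in the corresponding product of $H^*(\X)$, and contradict Lemma \ref{MasseyProdG}(a). Your version of this subcase is arguably tidier, since the definedness of the $2^{n-1}$-fold product in $H^*(BD_{2^n})$ is exactly what Lemma \ref{representations} establishes. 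Both routes lean equally on the same implicit facts about which long products are defined in $H^*(BQ_{2^n})$ and on transporting (non-)definedness across the $\M$-isomorphism, so no gap is introduced that is not already present in the paper's own argument.
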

\begin{proof}
Let $\X$ be a $2$-complete space having the homotopy type of a
CW-complex and $\M$-comparable to $BQ_{2^n}$.

We must consider the cases $n=3$ and $n\neq 3$ separately.

Consider $Q_8$ and its cohomology as in Equations \eqref{notationG} and \eqref{notationHG}.
Let $\phi_2 \colon \X \to B(\Z/2 \times
\Z/2)$ be the map representing the elements $X$ and $Y$ in cohomology. Such a map factors
through $\phi_3\colon \X \to Q_8$ because $\phi_2^*(X^2+XY+Y^2)=0$, so we have a map $\phi_3$ inducing
the identity in $H^1$. Use now Lemma \ref{MasseyQ} to get that it must be an
isomorphism in cohomology, so a homotopy equivalence.

Assume now $n\geq 4$ and that $\X$ is a $\F_2$-complete space
$\M$-comparable to $BQ_{2^n}$. Let $\phi_2 \colon \X \to B(\Z/2
\times \Z/2)$ be the map representing the elements $X$ and $Y$ in
cohomology. Consider the tower of principal fibrations $$ \cdots
\to BD_{2^k} \stackrel{\pi_{k-1}}{\to}BD_{2^{k-1}}
\stackrel{\pi_{n-2}}{\to} \cdots \stackrel{\pi_{3}}{\to} BD_8
\stackrel{\pi_{2}}{\to} B(\Z/2\times\Z/2).$$ Recall that the map
$\pi_2$ is classified by the class $X^2+XY$ and each $\pi_i$ for
$i\geq 3$ is classified by $W$.

Since $\phi_2^*(X^2+XY)=0$, $\phi_2$ lifts to $\phi_3\colon \X \to BD_8$, a map
such that $\phi_3^*(X)=X$ and $\phi_3^*(Y)=Y$.

% Now, by Lemma \ref{imagew},  $\phi_3^*(W)=0$ or $\phi_3^*(W)=Y^2$.
% and $\phi_k^*(W)=0$ we can
% define a map $\phi_{k+1}\colon \X \to BD_{2^{k+1}}$ such that
% $\phi_{k+1}^*(X)=X$ and $\phi_{k+1}^*(Y)=Y$.

Assume now that $k\geq 3$ and $\phi_k\colon \X \to BD_{2^k}$ such
that  $\phi_k^*(X)=X$, $\phi_k^*(Y)=Y$. We show that
$\phi_k^*(W)=0$ when $k<n-1$, which implies that $\phi_k$ lifts to
$\phi_{n-1}\colon \X \to BD_{2^{n-1}}$ such that
$\phi_{n-1}^*(X)=X$, $\phi_{n-1}^*(Y)=Y$.

If $\phi_k^*(W)\neq 0$, then $\phi_k^*(W)=Y^2$ by Lemma \ref{imagew}.
This implies $\phi_k^*(Y^2+W)=0$ and there is a map $\tilde
\phi_{k+1} \colon \X \to BQ_{2^{k+1}}$ which is an isomorphism in
cohomology up to degree $3$. This implies, by Lemma \ref{lemmaMay},
that both $\X$ and $BQ_{2^{k+1}}$ have the same iterated Massey
products involving degree one elements. But by Lemma
\ref{MasseyProdG}, as $k+1<n$ there are iterated Massey products
in $H^*(\X)$ which are not defined in $H^*(BQ_{2^{k+1}})$, getting
a contradiction to the assumption $\phi_k^*(W)\neq 0$.

It remains to see that $\phi_{n-1}$ lifts to $\phi_n\colon \X \to
BQ_{2^n}$, i.e.\ that $\phi_{n-1}(W)=Y^2$: if $\phi_{n-1}(W)\neq
Y^2$ then $\phi_{n-1}(W)=0$ by Lemma \ref{lemmaMay}, and we 
get a map $\tilde \phi_n \colon \X \to D_{2^n}$. If such a
$\tilde\phi_n$ exists, using again by Lemma \ref{lemmaMay}, there
exists either a map $\tilde\phi_{n+1} \colon \X \to
D_{2^{n+1}}$ or a map $\tilde\phi'_{n+1} \colon \X \to
Q_{2^{n+1}}$, but neither $\tilde\phi_{n+1}$ nor
$\tilde\phi'_{n+1}$ can exist because in the cohomology of both
targets there are $2^n$-fold iterated Massey products of type $\langle X,X+Y,X, \dots ,
X+Y\rangle$ which are not defined in $H^*(\X)$ by
Lemma \ref{MasseyProdG}, getting a contradiction.

This implies that we have a map $\phi_n \colon \X \to BQ_{2^n}$
which is the identity in cohomology till degree $3$, and by Lemma
\ref{MasseyQ}, $\phi_n^*(V)=V$. Then $\phi_n^*$ is an isomorphism
and $\phi_n$ is a homotopy equivalence.
\end{proof}

\begin{remark} \label{remark:higherneeded}
Observe that iterated Massey products involving elements in degree greater than one cannot be avoided.
Consider, as in \cite{Benson-BCAT90}, $\X=S^3/Q_{2^n} \times BS^3$ where the action of
$Q_{2^n}$ is by left multiplication
on $S^3$ where one considers $Q_{2^n}$ as a discrete subgroup of the Lie group $S^3$.

The space $\X$ is $2$-good, and using \cite{Benson-BCAT90}, we have that $H^*(\X)\cong H^*(BQ_{2^n})$
as $\F_2$-algebras. Moreover, the iterated products of degree one elements are the same: all the
information about the iterated Massey products of elements in degree one can be read in the three first
steps of a minimal projective resolution of $\F_2[Q_{2^n}]$, and the three steps agree
with the minimal
projective resolution of $C^*(\X,\F_2)$.  Finally, these are not homotopy equivalent spaces up to $2$ completion
because, for example, they have different homotopy groups.
\end{remark}

\subsection{Semidihedral groups}
Consider $SD_{2^n}$ a semidihedral group of order $2^n$ and its cohomology with the notation in
Equations \eqref{notationG} and \eqref{notationHG}.
\begin{lemma}\label{imagewSD}
Let $\X$ be a topological space $\M$-comparable to $BSD_{2^n}$, and $\phi \colon \X \to
BD_{2^r}$ ($r\geq 3$) be a map such that $\phi^*(X)=X$ and $\phi^*(Y)=Y$. Then $\phi^*(W)=0$ or
$\phi^*(W)=X^2$.
\end{lemma}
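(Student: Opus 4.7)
The proof closely parallels that of Lemma~\ref{imagew}. Since $H^*(\X)\cong H^*(BSD_{2^n})$ as algebras and the relation $X^2=XY$ reduces every degree-two monomial in $X$ and $Y$ to a linear combination of $X^2$ and $Y^2$, these two classes form a basis of the subspace of $H^2(\X)$ generated by $X$ and $Y$. Thus we may write $\phi^*(W) = aX^2 + bY^2$ for some $a,b\in\F_2$, and the goal is to force $b=0$; for then $\phi^*(W)\in\{0,X^2\}$, as claimed.

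To force $b=0$, the plan is to exploit the Steenrod algebra action on $H^*(\X)$, which is available because $\X$ is a topological space. Note that no compatibility of $\phi^*$ with the $\A$-module structure on $H^*(BD_{2^r})$ is assumed, only naturality of $\Sq{1}$ under the continuous map $\phi$. Since $\Sq{1}(W)=WY$ in $H^*(BD_{2^r})$ for $r\geq 3$, naturality yields
$$\phi^*(W)\cdot Y = \phi^*(WY) = \phi^*(\Sq{1}(W)) = \Sq{1}(\phi^*(W)) = \Sq{1}(aX^2+bY^2) = 0,$$
where the last equality uses the Cartan formula, which in characteristic two gives $\Sq{1}(X^2)=2X\,\Sq{1}(X)=0$ and similarly $\Sq{1}(Y^2)=0$.

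It remains to unfold the left-hand side in $H^*(\X)$ using the presentation of $H^*(BSD_{2^n})$. Combining $X^2=XY$ with the relation $X^3=0$ gives $X^2Y=X\cdot X^2=X^3=0$, while $Y^3$ is not killed by any of the defining relations $X^2+XY$, $XU$, $X^3$, $U^2+(X^2+Y^2)V$ (an easy inspection of the degree-three part of the ideal shows that it is spanned by $X^3$, $X^2Y$, $XY^2$ and $XU$), so $Y^3\neq 0$ in $H^3(\X)$. Thus $0=(aX^2+bY^2)Y=bY^3$ forces $b=0$, completing the argument. The main point to be careful about, in contrast to the quaternion case, is the asymmetry between $X$ and $Y$ in $H^*(BSD_{2^n})$: it is the extra relation $X^3=0$ (absent for $Q_{2^n}$) that singles out $X^2$ rather than $Y^2$ as the surviving possibility.
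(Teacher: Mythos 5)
Your proof is correct and follows essentially the same route as the paper's: write $\phi^*(W)=aX^2+bY^2$ in the basis $\{X^2,Y^2\}$ of $H^2(\X)$, apply $\Sq{1}$ via naturality under $\phi$ together with $\Sq{1}(W)=WY$, and use the relations $X^2Y=X^3=0$ and $Y^3\neq 0$ in $H^*(BSD_{2^n})$ to conclude $bY^3=0$, hence $b=0$. The paper's version is just a terser rendering of exactly this argument, so nothing further is needed.
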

\begin{proof}
Consider $\{X^2, Y^2\}$ as basis of $H^2(\X)$ as $\F_2$ vector space. Then
there are $a$ and $b$ in $\F_2$ such that $\phi^*(W)=aX^2+bY^2$. If we apply
$\Sq{1}$ to both sides of the identity we get $b Y^3 =0$, getting the desired
result.
\end{proof}

\begin{theorem}\label{theoremSD}
$BSD_{2^n}$ is determined by its $\M$-cohomology.
\end{theorem}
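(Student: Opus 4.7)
The plan is to follow closely the strategy used in the proofs of Theorems \ref{theoremD} and \ref{theoremQ}. Starting from the map $\phi_2\colon \X\to B(\Z/2\times\Z/2)$ representing the classes $X$ and $Y$, I would lift successively through the tower of principal fibrations of dihedral groups
$$\cdots\to BD_{2^k}\stackrel{\pi_{k-1}}{\to} BD_{2^{k-1}}\to\cdots\to BD_8\stackrel{\pi_2}{\to}B(\Z/2\times\Z/2)$$
to obtain a map $\phi_{n-1}\colon\X\to BD_{2^{n-1}}$ inducing the identity on $X$ and $Y$, then use the central extension $\Z/2\to SD_{2^n}\to D_{2^{n-1}}$ (classified by $W+X^2$ according to Section 3) to produce $\phi_n\colon\X\to BSD_{2^n}$, and finally verify via Lemma \ref{MasseySD} that $\phi_n$ is a mod-$2$ cohomology equivalence between $2$-complete CW-spaces.

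For the inductive lifting step, assume $\phi_k\colon\X\to BD_{2^k}$ with $\phi_k^*(X)=X$ and $\phi_k^*(Y)=Y$ is defined for some $3\leq k<n-1$. Lemma \ref{imagewSD} forces $\phi_k^*(W)\in\{0,X^2\}$. If $\phi_k^*(W)=X^2$, then $\phi_k^*(W+X^2)=0$, so $\phi_k$ lifts to $\tilde\phi_{k+1}\colon\X\to BSD_{2^{k+1}}$, which is an isomorphism on cohomology up to degree $2$. Since the iterated Massey product $\langle X,X+Y,\ldots,X,X+Y\rangle$ of length $2^{n-1}$ has total degree $2$, Lemma \ref{lemmaMay} says it is defined in $H^*(\X)=H^*(BSD_{2^n})$ if and only if it is defined in $H^*(BSD_{2^{k+1}})$. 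By Lemma \ref{MasseyProdG} this product is defined in the former and, because $k+1<n$, not defined in the latter; the contradiction forces $\phi_k^*(W)=0$, so the lift $\phi_{k+1}\colon\X\to BD_{2^{k+1}}$ exists.

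Once $\phi_{n-1}$ is obtained, the lift to $\phi_n\colon\X\to BSD_{2^n}$ requires $\phi_{n-1}^*(W)=X^2$. If instead $\phi_{n-1}^*(W)=0$, then $\phi_{n-1}$ lifts to $\tilde\phi_n\colon\X\to BD_{2^n}$, and applying Lemma \ref{imagewSD} once more forces a further lift either to $BD_{2^{n+1}}$ or to $BSD_{2^{n+1}}$. Both targets carry a defined $2^n$-fold Massey product $\langle X,X+Y,\ldots\rangle$ (by Lemma \ref{MasseyProdD}(b) for the dihedral target, and by the analogous representation-theoretic argument from Appendix A for the semidihedral target), while in $H^*(\X)$ Lemma \ref{MasseyProdG}(b) forbids any such product of length exceeding $2^{n-1}$; Lemma \ref{lemmaMay} yields the desired contradiction. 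Hence $\phi_n$ exists. Since $\phi_n^*$ restricts to the identity on $H^1(\X)$ and therefore on $H^2(\X)$, the functoriality property (iii) of iterated Massey products combined with Lemma \ref{MasseySD} gives
$$\phi_n^*(U)\in\langle X,X^2,Y\rangle=\{U,U+Y^3\}\quad\text{and}\quad\phi_n^*(V)\in\langle X,X^2,X,X^2\rangle=\{V,V+YU\}$$
inside $H^*(\X)$. In either case $\phi_n^*$ hits both $U$ and $V$ modulo elements already in its image, so it is surjective on the generators $\{X,Y,U,V\}$ of $H^*(BSD_{2^n})$ and hence an algebra isomorphism; thus $\phi_n$ is a mod-$2$ cohomology equivalence between $2$-complete CW-spaces, and $\X\simeq BSD_{2^n}$.

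The step most likely to require care is ruling out the branch $\phi_{n-1}^*(W)=0$, since then one must track a further bifurcation (dihedral versus semidihedral lift) and confirm that in each branch the target classifying space really admits a $2^n$-fold Massey product of the required form. The semidihedral case is not an immediate quotation of Section 4 but follows from the representation-theoretic bounds of Appendix A, and making this explicit will be the main technical point beyond what is already done in the dihedral and quaternion arguments.
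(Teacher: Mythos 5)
Your proposal is correct and follows essentially the same route as the paper's own proof: the same inductive lifting through the dihedral tower using Lemma \ref{imagewSD} and the Massey-product obstructions of Lemmas \ref{lemmaMay} and \ref{MasseyProdG}, the same exclusion of the branch $\phi_{n-1}^*(W)=0$ by pushing one step further to $BD_{2^{n+1}}$ or $BSD_{2^{n+1}}$, and the same use of Lemma \ref{MasseySD} to see that $U$ and $V$ lie in the image of $\phi_n^*$. The only point where you are more cautious than necessary is the semidihedral target in the last exclusion: Lemma \ref{MasseyProdG} is already stated for all three families, so the paper simply cites it together with Lemma \ref{MasseyProdD} rather than rerunning the Appendix A argument.
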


\begin{proof}
Fix $\X$ a $2$-complete space having the homotopy type of a $CW$-complex and
$\M$-comparable to $BSD_{2^n}$ $(n\geq 4)$.

Consider $\phi_2\colon \X \to BD_4$ a map representing $X$ and $Y$. As $\phi_2^*(X^2+XY)=0$, this map will
factor through a map $\phi_3\colon \X \to BD_8$ such that $\phi^*_3\colon H^1(BD_8) \to H^1(\X)$
is the identity.

Assume now that we have a map $\phi_k \colon \X \to BD_{2^k}$ which induces the identity in $H^1$ and
with $k<n-1$. Then, by Lemma \ref{imagewSD}, $\phi^*_k(W)=X^2$ or $\phi^*_k(W)=0$.

In the first case,
$\phi^*_k(W+X^2)=0$, so there is a map $\tilde\phi_{k+1}\colon \X \to BSD_{2^{k+1}}$ which is the
identity
in cohomology in degrees one and two. So, by Lemma \ref{lemmaMay}, they must have the same iterated Massey
products of degree one elements. But, by Lemma \ref{MasseyProdG}, if $k+1<n$ there are
$2^{n-1}$-fold iterated Massey products which are defined in the cohomology of $\X$, but not in
$H^*(BSD_{2^{b+1}})$, and this contradicts Lemma \ref{lemmaMay}.

So we are in the second case and we have
a map $\phi_{k+1} \colon \X \to BD_{2^{k+1}}$ inducing the identity in $H^1$.
This procedure can be done up to
$\phi_{n-1} \colon \X \to BD_{2^{n-1}}$ inducing the identity in $H^1$.
Again, by Lemma \ref{imagewSD}, $\phi^*_{n-1}(W)=0$ or $\phi^*_{n-1}(W)=X^2$.

In the first case, we obtain a map $\tilde\phi_n \colon \X \to BD_{2^n}$ inducing the identity
in $H^1$. By the previous
arguments such a map induces a map either $\tilde\phi_{n+1}\colon \X \to BD_{2^{n+1}}$ or
$\tilde\phi_{n+1}\colon \X \to BSD_{2^{n+1}}$ inducing the identity in $H^1$, and this cannot happen
because Lemmas \ref{MasseyProdD}
and \ref{MasseyProdG} imply that there are iterated Massey products of degree one elements defined in
$H^*(BD_{2^{n+1}})$ and $H^*(BSD_{2^{n+1}})$ which are
not defined in $H^*(\X)$.

So $\phi^*_{n-1}(W)=X^2$ and we have a map $\phi_n \colon \X \to BSD_{2^n}$ inducing the identity in $H^1$.
Now we use now Lemma \ref{MasseySD} to see that it must be an isomorphism in cohomology, and therefore a homotopy
equivalence: as $\phi_n^*(\{U,U+Y^3\})=\phi_n^*(\langle X,X^2,Y\rangle)\subset
\langle X,X^2,Y \rangle = \{U,U+Y^3\}$ we get that either $\phi_n^*(U)=U$ or $\phi_n^*(U+Y^3)=U$, so
$U$ is in the image of $\phi_n^*$. Using the same argument applied
to $\langle X,X^2,X,X^2\rangle = \{V,V+YU\}$
we get that $V$ is in the image of $\phi_n^*$. This implies that all generators are in the image, and, up
to degree $4$ all are finite dimensional vector spaces, so an epimorphism is an isomorphism.
\end{proof}

\appendix
\section{Representations of maximal nilpotency class 2-groups}\label{sec:representations}
In this section we obtain an
explicit minimal degree faithful representation of maximal nilpotency
class $2$-groups on $\overline{U}(\F_2,n)$ which will allow us to compute
iterated Massey products of some degree
one elements.
The definition of the images of the generators of each group is
defined using the matrices described below:

Consider $A_n$ the $2^n\times 2^n$ matrix defined inductively:
\begin{equation}\label{defA}
A_0=(1) \quad , \quad A_n=\left(\begin{array}{c|c} A_{n-1}&  A_{n-1} \\ \hline
0&  A_{n-1} \end{array}
\right) ,
\end{equation}
where $0$ means a matrix with all the entries equal zero.

Consider also the $2^n \times 2^n$-matrix $B_n$ with entries $b_{i,j}$:
\begin{equation}\label{defB}
b_{i,j}\definicio \left\{\begin{array}{ll}
1&  \mbox{if $i=j$ or $j=i+1$,} \\ 0&  \mbox{otherwise.}
\end{array} \right.
\end{equation}
Finally consider the $2^n \times 2^n$-matrix $C_n$ with entries $c_{i,j}$:
\begin{equation}\label{defC}
c_{i,j}\definicio \left\{\begin{array}{ll}
1&  \mbox{if $i=j$,} \\ 1&  \mbox{if $i=1$ and $j=2^n$,} \\
0&  \mbox{otherwise.}
\end{array} \right.
\end{equation}
Most of the proofs will be done by induction, so we need to express matrices $B_n$ and
$C_n$ as a construction of $2^{n-1}\times2^{n-1}$ matrices: define the $2^n\times 2^n$
matrix $\Delta_n$ as the matrix with al entries $0$ but the the entry in position
$(1,2^n)$ which is $1$. Then:
\begin{equation}\label{defBC}
B_n=\left(\begin{array}{c|c} B_{n-1}&  \Delta_{n-1}^T \\ \hline
0&  B_{n-1} \end{array} \right)
\text{ and }
C_n=\left(\begin{array}{c|c} \Id&  \Delta_{n-1} \\ \hline
0&  \Id \end{array} \right) ,
\end{equation}
where $\Delta_{n-1}^T$ means the transpose of $\Delta_{n-1}$ and $\Id$ the identity matrix in
the corresponding rank.

The following calculation gives the property needed to compute the iterated Massey
products:
\begin{lemma}\label{lemmaorderB}
Let $B$ be a matrix in $\GL_{m}(\F_2)$ as defined in Equation \eqref{defB} (now
$m$ is not necessarily of the form $2^n$).
Then:
\begin{enumerate}[\rm (a)]
 \item The order of $B$ is $2^n$, where $2^{n-1}<m\leq 2^n$.
 \item If $l$ is a positive integer such that $2^l<\order(B)$, then
coefficients in the diagonal and
in positions $(i,i+2^l)$ in $B^{2^l}$ are equal to $1$ (for $i$
from $1$ to $m-2^l$) and $0$ in all other positions.
\end{enumerate}
\end{lemma}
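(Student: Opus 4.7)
The plan is to observe that $B = I + N$, where $N$ is the strictly upper triangular matrix with $1$'s on the first superdiagonal and zeros elsewhere. Then $N$ is nilpotent: a direct computation (or a trivial induction on $j$) shows that $N^j$ has a $1$ in position $(i, i+j)$ for every $1 \le i \le m-j$ and $0$ everywhere else. In particular $N^j = 0$ if and only if $j \ge m$.

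Working over $\F_2$, the ``Freshman's dream'' gives
\[
B^{2^l} = (I+N)^{2^l} = I + N^{2^l},
\]
because $I$ and $N$ commute and every intermediate binomial coefficient $\binom{2^l}{k}$ is even for $0 < k < 2^l$. Substituting the description of $N^{2^l}$ computed above immediately yields statement (b): provided $2^l < m$ (equivalently, $N^{2^l} \neq 0$), the matrix $B^{2^l}$ has $1$'s on the diagonal and in the positions $(i, i+2^l)$ with $1 \le i \le m - 2^l$, and zeros elsewhere.

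For statement (a), note that the order of the unipotent matrix $B$ is a power of $2$ (since $B - I$ is nilpotent and we are in characteristic $2$), so $\order(B) = 2^n$ where $n$ is the smallest nonnegative integer with $B^{2^n} = I$, i.e.\ with $N^{2^n} = 0$. By the characterization of $N^j$, this is the smallest $n$ with $2^n \ge m$, which is exactly the $n$ satisfying $2^{n-1} < m \le 2^n$.

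The only mild subtlety is linking (b) to the condition ``$2^l < \order(B)$'' rather than ``$2^l < m$''; but these are equivalent, because by the computation above $\order(B) = 2^n$ is the least power of $2$ that is $\ge m$, so $2^l < \order(B)$ forces $2^l < m$, ensuring $N^{2^l} \neq 0$ and hence the non-trivial off-diagonal pattern described in (b). No inductive argument on the block decomposition \eqref{defBC} is actually required; the whole proof reduces to the Freshman's dream plus the explicit powers of the shift matrix $N$.
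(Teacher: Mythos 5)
Your proposal is correct and follows essentially the same route as the paper: write $B=\Id+N$ with $N$ the superdiagonal shift matrix, describe $N^a$ explicitly, and apply the mod~$2$ binomial formula to get $B^{2^l}=\Id+N^{2^l}$, from which both (a) and (b) follow. Your added remark that $2^l<\order(B)$ is equivalent to $2^l<m$ is a slightly more explicit treatment of a point the paper leaves implicit, but the argument is the same.
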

\begin{proof}
$B$ is the sum of the identity $\Id$ and a nilpotent matrix $N$. Coefficients of $N^a$ are
equal to $0$ in positions $(i,k)$ for $k\neq i+a$, and equal to $1$ in positions $(i,i+a)$. 
In particular $N^a$ is not $0$ for $a<m$ and is $0$ for $a\geq m$. 
Now, as $\Id$ and $N$ commute, we can compute $(\Id+N)^l$ using the mod $2$ binomial formula. 
In particular, $B^{2^l}=(\Id+N)^{2^l}=\Id+N^{2^l}$. Use the description of $N^a$ 
described just before and we get (b). Moreover this tells us that $B^{2^n}=\Id$ and 
that $B^{2^{n-1}}\neq \Id$, where $n$ is the one considered in (a), and we get the desired result.
\end{proof}

\begin{lemma}\label{lemmaABC}
Fix $n\geq 1$. The matrices $A_n$, $B_n$ and $C_n$ defined in Equations
\eqref{defA}, \eqref{defB} and \eqref{defC} have the following
properties:
\begin{enumerate}[\rm (a)]
\item $A_n^2=\Id$,
\item $A_n\Delta_n^TA_n$ is a matrix with all entries equal to $1$,
\item $A_nB_nA_n$ has all the entries over the diagonal equal to $1$.
\item $(A_nB_n)^2=\Id$,
\item the first row of $A_nB_n$ is equal to the vector $(1 \, 0\, \cdots \, 0)$, and
\item $C_n$ is in the center of the invertible upper triangular matrices, so,
in particular, commute with $A_n$ and $B_n$.
\end{enumerate}
\end{lemma}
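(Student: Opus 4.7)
The strategy is to prove each of the six assertions by induction on $n$, using the block decompositions \eqref{defA} and \eqref{defBC} together with the mod~$2$ cancellation $1+1=0$. The computations are mechanical; the main thing to get right is the bookkeeping of the blocks in part~(c).

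Parts (a) and (b) are short inductions. For (a), the block formula gives $A_n^2 = \bigl(\begin{smallmatrix} A_{n-1}^2 & A_{n-1}^2+A_{n-1}^2 \\ 0 & A_{n-1}^2 \end{smallmatrix}\bigr)$, and the off-diagonal block vanishes in $\F_2$. For (b), a preparatory induction shows that the first row and the last column of $A_n$ both consist entirely of $1$'s (each property doubles under the block formula); since $\Delta_n^T$ has its unique nonzero entry at $(2^n,1)$, the product is rank one with $(A_n \Delta_n^T A_n)_{i,j}=(A_n)_{i,2^n}\cdot (A_n)_{1,j}=1$ for every $(i,j)$.

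The heart of the lemma is a combined treatment of (c) and (d), obtained by establishing the stronger statement $A_n B_n A_n = B_n^{-1}$. Here $B_n^{-1}=\sum_{k=0}^{2^n-1} N_n^k$ with $N_n\definicio B_n-\Id$ (geometric series, since $N_n^{2^n}=0$), which is precisely the upper triangular matrix with $1$'s on and above the diagonal. Expanding $A_nB_nA_n$ with the block formulas, and using $1+1=0$ to cancel two copies of $A_{n-1}B_{n-1}A_{n-1}$ in the upper-right block, one obtains
\begin{equation*}
A_n B_n A_n = \begin{pmatrix} A_{n-1}B_{n-1}A_{n-1} & A_{n-1}\Delta_{n-1}^T A_{n-1} \\ 0 & A_{n-1}B_{n-1}A_{n-1} \end{pmatrix}.
\end{equation*}
Part~(b) fills the upper-right block with $1$'s, the inductive hypothesis fills the diagonal blocks, and together they prove $A_nB_nA_n=B_n^{-1}$; part~(d) follows immediately since $(A_nB_n)^2=(A_nB_nA_n)B_n=B_n^{-1}B_n=\Id$.

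Parts (e) and (f) are one-line verifications. For (e), since the first row of $A_n$ is all $1$'s, $(A_nB_n)_{1,j}=\sum_k (B_n)_{k,j}$; column~$1$ of $B_n$ has a single~$1$ and every other column has exactly two, so the row reads $(1,0,\ldots,0)$ in $\F_2$. For (f), write $C_n=\Id+\Delta_n$ with $\Delta_n$ the matrix having its unique nonzero entry at $(1,2^n)$; for any $M\in U(\F_2,2^n)$, both $\Delta_n M$ and $M\Delta_n$ reduce to $\Delta_n$, because the $2^n$-th row and the first column of~$M$ coincide with those of the identity, so $C_nM=MC_n$. The only real obstacle in the whole argument is the block bookkeeping for~(c); everything else is a routine check.
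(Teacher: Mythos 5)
Your proof is correct and follows essentially the same route as the paper: induction on the block decompositions \eqref{defA} and \eqref{defBC} for (a)--(c), the factorization $(A_nB_n)^2=(A_nB_nA_n)B_n$ for (d), and the same direct row/column checks for (e) and (f). Your identification $A_nB_nA_n=B_n^{-1}$ is a pleasant repackaging of (c) that makes (d) immediate, and your (b) replaces the paper's induction by a direct rank-one computation, but the underlying arguments coincide.
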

\begin{proof}
\begin{enumerate}[(a)]
\item This can be seen by induction on $n$ and the fact that all coefficients are taken in $\F_2$: the
case $n=0$ follows directly, and assume the result is true up to $n-1$. Then:
$$
A_n^2=\left(\begin{array}{c|c} A_{n-1}&  A_{n-1} \\ \hline
0&  A_{n-1} \end{array}
\right)^2=
\left(\begin{array}{c|c} A_{n-1}^2&  2A_{n-1} \\ \hline
0&  A_{n-1}^2 \end{array} 
\right) = \Id ,
$$
where we are using that, as  we are working in $\F_2$, $2A_{n-1}=0$.
\item This can proved by induction, proceeding as in (a) and using that:
$$
\Delta_n^T=\left(\begin{array}{c|c} 0 &  0 \\ \hline
\Delta_{n-1}^T &  0 \end{array} 
\right) , 
$$
where, in this case, $0$ is the $2^{n-1}\times2^{n-1}$ matrix with all entries equal $0$.
\item Here we can also prove it by induction: the case $n=0$ is direct. Assume the result is
true up to $n-1$, then:
$$
A_nB_nA_n=\left(
\begin{array}{c|c}
A_{n-1}B_{n-1}A_{n-1} & A_{n-1}\Delta_{n-1}^TA_{n-1} \\ \hline 0 & A_{n-1}B_{n-1}A_{n-1}
\end{array}
\right) .
$$
We can use now the induction hypothesis and (b) to get the result.
\item We can write $(A_nB_n)^2=(A_nB_nA_n)B_n$ and use (c): 
the position $(i,j)$ of the result of this multiplication
is the sum of all entries in the column $j$ of $B_n$ from row $i$ on, so this is
$1$ in the diagonal, and $0$ in all other positions, either because we are adding $0$ or two $1$, which
is $0$ in $\F_2$.
\item Observe that the first row of $A_n$ has all coefficients equal $1$, so the first
row of $A_nB_n$ is, in each position, the sum of all the entries in the corresponding
column, so $1$ in the first position and $0$ in the other.
\item If $U$ is an invertible upper triangular $2^n\times 2^n$-matrix, the
matrix $U C_n$ has the same entries as $U$ but the last element of the first
row, which changes by $+1$. Multiplying $C_n U$ then we
have to add $1$ to the first element of the last column, getting the same
result.
\end{enumerate}
\end{proof}

Now we will use the matrices $A_n$, $B_n$ and $C_n$ to construct the
following $(2^n+1)\times(2^n+1)$
matrices:
$$
x_n\definicio
{\scriptscriptstyle
\left(\begin{array}{c|c}
1&  \begin{array}{cccc}  1&  0&   \cdots&  0 \end{array} \\
\hline
\begin{array}{c} 0 \\ \vdots \\ 0  \end{array}&  B_n
\end{array}\right)}
\, \mbox{,} \,
y_n\definicio
{\scriptscriptstyle \left(\begin{array}{c|c}
1&  \begin{array}{ccc} 0&  \cdots&  0 \end{array} \\
\hline
\begin{array}{c} 0 \\ \vdots \\ 0  \end{array}&  A_n
\end{array}\right)} ,
$$
$$
z_n\definicio
{\scriptscriptstyle
\left(\begin{array}{c|c|c}
1&  \begin{array}{ccc} 0&   \cdots&  0 \end{array}&
\begin{array}{ccccc} 0&   \cdots&  0&  1&  0 \end{array}\\
\hline
\begin{array}{c} 0 \\ \vdots \\ 0  \end{array}&  A_{n-1}&  A_{n-1}C_{n-1}\\
\hline
\begin{array}{c} 0 \\ \vdots \\ 0  \end{array}&  0&  A_{n-1}
\end{array}\right)}
\, \mbox{ and }
$$
$$
t_n\definicio
{\scriptscriptstyle
\left(\begin{array}{c|c|c}
1&  \begin{array}{ccc} 0&   \cdots&  0 \end{array}&
\begin{array}{ccc} 0&   \cdots&  0 \end{array}\\
\hline
\begin{array}{c} 0 \\ \vdots \\ 0  \end{array}&  A_{n-1}&  A_{n-1}C_{n-1}\\
\hline
\begin{array}{c} 0 \\ \vdots \\ 0  \end{array}&  0&  A_{n-1}
\end{array}\right)} .
$$
\begin{lemma}\label{representations}
The $(2^{n}+1)\times(2^n+1)$ matrices $x_n$, $y_n$, $z_n$ and $t_n$ defined
above have the following properties:
\begin{enumerate}[\rm (a)]
\item $x_n$ has order $2^{n+1}$,
\item $y_n$ has order $2$,
\item $y_nx_ny_n=x_n^{-1}$,
\item $z_n^2=x_n^{2^n}$, in particular $z_n$ has order $4$,
\item $z_nx_nz_n^{-1}=x_n^{-1}$,
\item $t_n$ has order $2$ and
\item $t_nx_nt_n^{-1}=x_n^{2^n-1}$.
\end{enumerate}
So $x_n$ and $y_n$ generate a subgroup isomorphic to $D_{2^{n+2}}$; $x_n$
and $z_n$ a subgroup isomorphic to $Q_{2^{n+2}}$, and $x_n$ and $t_n$ a subgroup
isomorphic to $SD_{2^{n+2}}$.
\end{lemma}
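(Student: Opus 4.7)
The plan is to verify the seven statements by direct block-matrix computation, relying on the identities established in Lemmas~\ref{lemmaorderB} and~\ref{lemmaABC} and on the fact that we are working over $\F_2$.

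For (a), I would write $x_n=\Id+N$ where $N$ is strictly upper triangular with $1$'s exactly on the first superdiagonal of a $(2^n+1)\times(2^n+1)$ matrix; powers of $N$ simply shift this diagonal, so $N^k=0$ iff $k\geq 2^n+1$. Since the characteristic is $2$, $x_n^{2^\ell}=\Id+N^{2^\ell}$, and therefore the order of $x_n$ is exactly $2^{n+1}$. The same computation shows that $x_n^{2^n}$ differs from $\Id$ only by a single $1$ in position $(1,2^n+1)$, an observation I reuse below. Statements (b) and (c) are then short block computations: (b) is immediate from Lemma~\ref{lemmaABC}(a), while (c) reduces, after bookkeeping of the top row, to $A_nB_nA_n=B_n^{-1}$, which follows from Lemma~\ref{lemmaABC}(d) and $A_n^{-1}=A_n$. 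For (f), expanding $t_n^2$ in its $3\times 3$ block form produces $\Id$ along the diagonal (by $A_{n-1}^2=\Id$) and $C_{n-1}+A_{n-1}C_{n-1}A_{n-1}$ off the diagonal, which vanishes in characteristic~$2$ because $C_{n-1}$ commutes with $A_{n-1}$ by Lemma~\ref{lemmaABC}(f).

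For (d), the same argument as in (f) reduces the lower $2^n\times 2^n$ portion of $z_n^2$ to $\Id$, and one then checks that the top row of $z_n^2$ produces precisely the entry $1$ in position $(1,2^n+1)$ identified above; the crux is to show that the last entry of the vector $v+vA_{n-1}$, where $v=(0,\ldots,0,1,0)$, equals the superdiagonal entry $(A_{n-1})_{2^{n-1}-1,\,2^{n-1}}$, which equals $1$ by induction on $n$ using the block recursion for $A_n$ in Equation~\eqref{defA}. For (e) and (g), conjugation of $x_n$ by $z_n$ (respectively $t_n$) restricts on the lower $2^n\times 2^n$ block to conjugation of $B_n$ by $Z_n:=\left(\begin{smallmatrix}A_{n-1}&A_{n-1}C_{n-1}\\ 0&A_{n-1}\end{smallmatrix}\right)$, and using Lemma~\ref{lemmaABC}(a),(c) together with the centrality of $C_{n-1}$ one obtains $Z_nB_nZ_n^{-1}=B_n^{-1}$. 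Tracking the top-row corrections then gives $x_n^{-1}$ in case (e) and $x_n^{-1}\cdot x_n^{2^n}=x_n^{2^n-1}$ in case (g), the difference being that $t_n$ lacks the $1$ that $z_n$ carries in position $(1,2^n+1)$.

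The main obstacle I anticipate is the bookkeeping in (d), (e) and (g): the precise effect of $A_{n-1}C_{n-1}$ and of the vector $v$ on the top row must be reconciled with an inductive description of the entries of $A_{n-1}$, and one has to be careful to identify which power of $x_n$ is produced in each case (distinguishing $x_n^{-1}$ from $x_n^{2^n-1}=x_n^{-1}\cdot x_n^{2^n}$ requires the sharpening obtained in (a)). Once the seven matrix identities are established, the final group-theoretic conclusions are immediate from the presentations in Equation~\eqref{notationG}: the relations witnessed by $(x_n,y_n)$, $(x_n,z_n)$ and $(x_n,t_n)$ coincide with the defining relations of $D_{2^{n+2}}$, $Q_{2^{n+2}}$ and $SD_{2^{n+2}}$ respectively, and since $x_n$ has order exactly $2^{n+1}$ the generated subgroups of $\overline{U}(\F_2,2^n+1)$ realize the full corresponding groups.
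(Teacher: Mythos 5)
Your proposal is correct and follows essentially the same route as the paper: direct block-matrix computation over $\F_2$ resting on Lemma~\ref{lemmaorderB} (for the order of $x_n$ and the shape of $x_n^{2^n}$) and on Lemma~\ref{lemmaABC} ($A_n^2=\Id$, $(A_nB_n)^2=\Id$, the first-row identities, and the centrality of $C_n$), including the same inductive check on the penultimate row of $A_{n-1}$ that is hidden in the paper's ``direct calculation'' of $z_n^2$. The only differences are cosmetic reformulations — the paper verifies $(y_nx_n)^2=\Id$, the commutation of $y_nz_n$ with $x_n$, and $(t_nx_n)^2=x_n^{2^n}$ instead of conjugating directly — plus one notational slip on your side: the subgroups live in $U(\F_2,2^n+1)$, not $\overline{U}(\F_2,2^n+1)$, since passing to that quotient would kill the central element $x_n^{2^n}$.
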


\begin{proof}
\begin{enumerate}[(a)]
\item The order of $x_n$ has been computed in Lemma \ref{lemmaorderB}.
\item This is a direct computation using Lemma \ref{lemmaABC} (a).
\item This is equivalent to show that $(y_n x_n)^2=\Id$. We have
$$
(y_nx_n)^2=
{\scriptscriptstyle
\left(\begin{array}{c|c}
1&   \begin{array}{cccc} 1  & 0 & \cdots & 0\end{array}\\
\hline
 0 &  A_nB_n
\end{array}\right)^2} =
{\scriptscriptstyle
\left(\begin{array}{c|c}
1&  0 \\
\hline
 0 &  (A_nB_n)^2
\end{array}\right)} = \Id ,
$$
where we have used Lemma \ref{lemmaABC} (e) in the second equality to get zeros in the first row, 
and Lemma \ref{lemmaABC} (c) gives us the last step.
\item The computation of $x_n^{2^n}$ has been done in Lemma \ref{lemmaorderB}:
$$
x_n^{2^n}=
{\scriptscriptstyle
\left(\begin{array}{c|c}
1&  \begin{array}{cccc} 0&  \cdots &  0 &  1 \end{array} \\
\hline
 0 &  \Id
\end{array}\right)} ,
$$
A direct calculation gives us the expression:
$$
z_n^2=
{\scriptscriptstyle
\left(
\begin{array}{c|c|c}
1 & 0 & \begin{array}{cccc} 0 & \cdots & 0 & 1 \end{array} \\ \hline
0 & A_{n-1}^2 & A_{n-1}^2C_{n-1}+A_{n-1}C_{n-1}A_{n-1} \\ \hline
0 & 0 & A_{n-1}^2
\end{array}
\right) .
}
$$
Apply now Lemma \ref{lemmaABC} (a) to obtain the identity in the diagonal and Lemma \ref{lemmaABC} (e) to
get $A_{n-1}^2C_{n-1}+A_{n-1}C_{n-1}A_{n-1} = C_{n-1}+C_{n-1}=0$.
\item Using (b) and (c) we have to prove that $z_nx_nz_n^{-1}=y_n^{-1}x_ny_n$, which is the same as prove
$(y_n z_n) x_n = x_n (y_n z_n)$. Direct computations show us the following results:
$$
y_nz_n=\left(\begin{array}{c|c|c}
    1 & 0 & \begin{array}{ccccc} 0 & \cdots & 0 & 1 & 0 \end{array} \\ \hline
    0 & \Id & \Delta_{n-1} \\ \hline
    0 & 0 & \Id  \end{array}
\right)
$$
where we have used that $C_{n-1}+\Id=\Delta_{n-1}$.

Observe now that, as $C_{n-1}$ and $\Id$ commute with $B_{n-1}$, so does $\Delta_{n-1}$, and use it to check:
$$
(y_n z_n) x_n = 
\left(\begin{array}{c|c|c}
    1 &  \begin{array}{cccc} 1 & 0 & \cdots & 0 \end{array} & \begin{array}{ccccc} 0 & \cdots & 0 & 1 & 1 \end{array} \\ \hline
    0 & B_{n-1} & \Delta_{n-1}^T + \Delta_{n-1}B_{n-1} \\ \hline
    0 & 0 & B_{n-1}  \end{array}
\right) = x_n (y_n z_n) .
$$
\item The computation of $t_n^2$ is analogue to the computation of $z_n^2$ with the first row as the identity
matrix, getting $t_n^2=\Id$.
\item The statement is equivalent to show that $(t_nx_n)^2=x^{2^n}$. Multiplying directly the matrices
we have:
$$
t_n x_n = 
\left(\begin{array}{c|c|c}
    1 & \begin{array}{cccc} 1 & 0 & \cdots & 0  \end{array}& 0 \\ \hline
    0 & A_{n-1}B_{n-1} & A_{n-1}\Delta_{n-1}^T +A_{n-1}C_{n-1}B_{n-1} \\ \hline
    0 & 0 & B_{n-1}  \end{array} \right)  .
$$
Now use almost the same computations as in the proof of (c) to show that $(t_nx_n)^2$ has 
the same entries as
$x_n^{2^n}$: we use that the first row of 
$A_{n-1}\Delta_{n-1}^T$ is $(1 \, 0 \, \cdots \, 0)$, and that $A_{n-1}C_{n-1}B_{n-1}=
A_{n-1}B_{n-1}C_{n-1}$, so by Lemma \ref{lemmaABC} (e) and the multiplication 
by $C_{n-1}$, the first row of $A_{n-1}C_{n-1}B_{n-1}$ is $(1 \, 0 \, \cdots \, 0 \, 1)$. So
the sum of these two vectors give the same entries as the first row of $x_n^{2^n}$. 
\end{enumerate}

\end{proof}

Now we want to check that the previous representations of the dihedral,
quaternion and semidihedral groups are of minimal rank.
To check this we will compute the exponent of a Sylow 2-subgroup of a general
linear group over $\F_2$.

\begin{lemma}\label{minima_m}
Let $M$ be an element of order $2^{n}$ in $\GL_m(\F_2)$. Then
$2^{n-1}<m$. Moreover, there
exists an element of order $2^{n}$ in the linear group $\GL_{2^{n-1}+1}(\F_2)$.
\end{lemma}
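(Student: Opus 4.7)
The plan is to translate the multiplicative order condition on $M$ into a nilpotency condition on $N \definicio M - \Id$, exploiting characteristic $2$ via the Frobenius identity.

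First I would observe that, since $M$ and $\Id$ commute and we are working over $\F_2$, iterating $(A+B)^2 = A^2 + B^2$ yields
$$
(M - \Id)^{2^k} \;=\; M^{2^k} - \Id
$$
for every $k \geq 0$. Consequently, $M$ has order $2^n$ if and only if $N^{2^n} = 0$ while $N^{2^{n-1}} \neq 0$. In particular, the nilpotency index of $N$ is at least $2^{n-1}+1$. Now I would invoke the standard linear-algebra fact (a consequence of Cayley--Hamilton, or of the strict decrease of the chain $\ker N \subsetneq \ker N^2 \subsetneq \cdots$ up to stabilization) that a nilpotent endomorphism of an $m$-dimensional $\F_2$-vector space has nilpotency index at most $m$. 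Combining both bounds gives $2^{n-1}+1 \leq m$, i.e.\ $2^{n-1} < m$, as claimed.

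For the moreover part, the natural candidate is the matrix $B$ of Equation \eqref{defB} taken of size $m = 2^{n-1}+1$. Since $2^{n-1} < 2^{n-1}+1 \leq 2^n$, Lemma \ref{lemmaorderB}(a) applies and gives $\order(B) = 2^n$. Hence $B \in \GL_{2^{n-1}+1}(\F_2)$ realizes the bound.

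There is no real obstacle: the only mildly delicate point is recognizing that the Frobenius identity in characteristic $2$ converts ``$M^{2^n} = \Id$ but $M^{2^{n-1}} \neq \Id$'' into a clean statement about nilpotency indices, after which both the lower bound and its sharpness follow immediately (the latter from the explicit matrix $B$ already introduced).
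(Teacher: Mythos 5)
Your proof is correct and follows essentially the same route as the paper: the paper phrases the key step via the minimal polynomial of $M$, which must be $(X-1)^r$ with $2^{n-1}<r\leq m$ because $X^{2^{k}}-1=(X-1)^{2^{k}}$ in characteristic $2$, and this is exactly your statement that the nilpotency index of $M-\Id$ exceeds $2^{n-1}$ yet is at most $m$. For the sharpness, the paper exhibits the matrix $x_{n-1}$ of Lemma \ref{representations}, which is precisely the matrix $B$ of size $2^{n-1}+1$ that you use, with its order computed in Lemma \ref{lemmaorderB}.
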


\begin{proof}
Let $M$ be an element of order $2^n$. Then $M$ satisfies the polynomial
$X^{2^n}-1$. As we are in
characteristic $2$ we get $X^{2^n}-1=(X-1)^{2^n}$. The minimal
polynomial of $M$ must be of the
form $(X-1)^r$, and the characteristic polynomial must be $(X-1)^m$,
with $r \leq m$. If
$2^{n-1} \geq r$, then $M$ would satisfy the polynomial
$(X-1)^{2^{n-1}}=(X^{2^{n-1}}-1)$, so the order
of $M$ would be at most $2^{n-1}$. So we get $2^{n-1}<r\leq m$.

Finally, the element $x_{n-1}$ in Lemma \ref{representations} is of
order $2^n$ in $\GL_{2^{n-1}+1}(\F_2)$.
\end{proof}

\section{Projective resolutions and the Yoneda cocomplex}
\subsection{Quaternion groups}
We can find a projective resolution of $Q_{2^n}$ in \cite[pp.
253]{Cartan-Eilenberg}, but before giving it
we need some notation. Recall from Equation \eqref{notationG} the
notation for the generators and cohomology of the quaternion groups:
$$
Q_{2^n}=\langle x , z \mid x^{2^{n-1}}=1, z^2=x^{2^{n-2}} ,
zxz^{-1}=x^{-1}\rangle .
$$
Consider the following elements in the group algebra $\F_2[Q_{2^n}]$:
\begin{equation}\label{eq:QIJKLN}
\begin{array}{l}
I=1+x, \,  J=1+z,  \, K=1+xz, \, L=1+x+x^2+ \cdots + x^{2^{n-2}-1}, \\
N_x=1+x+x^2+ \cdots + x^{2^{n-1}-1} \text{ and } N=\sum_{g\in Q_{2^n}} g.
\end{array}
\end{equation}
\begin{lemma}\label{lemmarelationsQIJKLN}
Consider the elements $I$, $J$, $K$, $L$, $N_x$ and $N$ defined above. Then we have the 
following relations:
$$
\begin{array}{l}
L=I^{2^{n-2}-1}, \, N_x=I^{2^{n-1}-1}, \, I^{2^{n-2}}=J^2=K^2, \, I^{2^{n-1}}=J^4=K^4=0,\\
KI=IJ, \, K=I+J+IJ \text{ and } N=JN_x=N_xJ=KN_x=N_xK.
\end{array}
$$
\end{lemma}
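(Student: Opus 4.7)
The plan is to derive each relation as a short calculation in $\F_2[Q_{2^n}]$, relying throughout on the Frobenius identity $(1+a)^{2^k}=1+a^{2^k}$, which holds in characteristic $2$ for any group algebra element, together with the defining relations $x^{2^{n-1}}=1$, $z^2=x^{2^{n-2}}$, and $zxz^{-1}=x^{-1}$.

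First I would establish the power-of-$I$ identities (1) and (2). Writing $2^{n-2}-1 = \sum_{k=0}^{n-3} 2^k$ in binary, Frobenius gives $I^{2^{n-2}-1}=\prod_{k=0}^{n-3}(1+x^{2^k})$. Expanding this product in $\F_2[Q_{2^n}]$ produces $\sum x^a$ as $a$ ranges over subset-sums of $\{1,2,4,\ldots,2^{n-3}\}$, which by uniqueness of binary expansion is exactly $\sum_{a=0}^{2^{n-2}-1} x^a = L$. The same argument with $2^{n-1}-1$ gives $N_x = I^{2^{n-1}-1}$.

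Next I would handle (3) and (4). Frobenius plus $z^2 = x^{2^{n-2}}$ yields $J^2 = 1 + z^2 = 1 + x^{2^{n-2}} = I^{2^{n-2}}$. For $K^2$, Frobenius gives $K^2 = 1 + (xz)^2$, and $(xz)^2 = x(zx)z = x(x^{-1}z)z = z^2 = x^{2^{n-2}}$ via the relation $zx=x^{-1}z$. Squaring once more, $J^4 = K^4 = 1 + x^{2^{n-1}} = 0 = I^{2^{n-1}}$, proving (4).

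Relations (5) and (6) are direct expansions. For (5), $(1+xz)(1+x) = 1+x+xz+xzx$, and $xzx = x(x^{-1}z) = z$, giving $KI = 1+x+z+xz = (1+x)(1+z) = IJ$. For (6), expand $I+J+IJ = (1+x)+(1+z)+(1+x+z+xz)$; in $\F_2$ all terms cancel except $xz$ and a single constant, leaving $1+xz=K$. Finally, for (7), I would use the coset decomposition $Q_{2^n} = \bigsqcup_{i=0}^{2^{n-1}-1}\{x^i,zx^i\}$, so $N=\sum_i x^i + \sum_i zx^i$. Expanding $JN_x = N_x + zN_x$ reproduces this sum on the nose. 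For $N_xJ$ and the two $K$-versions, I use the shift identity $N_xx = N_x$ (reindexing the cyclic sum of all powers of $x$) together with $x^iz = zx^{-i}$ (immediate from $zx=x^{-1}z$); each of $N_xz$, $xzN_x$, and $N_xxz$ then collapses to $\sum_i zx^i$, and combining with $N_x$ gives $N$. No step here is deep; the only mild subtlety is recognizing (1) and (2) as Frobenius-plus-binary-expansion identities rather than attempting to ``invert'' the zero-divisor $I$.
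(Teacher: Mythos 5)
Your proof is correct and follows essentially the same route as the paper: characteristic-$2$ binomial identities (your Frobenius-plus-binary-expansion argument is just an explicit derivation of the paper's stated identity $(1+u)^{2^m-1}=1+u+\cdots+u^{2^m-1}$) for the first row, direct expansion for $KI=IJ$ and $K=I+J+IJ$, and the unique normal form $x^i z^{\varepsilon}$ for the identities involving $N$. All the individual computations check out.
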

\begin{proof} Recall that, as we are taking coefficients in $\F_2$, $(1+u)^{2^m}=1+u^{2^m}$
and $(1+u)^{2^m-1}=1+u+ \cdots + u^{2^m-1}$. So using this and the order of the elements $x$, $z$ and $xz$ in
$Q_{2^n}$ we get all the equalities in the first row. The first to equalities of the second row can be computed directly.
The last equality can be deduced from the fact that all elements in $Q_{2^n}$ can be written as
$x^iz^\varepsilon$, for $0 \leq i \leq 2^{n-1}-1$ and $0\leq \varepsilon \leq 1$ in just one way, so all elements
appear as a summand in the products $JN_x$, $N_xJ$, $KN_x$ and $N_xK$.
\end{proof}
The following result can be found in \cite[pp. 253]{Cartan-Eilenberg} and we include it here to formulate it
using the elements in Equation \eqref{eq:QIJKLN}:
\begin{lemma}\label{lemmaprojresQ}
A projective resolution of $\F_2$ as $\F_2[Q_{2^n}]$-module is given
by the following periodic data:
$$
\xymatrix{
\F_2&
P_0 \ar[l]_{\varepsilon}&
P_1 \ar[l]_{\partial_1}&
P_2 \ar[l]_{\partial_2}&
P_3 \ar[l]_{\partial_3}&
P_4 \ar[l]_{\partial_4}&
P_5 \ar[l]_{\partial_5} \cdots
}
$$
where $P_{4i}\cong P_{4i+3} \cong \F_2[Q_{2^n}]$ and $P_{4i+1}\cong
P_{4i+2} \cong \F_2[Q_{2^n}]^2(=\F_2[Q_{2^n}]\oplus\F_2[Q_{2^n}])$
and the differentials: $\partial_{4i+1}=\left( I \quad J\right)$,
$\partial_{4i+2}=\left(\begin{smallmatrix} L&  K \\ J&  I
\end{smallmatrix}\right)$,
$\partial_{4i+3}=\left(\begin{smallmatrix} I \\ K \end{smallmatrix}\right)$ and
$\partial_{4i}=(N)$.
\end{lemma}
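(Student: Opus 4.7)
The plan is to verify that the displayed sequence of free $\F_2[Q_{2^n}]$-modules together with the given boundary maps forms a resolution of $\F_2$. Freeness is immediate, so the task reduces to checking two things: (i) $\partial^2 = 0$, and (ii) exactness.

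For (i) I would compute each composition of consecutive boundaries as a matrix product and reduce the entries via the relations in Lemma~\ref{lemmarelationsQIJKLN}. Concretely, $\varepsilon\partial_1 = (\varepsilon(I),\varepsilon(J)) = 0$; the composition $\partial_{4i+1}\partial_{4i+2}$ produces the entries $LI+J^2$ and $KI+IJ$, both of which vanish because $LI = I^{2^{n-2}} = J^2$ and $KI = IJ$; the composition $\partial_{4i+2}\partial_{4i+3}$ yields the same two identities; finally, $\partial_{4i+3}\partial_{4i+4}$ and $\partial_{4i}\partial_{4i+1}$ reduce to $NI = NJ = NK = 0$, which follow from the fact that multiplication by any group element of $Q_{2^n}$ leaves $N = \sum_{g \in Q_{2^n}} g$ fixed.

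For (ii) the decisive structural observation is that the data is $4$-periodic: the boundary $\partial_{4i+k}$ depends only on $k\in\{0,1,2,3\}$ and not on $i$. Consequently, exactness at $P_k$ for $k\geq 4$ follows formally from exactness in degrees $0,1,2,3$, and it suffices to check exactness there. Exactness at $P_0$ is immediate, since $\mathrm{im}(\partial_1)$ is the left ideal generated by $I$ and $J$, which is the augmentation ideal $\ker\varepsilon$. For $P_1$, $P_2$, $P_3$ I would argue by an $\F_2$-dimension count: using $\dim_{\F_2}\F_2[Q_{2^n}] = 2^n$ together with the explicit description of the left annihilator of $N$ (which is again the augmentation ideal), the ranks of kernels and images are forced to coincide; combined with the already-verified inclusion $\mathrm{im}\subseteq\ker$ from (i), this yields equality.

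The main obstacle is this exactness verification at $P_1, P_2, P_3$, where a direct element-chasing argument becomes intricate. The cleanest route is to cite \cite[pp.~253]{Cartan-Eilenberg}, where the resolution is constructed abstractly as a Tate-type periodic resolution exploiting that $Q_{2^n}$ has periodic cohomology of period dividing $4$; our statement then merely records this classical construction in the notation of Equation~\eqref{eq:QIJKLN}. Thus the bulk of the proof would consist of the chain-complex verification above, with exactness imported from the reference.
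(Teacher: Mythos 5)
Your proposal is correct and takes essentially the same route as the paper: the paper gives no proof of its own, simply recording the classical $4$-periodic resolution from \cite[pp.~253]{Cartan-Eilenberg} in the notation of Equation \eqref{eq:QIJKLN}, and your plan --- verify $\partial^2=0$ directly from the relations of Lemma \ref{lemmarelationsQIJKLN} and import exactness from that reference --- is precisely the intended justification. (The same $\partial^2=0$ bookkeeping is what the paper implicitly carries out when lifting the cochain representatives in Lemma \ref{lemma:genHQ}.)
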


Now we want to express the generators in the cohomology of quaternion groups as a cochain maps
in the Yoneda cocomplex. We refer the interested reader to \cite{Borge-Thesis} for the user of 
the Yoneda cocomplex in group cohomology.

Recall the cohomology of the Quaternion groups from Equation \eqref{notationHG}:
$$
\begin{array}{l}
H^*(BQ_8)\cong \F_2[X,Y,V]/(X^2+XY+Y^2,X^2Y+XY^2) \text{ and } \\
H^*(BQ_{2^n})\cong \F_2[X,Y,V]/(X^2+XY,Y^3)  \mbox{ for $n\geq 4$.}
\end{array}
$$
with $\deg(X)=\deg(Y)=1$ and $\deg(V)=4$.
\begin{lemma}\label{lemma:genHQ}
The generators $X$, $Y$ and $V$ in $H^*(BQ_{2^n})$ can be represented in the 
Yoneda cocomplex by the following cochain maps:
\begin{itemize}
\item The generator $X$ as the cochain map $X_i \colon P_{i+1} \to P_i$
defined as (to cover the case $Q_8$ here we are using the convention $I^0=1$):
$$
\begin{array}{l}
X_{4i}=(1\, 0),\,
X_{4i+1}=\left(\begin{smallmatrix} I^{2^{n-2}-2}&  1 \\ 0&  1+I
\end{smallmatrix}\right), \\
X_{4i+2}=\left(\begin{smallmatrix} 1 \\ 1 \end{smallmatrix}\right)
\text{ and }
X_{4i+3}=(I^{2^{n-1}-2}J).
\end{array}
$$
\item The element $Y$ in $H^1(BQ_{2^n})$
by a cochain map $Y_i \colon P_{i+1} \to P_i$
defined as:
$$
Y_{4i}=(0\quad 1), \,
Y_{4i+1}=\left(\begin{smallmatrix} 0&  1 \\ 1&  0\end{smallmatrix}\right), \,
Y_{4i+2}=\left(\begin{smallmatrix} 0 \\ 1 \end{smallmatrix}\right) \text{ and }
Y_{4i+3}=(N_x).
$$
\item Finally the element $V\in H^4(BQ_{2^n})$ by a cochain map defined by
$V_i \colon P_{i+4} \to P_i$ which is the identity.
\end{itemize}
\end{lemma}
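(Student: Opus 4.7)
The plan is to verify two things for each of the three cochain maps $X_\bullet$, $Y_\bullet$, $V_\bullet$: first, that they are genuine cocycles in the Yoneda cocomplex $\Hom^{(\bullet)}_{\F_2[Q_{2^n}]}(P_\bullet,P_\bullet)$ built from the resolution of Lemma \ref{lemmaprojresQ}; and second, that under the canonical isomorphism between the cohomology of this cocomplex and $H^*(BQ_{2^n})$ they represent the generators named $X$, $Y$, $V$ in Equation \eqref{notationHG}.

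For the cocycle conditions, I would check directly that each of the matrices $X_i$, $Y_i$, $V_i$ commutes with the appropriate differentials of the resolution, i.e.\ that the relevant squares with $\partial_{i+1}$ and $\partial_i$ commute. Because the resolution is $4$-periodic from degree $1$ onward (the differentials $\partial_{4i+k}$ depending only on $k\bmod 4$), it suffices to run the check for a few consecutive indices; in every case the identity to verify reduces to an algebraic relation among $I,J,K,L,N_x,N\in\F_2[Q_{2^n}]$, all collected in Lemma \ref{lemmarelationsQIJKLN}. For example, the square involving $\partial_2$ and $X_1$ boils down to the identities $L=I^{2^{n-2}-1}$ and $K=I+J+IJ$; the square involving $\partial_3$ and $X_2$ uses $K^2=I^{2^{n-2}}=J^2$ together with $KI=IJ$; and the checks for $Y$ and the $N_x$-entries of $X$ rely on $JN_x=N_xJ=KN_x=N_xK=N$. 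For $V_\bullet=\Id$ the cocycle property is automatic from the $4$-periodicity of $\partial_\bullet$.

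To identify the cohomology classes, I will use the standard description of the isomorphism with $H^*(BQ_{2^n})$. In degree $1$, a cocycle $\phi_0:P_1\to P_0$ corresponds to the group homomorphism $Q_{2^n}\to\F_2$ obtained by composing with the augmentation $\varepsilon$ and evaluating on the free generators $e_1,e_2$ of $P_1$; since $\partial_1(e_1)=I=1+x$ and $\partial_1(e_2)=J=1+z$, this homomorphism sends $x\mapsto\varepsilon\phi_0(e_1)$ and $z\mapsto\varepsilon\phi_0(e_2)$. Hence $X_0=(1,0)$ represents the homomorphism $x\mapsto 1$, $z\mapsto 0$, which is the element named $X$ in Equation \eqref{notationHG}, and analogously for $Y_0=(0,1)$ and $Y$. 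In degree $4$, a direct inspection of $\F_2[X,Y,V]/(X^2+XY,Y^3)$, and of the $Q_8$ algebra when $n=3$, shows that $H^4(BQ_{2^n})$ is one-dimensional, spanned by $V$; since $\varepsilon\circ V_0=\varepsilon\ne 0$, the identity cochain $V_\bullet$ represents a nonzero class, which must be $V$.

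The only genuine obstacle is the noncommutativity of $\F_2[Q_{2^n}]$: because $zx=x^{-1}z$, one has $IJ\ne JI$ in general, so one must be scrupulous about the order of factors when multiplying matrices with entries in $\F_2[Q_{2^n}]$, following the convention for morphisms of free left modules. Once this is kept straight, every verification reduces to an identity of Lemma \ref{lemmarelationsQIJKLN} and becomes purely mechanical.
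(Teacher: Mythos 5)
Your overall strategy coincides with the paper's: both verify that the displayed matrices define degree\nobreakdash-$1$ (resp.\ degree-$4$) cocycles in the Yoneda cocomplex by checking commutation with the differentials of the minimal resolution of Lemma \ref{lemmaprojresQ}, reducing every check to a relation from Lemma \ref{lemmarelationsQIJKLN}, and both identify $V$ by noting that $H^4$ is one-dimensional, that the resolution is $4$-periodic, and that the identity cochain is a lifting of a nonzero map $P_4\to\F_2$. That part of your argument is fine.

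There is, however, one genuine gap, in the step where you decide which degree-one cocycle is ``$X$'' and which is ``$Y$''. You identify $(1\ 0)$ with the homomorphism $x\mapsto 1$, $z\mapsto 0$ and then declare it to be ``the element named $X$ in Equation \eqref{notationHG}''. But Equation \eqref{notationHG} defines $X$ and $Y$ only through an abstract ring presentation; nowhere are they declared to be the duals of the group generators $x$ and $z$. For $n\geq 4$ the two classes are \emph{not} interchangeable: $Y$ is characterized inside $H^1$ as the unique nonzero class with $Y^3=0$, while $X^3=XY^2\neq 0$, and this asymmetry is exactly what Lemma \ref{MasseyQ} later exploits (the product $\langle Y,Y^2,Y,Y^2\rangle$ is only defined because $Y\cdot Y^2=0$). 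So the labelling cannot be read off from the augmentation-level data alone; you must compute the cup products of your two candidate cocycles by composing the cochain maps and verify that they satisfy the relations of \eqref{notationHG} --- concretely, that $\tilde X^2+\tilde X\tilde Y=0$ and $\tilde Y^3=0$ (and the relations $X^2+XY+Y^2=0$, $X^2Y+XY^2=0$ in the $Q_8$ case). This is precisely the extra step the paper's proof carries out, and without it your identification of $X$ and $Y$ (and hence everything downstream in Lemma \ref{MasseyQ}) is unsupported. The rest of your argument, including the caveat about the noncommutativity of $\F_2[Q_{2^n}]$, is sound.
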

\begin{proof}
We have that the projective resolution from Lemma \ref{lemmaprojresQ} is minimal, in the sense
that $P_i \cong \F_2[Q_{2^n}]^{\dim(H^i(Q_{2^n}))}$, so all $\F_2[Q_{2^n}]$-morphisms 
$E \colon P_1 \to \F_2$ must lift to maps $E_i$ to give the following commutative
diagram
$$
\xymatrix{
\F_2 &
P_0 \ar[l]_{\varepsilon} &
P_1 \ar[l]_{\partial_1} \ar@{.>}[d]_{E_0}  \ar[ld]_{E}& 
P_2 \ar[l]_{\partial_2}     \ar@{.>}[d]_{E_1} & 
P_3 \ar[l]_{\partial_3} \ar@{.>}[d]_{E_2} &
P_4 \ar[l]_{\partial_4} \ar@{.>}[d]_{E_3} & 
P_5 \ar[l]_{\partial_5} \ar@{.>}[d]_{E_4} \cdots \\
 & \F_2 & 
P_0 \ar[l]^{\varepsilon} & 
P_1 \ar[l]^{\partial_1} & 
P_2 \ar[l]^{\partial_2} & 
P_3 \ar[l]^{\partial_3} &
P_4 \ar[l]^{\partial_4}  \cdots 
}
$$
As $P_1=\F_2[Q_{2^n}]^2$ we can start looking for the lifting of
the $\F_2[Q_{2^n}]$-linear map $\tilde{X} \definicio (1 \quad 0)$ (respectively 
$\tilde{Y} \definicio (0 \quad 1)$). Observe that the map $X_j\colon P_{j+1} \to P_j$
(respectively $Y_j\colon P_{j+1} \to P_j$) defined in the statement is indeed 
a lifting for $\tilde X$ (respectively $\tilde Y$). This fact can be checked
multiplying the matrices $\partial_\bullet$ defined in Lemma \ref{lemmaprojresQ}
and $X_\bullet$ (respectively $Y_\bullet$) defined in the statement. To get the equality
of both sides one has to apply the relations given in Lemma \ref{lemmarelationsQIJKLN}. 

As elements in $H^*(Q_{2^n})$ we can compute the multiplication involving $\tilde X$ and
$\tilde Y$ composing the cochain maps. Doing that we will get that 
${\tilde X}^2+ \tilde X \tilde Y =0$ and ${\tilde Y}^3=0$, obtaining that $\tilde X$ (respectively $\tilde Y$)
can be took as 
a representative of $X$ in the Yoneda cocomplex (respectively $Y$).

Finally a non-zero element in $H^4(Q_{2^n})$ which will represent $V$ 
is a lifting of $(1) \colon P_4 \to \F_2$, and, as $P_{i+4} \cong P_{i}$ and the differential are also
periodic, we can take the identity.
\end{proof}

\subsection{Semidihedral groups}
In this subsection we will proceed in a similar way as the previous one. The main difference with
the quaternion family comes 
from the fact that, for semidihedral groups, there  there is not such a nice 
projective resolution.

Recall from Equation \eqref{notationG} a finite presentation for the semidihedral groups:
$$
SD_{2^n}=\langle x , t \mid x^{2^{n-1}}=1, t^2=1 ,
txt^{-1}=x^{2^{n-2}-1}\rangle .
$$
Consider the following elements in the group algebra $\F_2[SD_{2^n}]$:
\begin{equation} \label{eq:SDIJKLN}
\begin{array}{l}
I=1+x,\, J=1+t,\, L=1+x+x^2+\cdots+x^{2^{n-2}-2} \, \text{ and } \\
N_x=1+x+x^2+\cdots+x^{2^{n-1}-1}.
\end{array}
\end{equation}
The following equalities will be used in the computation of the generators in the Yoneda cocomplex
and their products.
\begin{lemma}\label{lemmarelationsSDIJKLN}
Consider the elements $I$, $J$, $L$ and $N_x$ defined above. Then we have the following relations:
\begin{enumerate}[\rm (a)]
\item $L=I^{2^{n-2}-1}+x^{2^{n-2}-1}$, $I^{2^{n-1}}=J^2=0$,
\item $(1+tL)I=IJ$, $(1+tL^n)I=I(1+tL^{n-1})$,
\item $tN_x=N_xt$, $tI^{2^{n-1}-2}=I^{2^{n-1}-2}t$,
\item $I^{2^{n-2}-2}t=tI^{2^{n-2}-2},$
\item $(1+tL^{2i})^2=0$, $(1+tL^{2i+1})^2=N_x$  and $L^{2^{n-1}}=1$.
\end{enumerate}
\end{lemma}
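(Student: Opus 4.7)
The plan is to derive all five families of identities from three elementary observations: (i) Frobenius in characteristic $2$ gives $(1+x)^{2^k} = 1 + x^{2^k}$, and more generally Lucas's theorem controls $I^m = (1+x)^m$; (ii) the orders $\order(x) = 2^{n-1}$ and $\order(t) = 2$; (iii) conjugation by $t$ acts as the ring automorphism $\sigma$ of $\F_2[\langle x \rangle]$ determined by $\sigma(x) = x^m$ with $m = 2^{n-2}-1$, so that $t\cdot f(x) = f(x^m)\cdot t$ for every polynomial $f$. A preliminary computation, using $m^2 = (2^{n-2}-1)^2 \equiv 1 \pmod{2^{n-1}}$ (valid since $n \geq 4$ gives $2n-4 \geq n-1$), yields $tx^m = xt$, which will be used throughout.

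First I would dispatch (a) and (b). For (a): Lucas's theorem gives $I^{2^{n-2}-1} = \sum_{k=0}^{2^{n-2}-1} x^k$, and subtracting the top term yields $L = I^{2^{n-2}-1} + x^{2^{n-2}-1}$; the identities $I^{2^{n-1}} = 1 + x^{2^{n-1}} = 0$ and $J^2 = 0$ are immediate from Frobenius. For (b) the key intermediate identity is the telescoping product $LI = L(1+x) = 1 + x^m$ in $\F_2[\langle x \rangle]$. Combined with $tx^m = xt$, this gives $tLI = t + xt = It$, and an induction on $k$ (exploiting that $L$ commutes with every power of $x$) produces $tL^k I = ItL^{k-1}$, which is exactly $(1+tL^k)I = I(1+tL^{k-1})$. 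The commutation assertions (c) and (d) then follow from invariance of the appropriate orbit sums under $\sigma$: since $\gcd(m, 2^{n-1}) = 1$, $\sigma$ permutes $\langle x \rangle$ and so fixes $N_x$; the sum $I^{2^{n-1}-2} = \sum_k x^{2k}$ ranges over all even residues modulo $2^{n-1}$, a set preserved by multiplication by the odd number $m$; an analogous orbit count handles the exponent appearing in (d).

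The heart of the proof is (e). Applying $\sigma$ to $LI = 1+x^m$ gives $\sigma(L)(1+x^m) = 1+x$, hence $\sigma(L)L\cdot I = I$, so $\sigma(L)L - 1$ lies in the annihilator of $I$ in $\F_2[\langle x \rangle]$, which is the one-dimensional ideal $\F_2\cdot N_x$. A direct check of a single coefficient then pins down $\sigma(L)L = 1 + N_x$. From $(tL^k)^2 = \sigma(L^k)L^k = (\sigma(L)L)^k$, combined with $N_x^2 = 2^{n-1}N_x = 0$ and the resulting $(1+N_x)^2 = 1$, a short computation yields $(1+tL^{2i+1})^2 = N_x$ and $(1+tL^{2i})^2 = 0$. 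Finally, $L^{2^{n-1}} = (I^{2^{n-2}-1})^{2^{n-1}} + (x^{2^{n-2}-1})^{2^{n-1}} = 0 + 1 = 1$ by Frobenius together with $I^{2^{n-1}} = 0$ and $\order(x) = 2^{n-1}$. I expect the main obstacle to be the noncommutative bookkeeping in (e), specifically nailing down the precise value $\sigma(L)L = 1+N_x$ rather than merely $\sigma(L)L \equiv 1 \pmod{N_x}$; everything else is Lucas's theorem plus the conjugation rule.
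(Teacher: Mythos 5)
Your overall strategy---Frobenius/Lucas for the powers of $I$, the telescoping identity $LI=1+x^{2^{n-2}-1}$ combined with $tx^{2^{n-2}-1}=xt$ for part (b), and the reduction of part (e) to computing $\sigma(L)L$ inside the commutative subring $\F_2[\langle x\rangle]$---is sound and agrees with the paper's proof wherever the paper gives details. Your treatment of (e), via the observation that $\sigma(L)L-1$ annihilates $I$ and therefore lies in the one-dimensional ideal $\F_2\cdot N_x$, is considerably more complete than the paper's one-line appeal to ``the commutation rules''; your reading of the second identity in (b) with a general exponent $k$ in place of the ambient $n$, and the derivation $tL^kI=tL^{k-1}(1+x^m)=t(1+x^m)L^{k-1}=ItL^{k-1}$, are correct.

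The genuine gap is part (d), which you dismiss with ``an analogous orbit count handles the exponent appearing in (d)''. That orbit count does not go through. On even exponents the conjugation $\sigma$ acts by $x^{2s}\mapsto x^{2s(2^{n-2}-1)}=x^{-2s}$, so while $\sigma$ does permute the full set of even residues modulo $2^{n-1}$ (which is why the second identity of (c) holds), it does not preserve the truncated exponent set $\{0,2,\dots,2^{n-2}-2\}$ of $I^{2^{n-2}-2}$: already $2\mapsto 2^{n-1}-2$, which is not in that set. Concretely, for $n=4$ one has $I^{2^{n-2}-2}=I^2=1+x^2$ and $tI^2t^{-1}=1+x^6\neq 1+x^2$ in $\F_2[SD_{16}]$, so $I^2t\neq tI^2$. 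Thus your argument for (d) fails, and in fact identity (d) as stated appears to be false; you should have flagged this rather than asserting it. (The paper's own half-sentence justification of (d) commits the same slip---it claims the exponent set is returned to itself when it is actually negated---so this is a point where the source itself needs repair, and one should check which commutation identity is actually required in Lemma \ref{lemma:genHSD} and prove that instead.)
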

\begin{proof}
\begin{enumerate}[(a)]
\item One proceeds as the proof of the first part of Lemma \ref{lemmarelationsQIJKLN}.
\item This can be done directly. As example we do the first equality:
$IJ=(1+x)(1+t)=1+x+t+xt=1+x+t+tx^{2^{n-2}-1}$ and $(1+tL)I=(1+t+tx+\cdots tx^{2^{n-2}-2})(1+x)=1+x+t+tx^{2^{n-2}-1}$, where
in the last equality we are using that when we expand the product most of the summands cancel in $\F_2$.
\item In the first equality, we use the commutation rules between $t$ and $x$ in the presentation of $SD_{2^n}$. 
In the second the fact that $I^{2^{n-1}-2}$ is the sum of all possible even powers of $x$ (including $x^0=1$) and the 
commutation rules between $t$ and $x$ preserve the exponent of $x$ modulo $2$.
\item The element $I^{2^{n-2}-2}$ is the sum of all even powers of $x$ up to degree $2^{n-2}-2$. The commutation
rule between $x$ and $t$ adds $2^{n-2}$ to the exponent of $x$ and changes the sign, so at the end we have the same
exponents.
\item The first two equalities use the equality, in characteristic $2$, $(1+u)^2=1+u^2$ in each case, and applies the commutation
rules between $x$ and $t$ to get the desired result. The last statement follows also from the same equality in $\F_2$:
$L^{2^{n-1}}=1+x^{2^{n-1}}+(x^2)^{2^{n-1}}+ \cdots +(x^{2^{n-2}})^{2^{n-1}}$ and as $(x^i)^{2^{n-1}}=1$ we get the
sum of an odd number of $1$, so $L^{2^{n-1}}=1$.
\end{enumerate}

\end{proof}

We can find a projective resolution for these family of groups in \cite{Wall}:
\begin{lemma}\label{lemmaprojresSD}
A projective resolution of $\F_2$ as $\F_2[SD_{2^n}]$-module is given by:
$$
\xymatrix{
\F_2&
P_0 \ar[l]_{\varepsilon}&
P_1 \ar[l]_{\partial_1}&
P_2 \ar[l]_{\partial_2}&
P_3 \ar[l]_{\partial_3}&
P_4 \ar[l]_{\partial_4}&
P_5 \ar[l]_{\partial_5} \cdots
}
$$
where $P_i\cong \F_2[SD_{2^n}]^{i+1}(=\F_2[SD_{2^n}]\oplus\stackrel{i+1}{\dots}\oplus\F_2[SD_{2^n}])$
and the differentials defined inductively: $\partial_{1}=\left( I
\quad J\right)$,
$$
\partial_{2i} = \left( \begin{array}{c|ccccc}
 N_x&  1+tL^i&  0&  \cdots&  0 \\
\hline
0&  \multicolumn{3}{|c}{\multirow{3}{*}{$\partial_{2i-1}$}} \\
\vdots&  \\
0&
\end{array} \right) \quad \text{and}
$$
$$
\partial_{2i+1} = \left( \begin{array}{c|cccccc}
 I&  1+tL^i&  i&  0&  \cdots&  0 \\
\hline
0&  \multicolumn{4}{|c}{\multirow{3}{*}{$\partial_{2i}$}} \\
\vdots&  \\
0&
\end{array} \right) \quad ,
$$
where $I$, $J$, $L$ and $N_x$ are the ones defined in Equation \eqref{eq:SDIJKLN}.
\end{lemma}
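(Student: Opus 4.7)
Since each $P_i \cong \F_2[SD_{2^n}]^{i+1}$ is free and therefore projective, the proof reduces to verifying (a) that the augmented complex satisfies $\partial_i \partial_{i+1} = 0$ for every $i \geq 0$ (with the convention $\partial_0 = \varepsilon$), and (b) that it is exact in every positive degree.

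For part (a), I would argue by induction on $i$, exploiting the block-triangular recursive form of the differentials. The composition $\partial_i \partial_{i+1}$ splits into four blocks: the lower-right block is exactly the previous composition $\partial_{i-1} \partial_i$ (zero by the inductive hypothesis); the lower-left block is visibly zero by shape; the corner entry reduces to $N_x I = I N_x = 0$ (a telescoping identity in $\F_2[\langle x\rangle]$, since $x N_x = N_x$) or to $J^2 = 0$ from Lemma \ref{lemmarelationsSDIJKLN}(a). The remaining upper-right interaction entries require the commutation $(1+tL^j) I = I (1+tL^{j-1})$ of Lemma \ref{lemmarelationsSDIJKLN}(b), the squaring identities $(1+tL^{2j})^2 = 0$ and $(1+tL^{2j+1})^2 = N_x$ of Lemma \ref{lemmarelationsSDIJKLN}(e), and the centrality of $N_x$ in $\F_2[SD_{2^n}]$ (which follows from $\gcd(2^{n-2}-1, 2^{n-1}) = 1$, so that the conjugation $t x^k t^{-1} = x^{k(2^{n-2}-1)}$ permutes the powers of $x$ and fixes $N_x$). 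The isolated scalar $i$ in the $(1,3)$ position of $\partial_{2i+1}$ contributes $i \cdot N_x$, which equals $N_x$ when $i$ is odd and $0$ when $i$ is even, exactly matching the parity of the squaring identity so that the two $N_x$-contributions cancel. The base case $\varepsilon \partial_1 = 0$ is immediate from $\varepsilon(I) = \varepsilon(J) = 0$.

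Part (b), exactness, is the main obstacle. Following Wall \cite{Wall}, I would realise the complex as the total complex of a twisted tensor product of the two standard periodic resolutions associated with the extension $1 \to \langle x \rangle \to SD_{2^n} \to \langle t \rangle \to 1$: the 2-periodic resolution of $\F_2$ over $\F_2[\langle x \rangle]$ with alternating differentials $I$ and $N_x$, and the resolution of $\F_2$ over $\F_2[\langle t \rangle]$ with differential $J$. The twisting is governed by the conjugation relation $t x t^{-1} = x^{2^{n-2}-1}$, which produces precisely the elements $1+tL^i$ and the scalars $i$ in the recursive formula: here $L$ encodes a partial orbit sum under the $t$-action on $\langle x \rangle$, and the exponent $i$ tracks the column index in the double complex. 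Since both factor complexes are exact as resolutions of $\F_2$ by free modules over the corresponding cyclic-group algebras, the total complex is exact over $\F_2[SD_{2^n}]$ by the standard spectral sequence argument for a double complex with exact rows (or columns). The substantive technical step is matching the abstract total-complex differential with the explicit recursive formulas in the statement, which is essentially the bookkeeping content of Wall's original construction, and which is carried out by tracking how the twisting propagates across successive columns using the same identities from Lemma \ref{lemmarelationsSDIJKLN} as in part (a).
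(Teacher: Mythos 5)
Your proposal takes a genuinely different route from the paper, which in fact offers no proof at all: Lemma \ref{lemmaprojresSD} is simply quoted from Wall's paper on resolutions for extensions of groups, the elements of Equation \eqref{eq:SDIJKLN} being introduced only so that the matrices can be written down. Your part (a) is correct and is a genuine addition. Writing $\partial_{2i}=\left(\begin{smallmatrix} N_x & r\\ 0 & \partial_{2i-1}\end{smallmatrix}\right)$ and $\partial_{2i+1}=\left(\begin{smallmatrix} I & s\\ 0 & \partial_{2i}\end{smallmatrix}\right)$, the composite in either order has corner block $N_xI=IN_x=0$, lower block $\partial_{i-1}\partial_i=0$ by induction, and an interaction row whose vanishing is exactly Lemma \ref{lemmarelationsSDIJKLN}(b), (c) and (e); in particular your observation that the scalar $i$ contributes $i\cdot N_x$, cancelling $(1+tL^i)^2$ according to the parity of $i$, is precisely the reason that otherwise mysterious entry must appear, and it checks out. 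For part (b), identifying the complex as Wall's twisted tensor product of the two periodic resolutions attached to $1\to\langle x\rangle\to SD_{2^n}\to\langle t\rangle\to 1$ is the right mechanism, but note it is not literally a double complex: the scalar $i$ occupies a position that drops the base degree by two, so the total differential has higher-order components and exactness is the spectral sequence of the associated \emph{filtered} complex (whose $E_0$-term is the tensor product of the two periodic resolutions) --- which is exactly Wall's general acyclicity theorem. As written, your part (b) therefore still defers to \cite{Wall}, the same dependence the paper has; what your route buys is a self-contained verification that the displayed matrices actually form a complex, which the paper never carries out, at the cost of the bookkeeping needed to match Wall's abstract construction with these explicit formulas.
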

Now we need the cohomology of $SD_{2^n}$, which has been introduced in Equation \eqref{notationHG}:
$$
H^*(BSD_{2^n}) \cong \F_2[X,Y,U,V]/(X^2\!+\!XY,XU,X^3,U^2+(X^2+Y^2)V) , 
$$
with $\deg(X)=\deg(Y)=1$, $\deg(U)=3$ and $\deg(V)=4$.

We proceed now giving cochain maps representing the generators. This free resolutions does
not allow us to give inductive formulas, but as we are just interested in $4$-fold iterated Massey
products of elements in degrees $1$ and $2$ we just need to give the first $4$ steps of the
maps.

Before giving the element we fix the following basis of this cohomology as a graded vector space in
low degrees: $\{X,Y\}$ in degree one, $\{X^2,Y^2\}$ in degree two, $\{U,Y^3\}$ in degree three and
$\{Y^4,YU,V\}$ in degree four.

\begin{lemma}\label{lemma:genHSD}
The representing cochain maps in the Yoneda cocomplex of the generators $X$, $Y$, $U$ and $V$ in $H^*(BSD_{2^n})$
are characterized by the following data:
\begin{itemize}
\item The element $X \in H^1(BSD_{2^n})$ can be taken
as the a cochain morphism $X_i\colon P_{i+1} \to P_{i}$ with
$$X_0=(1\quad 0), \quad X_1=\left( \begin{array}{c|cc} I^{2^{n-1}-2}&
t(L+I^{2^{n-2}-2})&  0 \\ \hline
                              0&  \multicolumn{2}{|c}{X_0}
                             \end{array}\right) ,
$$
$$
X_2=\left( \begin{array}{c|ccc} 1&  t(L+I^{2^{n-2}-2})&  1&  0 \\
          \hline 0&  \multicolumn{3}{|c}{\multirow{2}{*}{$X_1$}} \\
          0&
          \end{array}\right) \quad \text{and}
$$
$$ X_3=\left( \begin{array}{c|cccc} I^{2^{n-1}-2}&  0&  0&  0&
0 \\
          \hline 0&  \multicolumn{4}{|c}{\multirow{3}{*}{$X_2$}} \\
          0&   \\
          0&
          \end{array}\right) .
$$
\item The element $Y\in H^1(BSD_{2^n})$ can be taken as a cochain map $Y_i\colon P_{i+1} \to P_{i}$
with $Y_i=(0 | \Id_{i+1})$ where $\Id_{i+1}$ is the $(i+1)\times(i+1)$ identity matrix.
\item Either $U$ or $U+Y^3$ is represented by a lifting of the map from $P^3$ to $\F_2$
given by the matrix $(1\,0\,0\,0)$.
\item Either $V$ or $YU$ can be taken as a lifting of the map from $P^4$ to $\F_2$
given by the matrix $(1\,0\,0\,0\,0)$.
\end{itemize}
\end{lemma}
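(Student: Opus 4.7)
The plan is to prove the lemma in two stages: first verifying that the proposed matrices $X_i$ and $Y_i$ are cochain morphisms of the resolution from Lemma \ref{lemmaprojresSD}, and second identifying which cohomology classes they represent. For $Y_i = (0 \mid \Id_{i+1})$, which is projection onto the last $i+1$ components of $P_{i+1}$, the recursive block structure of $\partial_j$---with $\partial_{j-1}$ sitting in the lower-right block---makes the equality $\partial_i Y_i = Y_{i-1} \partial_{i+1}$ immediate by inspection. For $X_i$ I would check $\partial_i X_i = X_{i-1} \partial_{i+1}$ for $i = 1, 2, 3$ directly by block-matrix multiplication; the identities in $\F_2[SD_{2^n}]$ that this reduces to all appear in Lemma \ref{lemmarelationsSDIJKLN}, the crucial ones being $(1+tL)I = IJ$ together with the commutations $tI^{2^{n-2}-2} = I^{2^{n-2}-2}t$ and $tI^{2^{n-1}-2} = I^{2^{n-1}-2}t$.

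For identifying the degree-one classes, the cocycles $\varepsilon \circ X_0 = (1,0)$ and $\varepsilon \circ Y_0 = (0,1)$ are clearly linearly independent in $\Hom_{\F_2[G]}(P_1, \F_2) \cong H^1(G;\F_2)$ and hence span this two-dimensional space. To pin down the labelling with the named generators in \eqref{notationHG}, I would compute the compositions of the cochain maps and confirm the asymmetric relations $X^2 = XY$, $X^3 = 0$ while $Y^3 \neq 0$: this forces the identification of the two cochain representatives with the correct names.

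For the degree-three and degree-four claims, the key observation is that iterating $Y_i$ gives $\varepsilon \circ Y_0 \circ \cdots \circ Y_{k-1}$ equal to the projection $(0, \ldots, 0, 1) \colon P_k \to \F_2$ onto the last coordinate. Hence any lift of the first-coordinate projection $(1,0,0,0) \colon P_3 \to \F_2$ represents a nonzero class whose image modulo $\langle Y^3 \rangle$ is nonzero, and since $H^3(BSD_{2^n})$ has basis $\{U, Y^3\}$, this lift must be either $U$ or $U + Y^3$. The degree-four claim is analogous: computing the Yoneda projections of $Y^4$ and $YU$ in the basis $\{Y^4, YU, V\}$ of $H^4(BSD_{2^n})$ identifies any lift of $(1,0,0,0,0)$ as either $V$ or $YU$.

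The main obstacle is the matrix verification for $X_2$ and $X_3$, where the off-diagonal entries $t(L + I^{2^{n-2}-2})$ must interact correctly with the first rows of $\partial_3$ and $\partial_4$. This is routine but requires careful bookkeeping using the full catalogue of identities in Lemma \ref{lemmarelationsSDIJKLN}, in particular the decomposition $L = I^{2^{n-2}-1} + x^{2^{n-2}-1}$ combined with the commutation relations between $t$ and powers of $I$.
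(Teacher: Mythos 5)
Your overall strategy is the same as the paper's: check the chain--map identities $\partial_i\circ X_i=X_{i-1}\circ\partial_{i+1}$ by block multiplication using Lemma \ref{lemmarelationsSDIJKLN}, use the minimality of the resolution in degrees $0$ and $1$ to see that $(1\,0)$ and $(0\,1)$ both lift, separate $X$ from $Y$ by the ring relations (the paper uses $X^3=0$), observe that $Y^k$ is the last--coordinate projection, and finish by a dimension count. The first two bullets of the lemma are handled correctly and exactly as in the paper.

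The ``Hence'' in your degree-three paragraph, however, covers a genuine gap. The resolution of Lemma \ref{lemmaprojresSD} is \emph{not} minimal from degree $2$ on (e.g.\ $P_3\cong\F_2[SD_{2^n}]^4$ while $\dim H^3(BSD_{2^n})=2$), so a coordinate functional $P_3\to\F_2$ need not be a cocycle, and one that is may be a coboundary or cohomologous to $Y^3$. Knowing only that $Y^3$ is represented by $(0\,0\,0\,1)$ does not exclude that $(1\,0\,0\,0)$ represents $0$ or $Y^3$; your claim that its class is ``nonzero and nonzero modulo $\langle Y^3\rangle$'' is exactly what has to be proved. The paper closes this by classifying the remaining coordinate functionals: $(0\,1\,0\,0)$ is not a cocycle (its composite with $\partial_4$ does not vanish after augmentation), and $(0\,0\,1\,0)$ is a coboundary, being the image of the non-cocycle $(1\,0\,0)$ on $P_2$ and representing $X^3=X^2Y=XY^2=0$. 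Only then does the count $\dim Z^3=3$, $\dim B^3=1$, $\dim H^3=2$ force the class of $(1\,0\,0\,0)$ to be independent of $Y^3$, hence $U$ or $U+Y^3$. The same elimination ($(0\,0\,1\,0\,0)$ not liftable, $(0\,0\,0\,1\,0)$ a coboundary, $(0\,1\,0\,0\,0)=YU$, $(0\,0\,0\,0\,1)=Y^4$) is needed in degree four; note also that the intended conclusion there is ``$V$ or $V+YU$'' (as used in Lemma \ref{MasseySD}), and even after the elimination the count alone only determines the class of $(1\,0\,0\,0\,0)$ modulo $\langle Y^4,YU\rangle$, so a further small check is required to rule out a $Y^4$-component. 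Without these steps the last two bullets are asserted rather than proved.
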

\begin{proof}
Here we give all the arguments which are used in the proof, but, for the sake of clarity, avoiding explicit computations. 
The computations we avoid can be checked directly using the relations in
Lemma \ref{lemmarelationsSDIJKLN}.

The elements in $H^1(BSD_{2^n})$ are determined by $\F_2[Q_{2^n}]$-morphisms $P_1 \to \F_2$. As
$P_1=\F_2[SD_{2^n}]^2=\dim(H^1(BSD_{2^n})$ both morphisms given by matrices $(1 \, 0)$ and $(0\, 1)$
must lift. The statement in the Lemma tells us which is one of the lifting for each one ($X_i$ and $Y_i$ respectively). 
Now we can decide which one is $X$ and $Y$ in $H^*(BSD_{2^n})$ checking, for example, the relation $X^3=0$.

The elements in $H^3(BSD_{2^n})$ are determined by  $\F_2[Q_{2^n}]$-morphisms $P_3 \to \F_2$
which can be lifted
to cochain maps, and we can write them as matrices $1 \times 4$. Using
the previous
representatives $Y^3$ is determined by $(0\,0\,0\,1)$, $X^3=X^2Y=XY^2$
(which is a coboundary) is
represented by $(0\,0\,1\,0)$. Observe that $(0\,1\,0\,0)$ cannot be lifted, so it does not represent any
element in $H^3(BSD_{2^n})$. So it remains $(1\,0\,0\,0)$, which
can be lifted and, taking into account the dimension of the cohomology in degree $3$,
it can be considered as a representative for $U$ or $U+Y^3$.

The same arguments work for detecting which can be $V$ as a $\F_2[Q_{2^n}]$-morphism from
$P_4 \to \F_2$:
$(0\,0\,0\,0\,1)$ determines $Y^4$, $(0\,0\,0\,1\,0)$ is a coboundary,
$(0\,0\,1\,0\,0)$ cannot be lifted,
$(0\,1\,0\,0\,0)$ determines $YU$, so $V$ (or $V+YU$) can be taken as $(1\,0\,0\,0\,0)$.
\end{proof}

%%%%%%%%%%%%%%%%%%%%%%%%%%%%%%%%%%
% Bibliography
%%%%%%%%%%%%%%%%%%%%%%%%%%%%%%%%%%
\bibliographystyle{alpha}
\bibliography{biblio} 

\begin{thebibliography}{Dwy75}

\bibitem[Bag92]{Baginski-maxclass2}
Czes{\l}aw Bagi{\'n}ski.
\newblock Modular group algebras of {$2$}-groups of maximal class.
\newblock {\em Comm. Algebra}, 20(5):1229--1241, 1992.

\bibitem[Ben92]{Benson-BCAT90}
D.~J. Benson.
\newblock Resolutions and {P}oincar\'e duality for finite groups.
\newblock In {\em Algebraic topology ({S}an {F}eliu de {G}u\'\i xols, 1990)},
  volume 1509 of {\em Lecture Notes in Math.}, pages 10--19. Springer, Berlin,
  1992.

\bibitem[BK72]{BK}
A.~K. Bousfield and D.~M. Kan.
\newblock {\em Homotopy limits, completions and localizations}.
\newblock Lecture Notes in Mathematics, Vol. 304. Springer-Verlag, Berlin,
  1972.

\bibitem[BK07]{baginski-kolanov}
Czes{\l}aw Bagi{\'n}ski and Alexander Konovalov.
\newblock The modular isomorphism problem for finite {$p$}-groups with a cyclic
  subgroup of index {$p^2$}.
\newblock In {\em Groups {S}t. {A}ndrews 2005. {V}ol. 1}, volume 339 of {\em
  London Math. Soc. Lecture Note Ser.}, pages 186--193. Cambridge Univ. Press,
  Cambridge, 2007.

\bibitem[BL97]{BL}
Carlos Broto and Ran Levi.
\newblock On the homotopy type of {$BG$} for certain finite {$2$}-groups {$G$}.
\newblock {\em Trans. Amer. Math. Soc.}, 349(4):1487--1502, 1997.

\bibitem[BL02]{BL2}
Carles Broto and Ran Levi.
\newblock On spaces of self-homotopy equivalences of {$p$}-completed
  classifying spaces of finite groups and homotopy group extensions.
\newblock {\em Topology}, 41(2):229--255, 2002.

\bibitem[Bor01]{Borge-Thesis}
Inger~Christin Borge.
\newblock {\em A cohomological approach to the classification of $p$-groups}.
\newblock PhD thesis, Oxford University, 2001.

\bibitem[Bor04]{Borge-JPAA}
Inger~Christin Borge.
\newblock A cohomological approach to the modular isomorphism problem.
\newblock {\em J. Pure Appl. Algebra}, 189(1-3):7--25, 2004.

\bibitem[Car77]{Carlson}
Jon~F. Carlson.
\newblock Periodic modules over modular group algebras.
\newblock {\em J. London Math. Soc. (2)}, 15(3):431--436, 1977.

\bibitem[CE56]{Cartan-Eilenberg}
Henri Cartan and Samuel Eilenberg.
\newblock {\em Homological algebra}.
\newblock Princeton University Press, Princeton, N. J., 1956.

\bibitem[Dwy75]{Dwyer2}
William~G. Dwyer.
\newblock Homology, {M}assey products and maps between groups.
\newblock {\em J. Pure Appl. Algebra}, 6(2):177--190, 1975.

\bibitem[Kra66]{Kraines}
David Kraines.
\newblock Massey higher products.
\newblock {\em Trans. Amer. Math. Soc.}, 124:431--449, 1966.

\bibitem[May69]{May}
J.~Peter May.
\newblock Matric {M}assey products.
\newblock {\em J. Algebra}, 12:533--568, 1969.

\bibitem[Wal61]{Wall}
C.~T.~C. Wall.
\newblock Resolutions for extensions of groups.
\newblock {\em Proc. Cambridge Philos. Soc.}, 57:251--255, 1961.

\end{thebibliography}

\end{document}